\newtheorem{theorem}{Theorem}[section]
\newtheorem{lemma}[theorem]{Lemma}
\newtheorem{proposition}[theorem]{Proposition}
\theoremstyle{definition}
\newtheorem{remark}[theorem]{Remark}
\numberwithin{equation}{section}
\acrodef{KPZ}{Kardar--Parisi--Zhang}
\acrodef{SHE}{Stochastic Heat Equation}
\acrodef{LDP}{Large Deviation Principle}
\renewcommand{\P}{\mathbb{P}}	% probability
\newcommand{\Ex}{\mathbb{E}}	% expectation
\renewcommand{\d}{\mathrm{d}}	% differential
\newcommand{\ind}{\mathbf{1}}	% indicator
\newcommand{\tr}{\mathrm{tr}} 	% trace
\newcommand{\rl}{\operatorname{Re}}
\newcommand{\g}{\rho}
\newcommand{\C}{\mathfrak{C}}
\newcommand{\norm}[1]{\Vert#1\Vert}
\newcommand{\Con}{\mathrm{C}} % constant
\newcommand{\R}{\mathbb{R}} % real numbers
\newcommand{\Z}{\mathbb{Z}} % integers
\renewcommand{\i}{\mathbf{i}}
\newcommand{\e}{\varepsilon}
\newcommand{\im}{\operatorname{Im}}
\newcommand{\hq}{h_q}
\newcommand{\bq}{B_q}
\newcommand{\fq}{F_q}
\newcommand{\calA}{\mathcal{A}}
\newcommand{\calB}{\mathcal{B}}
\newcommand{\calH}{\mathcal{H}}
\newcommand{\calR}{\mathcal{R}}
\newcommand{\til}{\widetilde}
\title[Upper-Tail LDP for ASEP]{Upper-tail Large Deviation Principle for the ASEP}
\author[S.\ Das]{Sayan Das}
\address{S.\ Das,
	Department of Mathematics, Columbia University,
	\newline\hphantom{\quad \ \ S. Das}
	2990 Broadway, New York, NY 10027 USA
}
\email{sayan.das@columbia.edu}
\author[W.\ Zhu]{Weitao Zhu}
\address{W.\ Zhu,
	Department of Mathematics, Columbia University,
	\newline\hphantom{\quad \ \ S. Das}
	2990 Broadway, New York, NY 10027 USA
}
\email{weitao.zhu@columbia.edu}
\subjclass[2010]{%
	Primary 60F10,		%large deviations
	Secondary 82C22.  	%Interacting particle systems in time-dependent statistical mechanics
}
\keywords{%
	ASEP, Lyapunov exponents, large deviations, Fredholm determinants.
}
\begin{document}
	\begin{abstract} We consider the asymmetric simple exclusion process (ASEP) on $\Z$ started from step initial data and obtain the exact Lyapunov exponents for $H_0(t)$, the integrated current of ASEP. As a corollary, we derive an explicit formula for the upper-tail large deviation rate function for $-H_0(t)$. Our result matches with the rate function for the integrated current of the totally asymmetric simple exclusion process (TASEP) obtained in \cite{joh}.	
	\end{abstract}
	\maketitle
	
	\section{Introduction}
\subsection{The ASEP and main results} In this paper, we study the upper-tail Large Deviation Principle (LDP) of the \textit{asymmetric simple exclusion process} (ASEP) with step initial data. The ASEP is a continuous-time Markov chain on particle configurations $\textbf{x} = (\textbf{x}_1 > \textbf{x}_2 > \cdots)$ in $\Z$.  The process can be described as follows. Each site $i\in \Z$ can be occupied by at most one particle, which has an independent exponential clock with exponential waiting time of mean $1$. When the clock rings, the particle jumps to the right with probability $q$ or to the left with probability $p=1-q$. However, the jump is only permissible when the target site is unoccupied. For our purposes, it suffices to consider configurations with a rightmost particle. At any time $t \in \R_{>0}$, the process has the configuration  $x(t)=(x_1(t)>x_2(t)>\cdots)$ in $\Z$, where $x_j(t)$ denotes the location of the $j$-th rightmost particle at this time. {Appearing first in the biology work of Macdonald, Gibbs, and Pipkin \cite{pip} and introduced  to the mathematics community two years later by \cite{spitzer},} the ASEP has since become the ``default stochastic model to study transport phenomena", including mass transport, traffic flow, queueing behavior, driven lattices and turbulence. We refer to \cite{bcs,liggett,liggett2,spohn} for the mathematical study of and related to the ASEP. 

When $q=1,$ we obtain the \textit{totally asymmetric simple exclusion process} (TASEP), which allows jumps only to the right. It connects to several other physical systems such as the exponential last-passage percolation, zero-temperature directed polymer in a random environment, the corner growth process and is known to possess complete determinantal structure (\textit{free-fermionicity}). We refer the readers to \cite{joh,liggett,liggett2,pra} and the references therein for more thorough treatises of the TASEP.

The dynamics of ASEP are uniquely determined once we specify its initial state. In the present paper, we restrict our attention to the ASEP  started from the \textit{step} initial configuration, i.e. $x_j(0)=-j$, $j=1,2,\ldots$. We set $\gamma = q-p$ and assume $q>\frac12$, i.e., ASEP has a drift to the right. An observable of interest in ASEP is $H_0(t)$, the integrated current through 0 which is defined as:
\begin{align}\label{def:ht}
	H_0(t) := \mbox{ the number of particles to the right of zero at time }t.
\end{align}
$H_0(t)$ can also be interpreted as the one-dimensional height function of the interface growth of the ASEP and thus carries significance in the broader context of the Kardar-Parisi-Zhang (KPZ) universality class. We will elaborate on the connection to KPZ universality class later in Section \ref{sec:pre}. As a well-known random growth model itself, the large-time behaviors of ASEP with step initial conditions have been well-studied.  Indeed, it is known \cite[Chapter VIII, Theorem 5.12]{liggett} that the current satisfies the following strong law of large numbers:
\begin{align*}
\tfrac1t{H_0\big(\tfrac{t}\gamma\big)} \rightarrow \tfrac{1}{4}, \mbox{ almost surely as } t\to \infty.
\end{align*}

The strong law has been later complemented by fluctuation results in the seminal works by Tracy and Widom. In a series of papers \cite{tw3}, \cite{tw1} \cite{tw2}, Tracy and Widom exploit the integrability of ASEP with step initial data and establish via contour analysis that $H_0(t)$ when centered by $\frac{t}{4}$ has typical deviations of the order $t^{1/3}$ and has the following asymptotic fluctuations:
\begin{align}\label{eq:clt2}
{\tfrac{1}{t^{1/3}}2^{4/3}\big(-H_0\big(\tfrac{t}\gamma\big) + \tfrac{t}{4}\big) \implies   \xi_{\operatorname{GUE}},}
\end{align}
where $\xi_{\operatorname{GUE}}$ is the GUE Tracy-Widom distribution
  \cite{tw4}. When $q = 1$, \eqref{eq:clt2} recovers the same result on  TASEP, which has been proved earlier by \cite{joh}.

Given the existing fluctuation results on the ASEP with step initial data, it is natural to inquire into its {Large Deviation Principle (LDP)}. Namely, we seek to find the probability of when the event $-H_0(\frac{t}{\gamma})+\frac{t}{4}$ has deviations of order $t$. Intriguingly, one expects the lower- and upper-tail LDPs to have different speeds: the upper-tail deviation is expected to occur at speed $t$ whereas the lower-tail has speed $t^2$:
\begin{align*}\tag{Lower Tail}
{\mathbb{P}\left(-H_0\big(\tfrac{t}\gamma\big)+\tfrac{t}{4} < -\tfrac{t}{4}y\right)	\approx e^{-t^2\Phi_{-}(y)};} 
\end{align*}
\begin{align*}\tag{Upper Tail}
{\mathbb{P}\left(-H_0\big(\tfrac{t}\gamma\big)+\tfrac{t}{4} > +\tfrac{t}{4}y\right)\approx e^{-t\Phi_{+}(y)}.}
\end{align*}
Thus, the upper tail corresponds to ASEP being ``too slow" while the lower tail corresponds to ASEP being ``too fast". Heuristically, we can make sense of such speed differentials. Because of the nature of the exclusion process, when a \textit{single} particle is moving slower than the usual, it forces \textit{all} the particles on the left of it to be automatically slower. Hence ASEP becomes slow if \textit{only one} particle is moving slow. This event has probability of the order $\exp(-O(t))$. However, in order to ensure that there are many particles on the right side of origin (this corresponds to ASEP being fast), it requires a large number of particles to move fast \textit{simulatenously}. This event is much more unlikely and happens with probability $\exp(-O(t^2))$.

In this article, we focus on the \textit{upper-tail} deviations of the ASEP with step initial data and present the first proof of the ASEP upper-tail LDP on the \textit{complete} real line. Consider ASEP with $q\in (\frac12,1)$ and set $p=1-q$ and $\tau=p/q\in (0,1)$. Our first theorem computes the $s$th-\textit{Lyapunov exponent} of $\tau^{H_0(t)}$, which is the limit of  the logarithm of $\Ex[\tau^{sH_0(t)}]$ scaled by time:
	\begin{theorem} \label{thm:frac_mom}
		For $s\in (0,\infty)$ we have
		\begin{align}\label{eq:exp} 
			\lim_{t\to \infty} \frac1t\log \Ex [\tau^{sH_0(t)}]=-\hq(s)=:-(q-p)\frac{1-\tau^{\frac{s}2}}{1+\tau^{\frac{s}2}}.
		\end{align}
	\end{theorem}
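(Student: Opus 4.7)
The plan is to compute the Lyapunov exponent by starting from the $\tau$-Laplace transform of $H_0(t)$. For step initial data, the Tracy--Widom ASEP formula (equivalently, the Borodin--Corwin--Sasamoto stochastic duality representation) gives
\[
G_t(\zeta) \;:=\; \Ex\Bigl[\frac{1}{(\zeta\tau^{H_0(t)};\tau)_\infty}\Bigr]\;=\;\det\bigl(I + K_{\zeta,t}\bigr)_{L^2(\mathcal{C})},
\]
with an explicit integral kernel $K_{\zeta,t}$. The $q$-binomial expansion $(a;\tau)_\infty^{-1}=\sum_{k\ge 0}a^k/(\tau;\tau)_k$ identifies $G_t(\zeta)$ with the generating function of the integer moments $\Ex[\tau^{kH_0(t)}]$. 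To recover arbitrary fractional moments, I would use a Mellin--Barnes type inversion of the schematic form
\[
\Ex[\tau^{sH_0(t)}] \;=\; \frac{1}{2\pi\i}\int_{\mathcal{D}_s}\Psi_s(w)\,G_t(\zeta_w)\,dw,
\]
where $\Psi_s(w)$ carries simple poles at $w\in\Z_{\ge 0}$ (e.g.\ through $\pi/\sin(\pi w)$), $\zeta_w$ is an appropriate parametrization, and $\mathcal{D}_s$ is a vertical contour placed between the pole structures of $\Psi_s$ and $G_t(\zeta_w)$ so that collapsing it rightward recovers the integer moments.

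The heart of the argument is a two-stage asymptotic analysis. First, I would establish that on the scale $\zeta=-\tau^{-\xi t}$ with $\xi>0$,
\[
\log G_t(-\tau^{-\xi t}) \;=\; t\,\Phi(\xi)\,+\,o(t)
\]
for an explicit $\Phi$. The kernel $K_{\zeta,t}$ admits a double-contour form carrying an exponential weight $\exp\bigl(t(f_\zeta(w)-f_\zeta(v))\bigr)$ with a critical point $w_\ast(\xi)$ that migrates with $\xi$; standard steepest-descent deformations of the $v$- and $w$-contours through $w_\ast(\xi)$ localize the leading mass and reduce the Fredholm trace to products of $\tau$-Pochhammer factors that sum to $\Phi(\xi)$. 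Second, substituting this asymptotic into the Mellin--Barnes representation and rescaling the $w$-variable appropriately, a saddle-point evaluation in $\xi$ yields
\[
\Ex[\tau^{sH_0(t)}] \;=\; \exp\!\Bigl(-t\sup_{\xi>0}\bigl\{s\xi\log\tau^{-1}+\Phi(\xi)\bigr\}+o(t)\Bigr),
\]
after which an algebraic check using $\tau=p/q$ and the factorization $q-p=(\sqrt{q}-\sqrt{p})(\sqrt{q}+\sqrt{p})$ identifies the supremum with $h_q(s)$.

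The hard part will be making the two successive saddle-point manipulations, and especially their interchange, fully rigorous. Two specific obstacles stand out: (i) the Fredholm-kernel contours depend on $\zeta$ and must be deformed consistently as $\zeta=\zeta_w$ traverses the outer Mellin--Barnes contour, threading between accumulating $\tau$-spaced poles of the Pochhammer symbols; and (ii) although $\Ex[\tau^{sH_0(t)}]\le 1$ provides crude boundedness, the Mellin--Barnes integrand has no automatic exponential damping in $|\im w|$, so sharp tail bounds on $G_t(\zeta_w)$ for $|\im w|$ large are needed to convert the formal asymptotic into a genuine limit rather than just a $\limsup$. Once uniform control of the inner asymptotic and a matching lower bound (via, e.g., a truncated-contour or concentration argument) are in hand, the explicit form $h_q(s)=(q-p)(1-\tau^{s/2})/(1+\tau^{s/2})$ falls out as the Legendre transform of $\Phi$.
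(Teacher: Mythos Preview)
Your proposal has real gaps, and the paper takes a substantially different and more concrete route.

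First, the inversion step. You write the fractional-moment extraction only ``schematically'', with an unspecified $\Psi_s$, an unspecified parametrization $\zeta_w$, and an unspecified contour $\mathcal D_s$. The paper does not use a complex Mellin--Barnes inversion at all. Instead it uses a real-variable identity (Lemma~\ref{lm:frac_mom}): for $n=\lfloor s\rfloor+1$ and $\alpha=s-\lfloor s\rfloor$,
\[
\Ex[\tau^{sH_0(t)}]\;\asymp\;(-1)^n\int_1^{e^{tB_q(s/2)}}\zeta^{-\alpha}\,\partial_\zeta^n\det(I+K_{\zeta,t})\,d\zeta,
\]
up to $t$-independent multiplicative constants (Proposition~\ref{p:red}). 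This replaces your complex contour and its damping problems by a real integral over a compact $\zeta$-range, and it bypasses your obstacle (ii) entirely.

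Second, and more importantly, the paper never computes a full asymptotic $\log G_t\sim t\Phi(\xi)$ nor recovers $h_q(s)$ as a Legendre transform of such a $\Phi$. The key structural idea you are missing is that the $n$-fold $\zeta$-differentiation \emph{isolates the trace term} of the Fredholm series: one writes $\det(I+K_{\zeta,t})=1+\tr K_{\zeta,t}+\sum_{L\ge 2}\tr(K_{\zeta,t}^{\wedge L})$, and shows that after differentiating and integrating against $\zeta^{-\alpha}$ the $L=1$ term $\mathcal A_s(t)$ dominates while the $L\ge 2$ contribution $\mathcal B_s(t)$ is exponentially smaller (Propositions~\ref{p:leading} and~\ref{p:ho}). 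The quantity $h_q(s)$ then appears \emph{directly} as the saddle value $f(s,\tau^{1-s/2})=-h_q(s)$ of the single-integral kernel, not via any outer Legendre optimization; the periodic family of critical points (your obstacle (i) is a shadow of this) is handled by an explicit oscillatory-integral argument showing only the $k=0$ critical point contributes at leading order. Your plan to ``reduce the Fredholm trace to products of $\tau$-Pochhammer factors'' does not match what actually occurs: no such algebraic simplification takes place, and without the differentiation trick there is no reason the trace alone should govern the asymptotics of $G_t$.

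In short: the missing ideas are (a) the real-variable fractional-moment formula in place of a Mellin--Barnes inversion, and (b) the observation that $n$-fold differentiation in $\zeta$ makes the trace term the leading contribution, so that a single steepest-descent analysis of $\tr K_{\zeta,t}$ yields $-h_q(s)$ directly, with the higher Fredholm terms controlled by Hadamard's inequality and the strict subadditivity of $h_q$.
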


It is well known (see Proposition 1.12 in \cite{gl20} for example) that the \textit{upper-tail} large deviation principle of the stochastic process $\log \tau^{H_0(t)}$ is the Legendre-Fenchel dual of the Lyapunov exponent in \eqref{eq:exp}. Since $\tau<1$, as a corollary, we obtain the following \textit{upper-tail} large deviation rate function for $-H_0(t)$.
\begin{theorem}\label{thm:ldp}
	{ For any $y\in (0,1)$ we have
		\begin{align}\label{eq:ldp}
			\lim_{t\to\infty}\frac1t\log\P\left(-H_0\big(\tfrac{t}\gamma\big)+\tfrac{t}{4} > \tfrac{t}{4}y\right)=-[\sqrt{y}-(1-y)\tanh^{-1}(\sqrt{y})]=:-\Phi_{+}(y),
		\end{align}}
	where $\gamma=2q-1$. Furthermore, we have the following asymptotics near zero:
	\begin{align}\label{eq:asy}
		\lim_{y\to 0^+} y^{-3/2}\Phi_{+}(y)=\tfrac23.
	\end{align}
	\end{theorem}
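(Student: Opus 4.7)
The plan is to deduce \eqref{eq:ldp} directly from Theorem \ref{thm:frac_mom} via a one-sided G\"artner--Ellis argument, packaged here as Proposition 1.12 of \cite{gl20}. First, rewrite the event as a lower-tail event for $H_0$: since
\begin{align*}
\bigl\{-H_0(t/\gamma)+t/4 > ty/4\bigr\} = \bigl\{H_0(t/\gamma) < ta\bigr\}, \qquad a := \tfrac{1-y}{4} \in \bigl(0,\tfrac{1}{4}\bigr),
\end{align*}
it suffices to control $\P(H_0(t/\gamma) < ta)$ on the exponential scale. Setting $\mu := s\log\tau \le 0$ (so that $\tau^{s/2} = e^{\mu/2}$), the substitution $t \mapsto t/\gamma$ in \eqref{eq:exp} combined with a direct simplification of $(1-e^{\mu/2})/(1+e^{\mu/2})$ gives
\begin{align*}
\Lambda(\mu) := \lim_{t\to\infty} \tfrac{1}{t}\log \Ex\bigl[e^{\mu H_0(t/\gamma)}\bigr] = \tanh(\mu/4), \qquad \mu \le 0.
\end{align*}
Since $\Lambda$ is smooth and convex on $(-\infty,0]$ with $\Lambda'(0^-) = 1/4$ matching the LLN limit, Proposition 1.12 of \cite{gl20} yields
\begin{align*}
\lim_{t\to\infty} \tfrac{1}{t}\log \P\bigl(H_0(t/\gamma) < ta\bigr) = -\sup_{\mu \le 0}\bigl[\mu a - \tanh(\mu/4)\bigr]
\end{align*}
for every $a \in (0, 1/4)$.

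The next step is to solve the optimization explicitly. The critical-point equation $a = \frac{1}{4\cosh^2(\mu^*/4)}$ rearranges to $\cosh(\mu^*/4) = 1/(2\sqrt{a})$, with unique nonpositive solution $\mu^*/4 = -\log\frac{1+\sqrt{1-4a}}{2\sqrt{a}}$ and corresponding value $\tanh(\mu^*/4) = -\sqrt{1-4a}$. Writing $y = 1 - 4a$ and using the identity
\begin{align*}
\log\frac{1+\sqrt{y}}{\sqrt{1-y}} = \tfrac{1}{2}\log\frac{1+\sqrt{y}}{1-\sqrt{y}} = \tanh^{-1}(\sqrt{y}),
\end{align*}
the supremum evaluates to $\sqrt{y} - (1-y)\tanh^{-1}(\sqrt{y}) = \Phi_+(y)$, establishing \eqref{eq:ldp}.

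For the asymptotic \eqref{eq:asy} near zero, I would plug in the Taylor expansion $\tanh^{-1}(x) = x + x^3/3 + O(x^5)$ to obtain
\begin{align*}
\Phi_+(y) = \sqrt{y} - (1-y)\Bigl(\sqrt{y} + \tfrac{y^{3/2}}{3} + O(y^{5/2})\Bigr) = \tfrac{2}{3}\,y^{3/2} + O(y^{5/2}),
\end{align*}
so $y^{-3/2}\Phi_+(y) \to 2/3$. With Theorem \ref{thm:frac_mom} in hand there is no serious obstacle here: the whole argument reduces to a calculus exercise in Legendre duality. The only points requiring care are the hypotheses of Proposition 1.12 of \cite{gl20}---convexity and smoothness of $\Lambda$ on $(-\infty,0]$, the matching of $\Lambda'(0^-)$ with the LLN value $1/4$ (which guarantees the one-sided supremum captures the full rate on $(0,1/4)$), and the trivial steepness at $-\infty$ (automatic since $\Lambda$ is bounded below by $-1$).
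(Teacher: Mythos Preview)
Your proof is correct and follows essentially the same route as the paper: both invoke Proposition 1.12 of \cite{gl20} applied to $H_0(t/\gamma)\log\tau$ and then solve the resulting Legendre--Fenchel dual explicitly. The differences are purely cosmetic---you reparametrize via $\mu=s\log\tau$ to obtain the clean form $\Lambda(\mu)=\tanh(\mu/4)$ and solve the critical-point equation directly, whereas the paper keeps the $s$-variable and packages the same computation as Proposition \ref{p:htau}\ref{c}; and for \eqref{eq:asy} you use a Taylor expansion of $\tanh^{-1}$ while the paper applies L'H\^opital twice.
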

\begin{figure}[h!]
	\begin{center}
		\includegraphics[width=7cm]{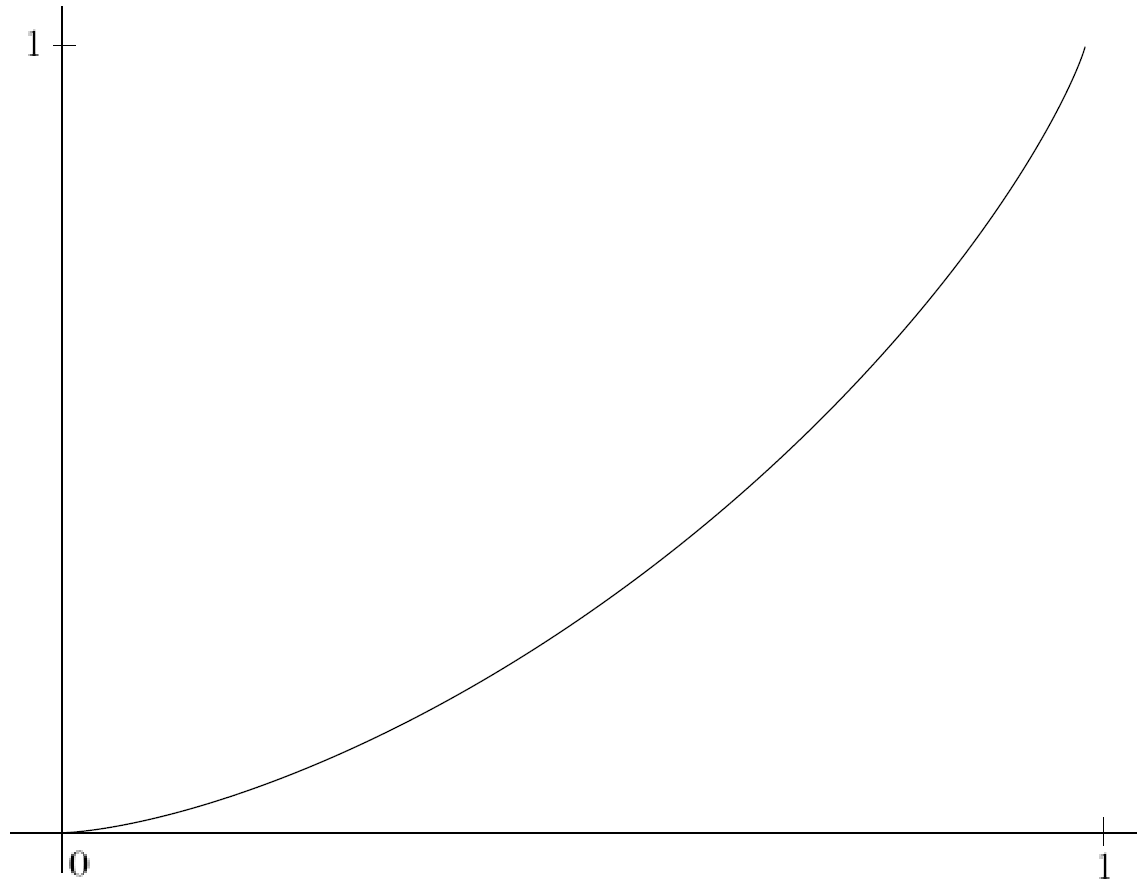} \ \ 
		\includegraphics[width=7cm]{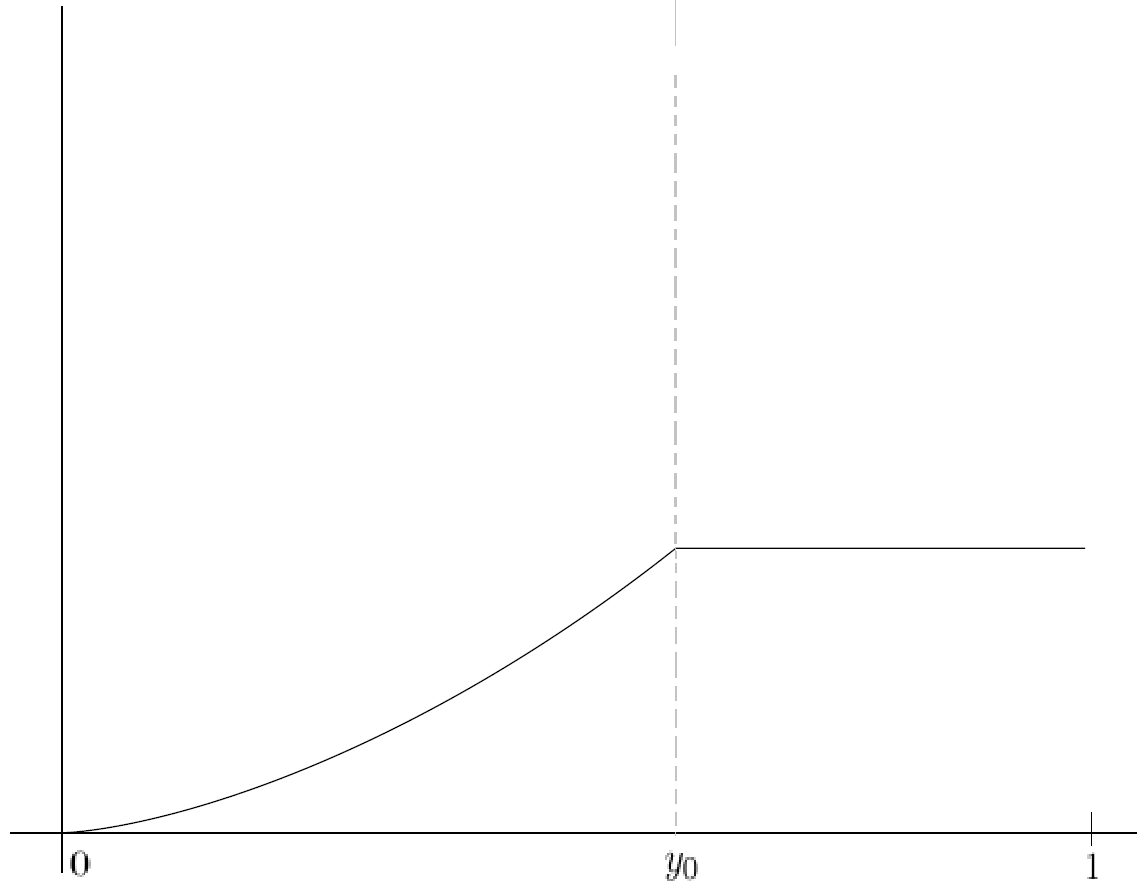}
		\vspace{-2mm}
		\caption{The figure on the left is the plot of $\Phi_{+}(y)$. The right one is the plot of $\til\Phi_{+}(y)$.}
		\label{tphi}
	\end{center}
\end{figure}

\begin{remark} Note that our large deviation result is restricted to $y\in (0,1)$ as $\P(-H_0\big(\tfrac{t}\gamma\big)+\tfrac{t}{4} > \tfrac{t}{4}y)=0$ for $y\ge 1$. Furthermore, although Theorem \ref{thm:ldp} makes sense when $q=1$, one cannot recover it from Theorem \ref{thm:frac_mom}, which only makes sense for $\tau=(1-q)/q\in (0,1)$. However, as mentioned before, \cite{joh} has already settled the $q=1$ TASEP case and obtained the upper-tail rate function in a variational form. We will later show in Appendix \ref{app} that \cite{joh} variational formula for TASEP matches with our rate function in \eqref{eq:ldp}.
\end{remark}
\begin{remark} {Recently, the work \cite{dp} has obtained a one-sided large deviation bound for the upper tail of the ASEP. In particular, they showed
	\begin{align}
		\mathbb{P}\left(-H_0\big(\tfrac{t}\gamma\big)+\tfrac{t}{4} > \tfrac{t}{4}y\right) \le \Con e^{-t\til\Phi_{+}(y)},\quad y\in (0,1).
	\end{align}
The function $\til\Phi_{+}$ coincides with the correct rate function $\Phi_{+}$ defined in \eqref{eq:ldp} only for $y \le y_0:= \frac{1-2\sqrt{q(1-q)}}{1+2\sqrt{q(1-q)}}$, as captured by Figure \ref{tphi}. We  will further compare and contrast our results and method with \cite{dp} later in Section \ref{sec:pre}.}
\end{remark}
\begin{remark} For $y$ small enough, following \eqref{eq:clt2} and upper tail decay of GUE Tracy-Widom distribution \cite{dumaz}, one expects {$$\P\left(-H_0\big(\tfrac{t}\gamma\big)+\tfrac{t}{4} > \tfrac{t}{4}y\right) \approx \P(\xi_{\operatorname{GUE}} >2^{-2/3}yt^{2/3}) \approx e^{-\frac{2}3y^{3/2}t}$$}
	Thus the asymptotics in \eqref{eq:asy} shows that $\Phi_{+}(y)$ indeed recovers the expected GUE Tracy-Widom tails as $y\to 0^+$.
\end{remark}

	\subsection{Sketch of proof}\label{sec:ske} In this section we present a sketch of the proof of our main results. {As explained before, Theorem \ref{thm:ldp} can be obtained from Theorem \ref{thm:frac_mom} by standard Legendre-Fenchel transform technique. {So here we only give a brief account of the proof  idea of Theorem \ref{thm:frac_mom}. A more detailed overview of the proofs of our main results can be found in Section \ref{sec:fraccase}}.}
	
	The main component of our proof is the following $\tau$-Laplace transform formula for ${H_0(t)}$ that appears in	Theorem 5.3 in \cite{bcs}:
	
	\begin{theorem}[Theorem 5.3 in \cite{bcs}] \label{thm:laplace} Fix any $\delta\in (0,1)$. For $\zeta>0$ we have
		\begin{align}\label{eq:laplace}
			\Ex \left[F_q(\zeta\tau^{H_0(t)})\right]=\det(I+K_{\zeta,t}), \quad F_q(\zeta):=\prod_{n=0}^{\infty}\frac{1}{1+\zeta\tau^n}.
		\end{align}
	Here $\det(I + K_{\zeta, t})$ is the Fredholm determinant of  $K_{\zeta,t}: L^2(\C(\tau^{1-\frac{\delta}{2}})) \rightarrow L^2(\C(\tau^{1-\frac{\delta}{2}})),$ and $\C(\tau^{1-\frac{\delta}{2}})$ denotes a positively-oriented circular contour centered at 0 with radius $\tau^{1-\frac{\delta}{2}}.$
	The operator $K_{\zeta, t}$ is defined through the integral kernel
	\begin{align} \label{def: ker}
		K_{\zeta,t}(w,w') &: =\frac1{2\pi \i}\int\limits_{\delta-\i\infty}^{\delta+\i\infty} \Gamma(-u)\Gamma(1+u)\zeta^u	\frac{g_t({w})}{g_t({\tau^uw})}\frac{\d u}{w'-\tau^u w},\ \mbox{ for  }g_t(z)=\exp\left(\frac{(q-p)t}{1+\frac{z}{\tau}}\right).
	\end{align}
	\end{theorem}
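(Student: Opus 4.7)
Equation \eqref{eq:laplace} is a generating-function identity, so the plan is to first compute each integer moment $\Ex[\tau^{kH_0(t)}]$ explicitly and then repackage the moment generating series as the Fredholm determinant $\det(I+K_{\zeta,t})$. For the moment computation, I would invoke the $\tau$-self-duality of the ASEP generator, which reduces each joint observable $\Ex\bigl[\prod_i\tau^{H_{x_i}(t)}\bigr]$ to the propagation of a $k$-body system on $\Z$. Diagonalizing this $k$-body operator by the coordinate Bethe ansatz and projecting onto step initial data produces an explicit $k$-fold nested contour integral representation of $\Ex[\tau^{kH_0(t)}]$, built around the Cauchy-type symmetric product $\prod_{A<B}(z_A-z_B)/(z_A-\tau z_B)$ and single-particle factors exactly of the form $g_t(z)/g_t(\tau z)$ with the same $g_t$ as in \eqref{def: ker}.

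\textbf{From moments to the Fredholm determinant.} The $\tau$-binomial expansion $F_q(\zeta)=(-\zeta;\tau)_\infty^{-1}=\sum_{k\ge 0}\frac{(-\zeta)^k}{(\tau;\tau)_k}$ gives, for $|\zeta|$ sufficiently small,
\begin{align*}
\Ex[F_q(\zeta\tau^{H_0(t)})]=\sum_{k\ge 0}\frac{(-\zeta)^k}{(\tau;\tau)_k}\,\Ex[\tau^{kH_0(t)}].
\end{align*}
Plugging in the moment formula from Step 1, one groups the $k$-dependent pieces into a factor of the form $(-\zeta)^k\phi(\tau^k)$ and appeals to the Mellin--Barnes identity
\begin{align*}
\sum_{k\ge 0}(-\zeta)^k\,\phi(\tau^k)=\frac{1}{2\pi\i}\int_{\delta-\i\infty}^{\delta+\i\infty}\Gamma(-u)\Gamma(1+u)\,\zeta^u\,\phi(\tau^u)\,\d u,
\end{align*}
which is obtained by closing the $u$-contour to the right and collecting residues at $u=0,1,2,\ldots$, where $\Gamma(-u)$ has simple poles. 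This move introduces the $u$-contour featured in \eqref{def: ker}. The symmetric Cauchy factor $\prod_{A<B}(z_A-z_B)/(z_A-\tau z_B)$ can then be re-expanded via the Andreief/Cauchy--Binet identity applied to the Cauchy-type kernel $1/(w'-\tau^u w)$, so that the resulting $k$-fold integral becomes precisely the $k$-th term in the Fredholm expansion of $\det(I+K_{\zeta,t})$. Analytic continuation in $\zeta$ extends the identity from small $|\zeta|$ to all $\zeta>0$.

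\textbf{Main obstacle.} The deepest difficulty lies in Step 1: the Bethe-ansatz derivation of the nested contour moment formula requires eigenfunction decomposition of the non-self-adjoint $k$-particle generator together with a symmetrization/residue argument to specialize to step initial data, and obtaining explicit contour integral forms demands nontrivial combinatorial identities (of Tracy--Widom type). A second delicate point is ensuring in Step 2 that the two contours can be chosen simultaneously --- the Mellin--Barnes $u$-contour must sit just to the right of $0$ so that closing to the right recovers the discrete sum over $u=k\in\Z_{\ge 0}$, while the $w$-contour of radius $\tau^{1-\delta/2}$ must enclose the relevant singularities of $g_t$ and keep the pole at $w'=\tau^u w$ on the correct side for all $u$ on the Mellin--Barnes contour; the asymmetric radii of the $w$- and $u$-contours are exactly what make these constraints compatible.
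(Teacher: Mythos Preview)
Your proposal sketches a genuine derivation of the Fredholm determinant identity, essentially reconstructing the argument of Borodin--Corwin--Sasamoto: duality/Bethe ansatz to get nested-contour moment formulas, $\tau$-binomial expansion of $F_q$, Mellin--Barnes resummation, and a Cauchy--Binet/Andreief rewriting into a Fredholm series. As a high-level outline this is the right architecture, and the pieces you identify (the Cauchy factor $\prod_{A<B}(z_A-z_B)/(z_A-\tau z_B)$, the single-particle propagator $g_t$, the Mellin--Barnes contour at $\rl u=\delta$) are exactly the ones that appear in the actual derivation.

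However, this is \emph{not} what the paper does. The paper treats Theorem~\ref{thm:laplace} as a citation of Theorem~5.3 in \cite{bcs}, which is stated there under general hypotheses on the $w$- and $u$-contours. The only content of the paper's proof is to verify that the \emph{specific} contour choice $w,w'\in\C(\tau^{1-\delta/2})$, $u\in\delta+\i\R$ satisfies those hypotheses, namely
\[
\inf_{w,w',u}|w'-\tau^u w|>0\quad\text{and}\quad\sup_{w,w',u}\Big|\tfrac{g_t(w)}{g_t(\tau^u w)}\Big|<\infty,
\]
both of which follow from elementary estimates (the first from $|w'-\tau^u w|\ge\tau^{1-\delta/2}-\tau^{1+\delta/2}>0$, the second from Lemma~\ref{lem:max}). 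So the paper's ``proof'' is a two-line contour check, whereas you are proposing to redo the entire \cite{bcs} machinery.

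Your route is not wrong, but it is disproportionate to what is being asked, and your ``main obstacle'' paragraph correctly flags that the Bethe-ansatz moment formula and the contour-compatibility issues are where all the real work lies --- work that the present paper deliberately outsources. If you intend to match the paper, replace your derivation-from-scratch by: (i) cite Theorem~5.3 of \cite{bcs}; (ii) check the two displayed contour conditions above for the stated radius $\tau^{1-\delta/2}$.
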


\begin{remark}
	The original statement of the above theorem in \cite{bcs} appears in a much more general setup with general conditions on the contours. We will explain the choice of our contours stated above in Section \ref{sec:leading} and check that it satisfies the general criterion for contours as stated in Theorem 5.3 in \cite{bcs}.
\end{remark}

\noindent We next recall that the Fredholm determinant is defined as a series as follows.
\begin{align}
	\det(I+K_{\zeta,t}) & :=1+\sum_{L=1}^{\infty} \tr(K_{\zeta,t}^{\wedge L}) \label{eq:fdhm} \\ & := 1+\sum_{L=1}^{\infty} \frac{1}{L!}\int_{\C(\tau^{1-\frac{\delta}2})}\cdots \int_{\C(\tau^{1-\frac{\delta}2})} \det(K_{\zeta,t}(w_i,w_j))_{i,j=1}^{L}\prod_{i=1}^L \d w_i. \label{eq:f-series}
\end{align}
The notation $K_{\zeta,t}^{\wedge L}$ comes from the exterior algebra definition, which we refer to \cite{sim77} for more details. As a clarifying remark, we use this exterior algebra notation only for the simplicity of its expression and rely essentially on the definition in \eqref{eq:f-series} throughout the rest of the paper.

	To extract information on the fractional moments of $\tau^{H_0(t)}$, we combine the formula in \eqref{eq:laplace} with the following elementary identity, which is a generalized version of Lemma 1.4 in \cite{dt19}.

	\begin{lemma}\label{lm:frac_mom} Fix $n\in \Z_{>0}$ and $\alpha\in [0,1)$. Let $U$ be a nonnegative random variable with finite $n$-th moment. Let $F: [0,\infty)\to [0,1]$ be a $n$-times differentiable function such that $\int_0^{\infty} \zeta^{-\alpha}F^{(n)}(\zeta)\d \zeta$ is finite. Assume further that $\norm{F^{(k)}}_{\infty}<\infty$ for all $1\le k\le n$. Then the $(n-1+\alpha)$-th moment of $U$ is given by
		\begin{align*}
			\Ex [U^{n-1+\alpha}]=\dfrac{\int\limits_0^{\infty} \zeta^{-\alpha}\Ex[U^nF^{(n)}(\zeta U)]\d \zeta}{\int\limits_0^{\infty} \zeta^{-\alpha}F^{(n)}(\zeta)\d \zeta}=\dfrac{\int\limits_0^{\infty} \zeta^{-\alpha}\frac{\d^n}{\d\zeta^n}\Ex[F(\zeta U)]\d \zeta}{\int\limits_0^{\infty} \zeta^{-\alpha}F^{(n)}(\zeta)\d \zeta}.
		\end{align*}
	\end{lemma}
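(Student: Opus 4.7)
The plan is to combine a differentiation-under-the-expectation argument with Fubini and a scaling change of variables. First I would verify the second equality in the statement: iteratively applying dominated convergence, one shows $\frac{d^k}{d\zeta^k}\Ex[F(\zeta U)]=\Ex[U^k F^{(k)}(\zeta U)]$ for each $1\le k\le n$. At step $k$, the $\zeta$-derivative of the integrand is $U^{k+1}F^{(k+1)}(\zeta U)$, dominated in absolute value by $U^{k+1}\|F^{(k+1)}\|_\infty$; since $U^{k+1}\le 1+U^n$, this gives an integrable majorant, legitimizing the swap of derivative and expectation.

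Second, I would establish the main identity by Fubini followed by a dilation. Formally,
\begin{align*}
\int_0^{\infty}\zeta^{-\alpha}\,\Ex[U^n F^{(n)}(\zeta U)]\,\d\zeta
=\Ex\!\left[U^n\int_0^{\infty}\zeta^{-\alpha}F^{(n)}(\zeta U)\,\d\zeta\right].
\end{align*}
On the event $\{U>0\}$, the substitution $\eta=\zeta U$ yields $\int_0^{\infty}\zeta^{-\alpha}F^{(n)}(\zeta U)\,\d\zeta=U^{\alpha-1}\int_0^{\infty}\eta^{-\alpha}F^{(n)}(\eta)\,\d\eta$, while on $\{U=0\}$ the prefactor $U^n$ annihilates the integrand. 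Hence the right-hand side equals $\Ex[U^{n-1+\alpha}]\cdot\int_0^{\infty}\eta^{-\alpha}F^{(n)}(\eta)\,\d\eta$, and dividing by the (assumed finite, nonzero) denominator delivers the first equality of the lemma. The second equality then follows immediately from Step~1.

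The principal obstacle is the Fubini step: the hypothesis only asserts finiteness of $\int_0^{\infty}\zeta^{-\alpha}F^{(n)}(\zeta)\,\d\zeta$ (without absolute value), whereas a rigorous exchange requires absolute integrability of the joint integrand. The cleanest route is to assume (or verify in the intended application $F=F_q$) that $F^{(n)}$ has a definite sign, in which case Tonelli applies directly. In the absence of sign control, one decomposes $F^{(n)}=(F^{(n)})_+-(F^{(n)})_-$, applies Tonelli to each non-negative piece to obtain the scaling identity
\[
\int_0^{\infty}\zeta^{-\alpha}\Ex[U^n (F^{(n)})_\pm(\zeta U)]\,\d\zeta=\Ex[U^{n-1+\alpha}]\int_0^{\infty}\eta^{-\alpha}(F^{(n)})_\pm(\eta)\,\d\eta,
\]
and checks that at least one side is finite so that the subtraction is legitimate; here $\Ex[U^{n-1+\alpha}]<\infty$ follows from $\Ex[U^n]<\infty$ via $U^{n-1+\alpha}\le 1+U^n$. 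No analogous sign difficulty arises in the differentiation-under-the-integral step, since the uniform bounds $\|F^{(k)}\|_\infty<\infty$ already furnish the needed majorants.
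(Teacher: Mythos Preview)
Your proposal is correct and follows the same approach as the paper, which merely sketches ``an interchange of measure justified by Fubini's theorem and the dominated convergence theorem, as $\Ex[U^n]$ and $\norm{F^{(k)}}_{\infty} < \infty$.'' Your write-up is a faithful fleshing-out of that sketch, and you even flag the sign issue in the Fubini step that the paper tacitly resolves via the application $F=F_q$, where Proposition~\ref{p:etau}\ref{fa} gives $(-1)^n F_q^{(n)}\ge 0$.
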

	The proof of this lemma follows by an interchange of measure justified by Fubini's theorem and the dominated convergence theorem, as $\Ex[U^n]$ and $\norm{F^{(k)}}_{\infty} < \infty$ for all $1 \le k \le n.$

	For $s>0$, we apply this lemma with $U = \tau^{H_0(t)}$, $n = \lfloor s \rfloor +1$ and $\alpha=s-\lfloor s \rfloor$. We take $F(x)= \fq(x)$ defined in \eqref{eq:laplace} which is shown to be satisfy the hypothesis of Lemma \ref{lm:frac_mom} (see Proposition \ref{p:etau}). As a result, we transform the computation of $\Ex[\tau^{sH_0(t)}]$ into that of \begin{align}\label{eq:int1}
		\int_0^{\infty} \zeta^{-\alpha}\frac{\d^n}{\d\zeta^n}\Ex[F_q(\zeta \tau^{H_0(t)})]\d \zeta.
	\end{align} 
	Utilizing the exact formula from \eqref{eq:laplace} and the definition of Fredholm determinant from \eqref{eq:f-series}, we can write the above expression as a series where we identify the leading term (corresponding to $L=1$ term of the series) and a higher-order term (corresponding to $L\ge 2$ terms of the series). We eventually show that the asymptotics of the leading term matches with the exact asymptotics in \eqref{eq:exp} while the higher-order term decays much faster. This leads to the proof of Theorem \ref{thm:frac_mom}.

	{The above description of our method is in line with the Lyapunov moment approach adopted in the works of \cite{dt19}, \cite{gl20} and \cite{lin20} to obtain upper-tail large deviation results of other integrable models, such as the KPZ equation. Namely, we extract fractional moments from the ($\tau$-)Laplace transform such as \eqref{eq:laplace} according to Lemma \ref{lm:frac_mom}. 
	In particular, our work draws from those of  \cite{dt19} and \cite{lin20}, which studied the fractional moments of the Stochastic Heat  Equation (SHE) and the half-line Stochastic Heat  Equation, respectively. We will further contextualize the connections of our work to \cite{dt19}, \cite{gl20} and \cite{lin20} in Section \ref{sec:pre}. In the following text, however, we emphasize a few key differences and technical challenges unique to the ASEP that we have encountered and resolved in our proof.}
	
	{First, unlike SHE or half-line SHE, the usual Laplace transform is not available in case of the ASEP. Instead, we only have the $\tau$-Laplace transform for our observable of interest. As a result, we have formulated Lemma \ref{lm:frac_mom} in our paper, which is more generalized than its prototype in \cite[Lemma 1.4]{dt19}, to feed in the $\tau$-Laplace transform. Consequently, we have worked with $\tau$-exponential functions in our analysis.}
	
	{Another key difference is that the kernel $K_{\zeta, t}$ in \eqref{def: ker} in our model is much more intricate than its counterpart in the KPZ model and leads to much more involved analysis of the leading term. Indeed, $K_{\zeta,t}$ is asymmetric and as $u$ varies in $(\delta - \i \infty, \delta + \i \infty)$, the function $\frac{g_t(w)}{g_t(\tau^uw)}$ appearing in the kernel $K_{\zeta,t}$, exhibits a periodic behavior, whereas the kernel in the KPZ models involves Airy functions in its integrand which have a unique maximum and are much easier to analyze. Furthermore, our model exhibits exponentially decaying moments  of $\tau^{H_0(t)}$ as opposed to the exponentially increasing ones of  the KPZ models in \cite{dt19} and \cite{lin20} and this demands a more precise understanding of the trace term of our Fredholm determinant expansion. For instance in Section \ref{sec:leading}, to obtain the precise asymptotics for our leading term, we have performed steepest descent analysis on the kernel $K_{\zeta, t}$, where the periodic nature of $\frac{g_t(w)}{g_t(\tau^uw)}$ results in infinitely many critical points. A major technical challenge in our proof is to argue how the contribution from only one of the critical points dominates the those from the rest and this is accomplished in the proof of Proposition \ref{p:leading}.
	Similarly, the asymmetry of the kernel in the ASEP model has led us to opt for the Hadamard's inequality approach as exemplified in Section 4 of \cite{lin20}, instead of the operator theory argument in \cite{dt19}, to obtain a sufficient upper bound for the higher-order terms in our paper in Section \ref{sec:higher}.}

  \subsection{Comparison to Previous Works}\label{sec:pre}
  	{In a broader context, our main result on the Lyapunov exponent for the ASEP with step initial data and its upper-tail large deviation belongs to the undertakings of studying the intermittency phenomenon and large deviation problems of  integrable models in the KPZ universality class.}
  As we have previously alluded to, the KPZ universality class contains a collection of random growth models that are characterized by scaling exponent of $1/3$ and certain universal  non-Gaussian large time fluctuations. We refer to \cite{acq,kpz,sasamoto} and the references therein for more details. The ASEP is one of the standard one-dimensional models of the KPZ universality class and bears connection to several other integrable models in this class, such as the stochastic six-vertex model \cite{bcg,agg,evgeni}, {KPZ equation \cite{cldr,dot10,ss10,acq,kpz}}, and $q$-TASEP \cite{bcs}.

On the other hand, the intermittency property is a universal phenomenon that captures high population concentrations on small spatial islands over large time. Mathematically, the intermittency of a random field is defined in terms of its Lyapunov exponents. In particular, the connection between integer Lyapunov moments and intermittency has long been an active area of study in the SPDE community in last few decades \cite{gar,car,ber,foon,hu,conus,chen,balan}. For the KPZ equation, \cite{kar} predicted the integer Lyapunov exponents for the SHE using replica Bethe anstaz techniques. {This result was later first rigorously attempted in \cite{ber} and correctly proven in \cite{che}}. Similar formulas were shown for the moments of the parabolic Anderson model, semi-discrete directed polymers, q-Whittaker process (see \cite{mcd} and \cite{bc}). 
For the ASEP, integer moments formula for $\tau^{H_0(t)}$ were obtained in \cite{bcs} using nested contour integral ansatz.

{From the perspective of tail events, by studying the asymptotics of  integer Lyapunov exponents formulas, one can extract one-sided bounds on the upper tails of integrable models. However, these integer Lyapunov exponents alone are not sufficient to provide the exact large deviation rate function.}

 Recently, a stream of effort has been devoted to studying large deviations for some KPZ class models by explicitly computing the fractional Lyapunov exponents.  %In particular, a stream of effort has been devoted to harnessing the Lyapunov exponent approach to solving large deviation problems of integrable models in the KPZ universality class. 
 The work of \cite{dt19} set this series of effort in motion by solving the KPZ upper-tail large deviation principle through the fractional Lyapunov exponents of the SHE with delta initial data. \cite{gl20} soon extended the same result for the SHE for a large class of initial data, including any {random} bounded positive initial data and the stationary initial data. An exact way to compute every positive Lyapunov exponent of the half-line SHE was also uncovered in \cite{lin20}. In lieu of these developments, our main result for the ASEP with step initial data and its upper-tail large deviation fits into this broader endeavor of studying large deviation problems of  integrable models with the Lyapunov exponent appproach.

	Meanwhile, in the direction of  the ASEP, as mentioned before, \cite{dp} has produced a one-sided large deviation bound for the upper-tail probability appearing in \eqref{eq:ldp}  which coincides with the correct rate function $\Phi_{+}$ defined in \eqref{eq:ldp} for $y \le y_0:= \frac{1-2\sqrt{q(1-q)}}{1+2\sqrt{q(1-q)}}$. {This result was sufficient for their purpose of establishing a near-exponential fixation time for the coarsening model on $\Z^2$ 
 and \cite{dp} obtained it via steepest descent analysis on the exact formula for the probability of $H_0(t/\gamma)$.} More specially, they worked with the following result from \cite[Lemma 4]{tw2} as input:
		\begin{equation}\label{eq:tw}
		\P\left(-H_0\big(\tfrac{t}\gamma\big)+\tfrac{t}{4} > \tfrac{t}{4}y\right)= \frac{1}{2\pi \i}\int_{|\mu|= R}(\mu; \tau)_{\infty}\det(1 + \mu J_{m, t}^{(\mu)})\frac{\d \mu}{\mu},
	\end{equation}
	where $m=\lfloor \frac{1}{4}t(1-y)\rfloor$, $R \in (\tau, \infty)\setminus\{1, \tau^{-1}, \tau^{-2}, \ldots\}$ is fixed, $(\mu;\tau)_{\infty}: = (1-\mu)(1 - \mu\tau)(1 - \mu\tau^2)\ldots$ is the infinite $\tau$-Pochhammer symbol and $J_{m,t}^{(\mu)}$ is the kernel defined in Equation (3.4) of \cite{dp}. Analyzing the exact pre-limit Fredholm determinant $\det(1 + \mu J_{m, t}^{(\mu)})$, \cite{dp} chose appropriate contours for the kernel $J_{m,t}^{(\mu)}$ that pass through its critical points and performed a steepest descent analysis. However, their choice of contours was unattainable beyond the threshold $y_0$. Namely, if we attempted to deform the same contours for  $y > y_0$, we would inevitably cross poles, which rendered the steepest descent analysis much trickier.  By adopting the Lyapunov moment approach, we have avoided this problem when looking for the precise large deviation rate function.
	 
	 In addition to the relavence of our upper-tail LDP result, it is also worthy to remark on the difficulty of obtaining a lower-tail LDP of the ASEP with step initial data.  As explained before, the lower-tail $\mathbb{P}(-H_0\big(\tfrac{t}\gamma\big)+\tfrac{t}{4} < -\tfrac{t}{4}y)$ is expected to go to zero at a much faster rate of $\exp(-t^2 \Phi_{-}(y))$. The existence of the lower-tail rate function has so far only been shown in the case of TASEP in \cite{joh} through its connection to continuous log-gases. {The functional LDPs for TASEP for both tails have been studied in \cite{jen}, \cite{var}, \cite{x3} (upper tail), and \cite{ot17} (lower-tail). Large deviations for open systems with boundaries in contact with stochastic reservoirs has also been studied in physics literature. We mention \cite{DL98}, \cite{DLS03}, \cite{BD06} and the references therein for works in these directions.}

	 {More broadly for integrable models in the KPZ universality class, lower tail of the KPZ equation has been extensively studied in both mathematics and physics communities. In the physics literature, \cite{le2016large} provided the first prediction of the large deviation tails of the KPZ equation for narrow wedge initial data. For the upper tail, their analysis also yields subdominant corrections (\cite[Supp.~Mat.]{supp}).  Furthermore, the physics work of \cite{sasorov2017large} first predicted lower-tail rate function of the KPZ equation for narrow wedge initial data  in an analytical form, followed by the derivations in \cite{JointLetter} and \cite{ProlhacKrajenbrink} via different methods. The asymptotics of deep lower tail of KPZ equation was later obtained in \cite{KrajLedou2018} for a wide class of initial data. From the mathematics front, the work \cite{cg} provided detailed, rigorous tail bounds for the lower tail of the KPZ equation for narrow wedge initial data. The precise rate function of its lower-tail LDP was later proved in \cite{tsai18} and \cite{cr}, which confirmed the prediction of existing physics literature. The four different routes of deriving the lower-tail LDP in \cite{sasorov2017large}, \cite{JointLetter}, \cite{ProlhacKrajenbrink} and \cite{tsai18} were later shown to be closely related in \cite{largedev}. A new route has also been recently obtained in the physics work of \cite{doussal2019large} (see also \cite{ProlhacRiemannKPZ}).}
	 	
	 	{In the short time regime, large deviations for the KPZ equation has been studied extensively in physics literature (see \cite{x1}, \cite{x2}, \cite{krajenbrink} and the references therein for a review). Recently, \cite{lt21} rigorously derived the large deviation rate function of the KPZ equation  in the short-time regime in a variational form and recovered deep lower-tail asymptotics, confirming existing physics predictions}.   For non-integrable models, large deviations of first-passage percolation were studied in \cite{chow} and more recently \cite{basu}. For last-passage percolation with general weights, recently, geometry of polymers under lower tail large deviation regime has been studied in \cite{basu2}.

	\subsection*{Notation} Throughout the rest of the paper, we use $\Con = \Con(a,b,c,\ldots) > 0$ to denote a generic deterministic positive finite
	constant that is dependent on the designated variables $a, b, c, \ldots$. However, its particular content may change from line to line. We also use the notation $\C(r)$ to denote a positively oriented circle with center at origin and radius $r>0$. 
	
	\subsection*{Outline} The rest of this article is organized as follows. In Section \ref{sec:fraccase}, we introduce the main ingredients for the proofs of Theorem \ref{thm:frac_mom} and \ref{thm:ldp}. In particular, we reduce the proof of our main results to Proposition \ref{p:leading} (asymptotics of the leading order) and Proposition \ref{p:ho} (estimates for the higher order), which are proved in Sections \ref{sec:leading} and \ref{sec:higher} respectively. Finally, in Appendix \ref{app} we compare our rate function $\Phi_{+}(y)$, defined in \eqref{eq:ldp}, to that of TASEP.
	
	\subsection*{Acknowledgements} We are grateful to Ivan Corwin for suggesting the problem and providing numerous stimulating discussions. His encouragement and inputs on earlier drafts of the paper have been invaluable. We also thank Evgeni Dimitrov, {Li-Cheng Tsai}, {Yier Lin} and Mark Rychnovsky for helpful conversations and {Pierre Le Doussal and Alexandre Krajenbrink for providing many valuable references to the physics literature}. The authors were partially supported by Ivan Corwin's NSF grant DMS:1811143 as well as the Fernholz Foundation's ``Summer Minerva Fellows" program.

	\section{Proof of Main Results} \label{sec:fraccase} 
	In this section, we give a detailed outline of the proofs of Theorems \ref{thm:frac_mom} and \ref{thm:ldp}. In Section \ref{sec:prop} we collect some useful properties of $\hq$ and $\fq$ functions defined in \eqref{eq:ldp} and \eqref{eq:laplace} respectively. In Section \ref{sec:proof} we complete the proof of Theorems \ref{thm:frac_mom} and \ref{thm:ldp} assuming technical estimates on the leading order term (Proposition \ref{p:leading}) and higher order term (Proposition \ref{p:ho}).

	Throughout this paper, we fix $s>0$ and set $n=\lfloor s\rfloor +1 \ge 1$ and $\alpha=s-\lfloor s\rfloor$ so that $s=n-1+\alpha$.  We also fix $q\in (\frac12,1)$ and set $p=1-q$ and $\tau=p/q \in (0,1)$ for the rest of the article.

	\subsection{Properties of $\hq(x)$ and $\fq(x)$} \label{sec:prop} Recall the Lyapunov exponent $\hq(x)$ defined in \eqref{eq:exp} and the $\fq(x)$ function defined in \eqref{eq:laplace}. The following two propositions investigates various properties of these two functions which are necessary for our later proofs. 

	\begin{proposition}[Properties of $\hq$] \label{p:htau} Consider the function $\hq: (0,\infty) \to \mathbb{R}$ defined by $\hq(x)=(q-p)\frac{1-\tau^{\frac{x}2}}{1+\tau^{\frac{x}2}}$. Then, the following properties hold true:
	\begin{enumerate}[label=(\alph*), leftmargin=15pt]
		\item \label{a} $\bq(x):=\frac{\hq(x)}{x}$ is strictly positive and strictly decreasing with $$\lim_{x\to 0^+} B_q(x)=\tfrac14(p-q)\log \tau>0.$$
		\item \label{b} $\hq$ is strictly subadditive in the sense that for any $x,y\in (0,\infty)$ we have $$\hq(x+y)<\hq(x)+\hq(y).$$
		\item \label{c} $\hq$ is related to $\Phi_{+}$ defined in \eqref{eq:ldp} via the following Legendre-Fenchel type transformation:
		\begin{align*}
			\Phi_{+}(y)=\sup_{s\in \R_{>0}} \left\{s\frac{1-y}{4}\log\tau+\frac1{q-p}\hq(s)\right\}, \quad y\in (0,1).
		\end{align*}
	\end{enumerate}
\end{proposition}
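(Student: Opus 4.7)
The plan is to reduce everything to a single observation: $\hq$ is a hyperbolic tangent in disguise. Setting $c := -\tfrac14\log\tau > 0$ and using $\tanh(y) = (1-e^{-2y})/(1+e^{-2y})$ with $e^{-2y} = \tau^{x/2}$ (i.e.\ $y = cx$), one verifies
\[
\hq(x) = (q-p)\tanh(cx).
\]
With this identity in hand, all three parts become short calculus exercises.

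For part (a), $\bq(x) = (q-p)\tanh(cx)/x$, so strict positivity is immediate from $\tanh(cx) > 0$ for $x > 0$. I would establish strict monotonicity by showing $\varphi(y) := \tanh(y)/y$ is strictly decreasing on $(0,\infty)$: differentiating gives $\varphi'(y) = [y\,\mathrm{sech}^2(y) - \tanh(y)]/y^2$, the numerator vanishes at $y = 0$, and its derivative is $-2y\,\mathrm{sech}^2(y)\tanh(y) < 0$ on $(0,\infty)$, so the numerator is strictly negative there. The limit $\lim_{x\to 0^+}\bq(x) = (q-p)c = \tfrac14(p-q)\log\tau$ then follows from $\tanh(cx)/x \to c$, and is positive since both factors $p-q$ and $\log\tau$ are negative.

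Part (b) follows immediately from (a): for any $x, y > 0$, strict monotonicity of $\bq$ yields $\bq(x+y) < \bq(x)$ and $\bq(x+y) < \bq(y)$, so
\[
\hq(x+y) = x\,\bq(x+y) + y\,\bq(x+y) < x\,\bq(x) + y\,\bq(y) = \hq(x) + \hq(y).
\]

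For part (c), I would substitute the hyperbolic form and change variables. The quantity inside the supremum reads $A(s) := s\tfrac{1-y}{4}\log\tau + \tfrac{1}{q-p}\hq(s) = -c(1-y)s + \tanh(cs)$, so with $u = cs$ the problem becomes maximizing $F(u) := -(1-y)u + \tanh(u)$ over $u > 0$. Since $F''(u) = -2\,\mathrm{sech}^2(u)\tanh(u) < 0$, $F$ is strictly concave on $(0,\infty)$, so its unique critical point is a global maximum. Solving $F'(u) = -(1-y) + \mathrm{sech}^2(u) = 0$ gives $\tanh^2(u) = y$, whence $u^* = \tanh^{-1}(\sqrt{y})$, and plugging back yields $F(u^*) = \sqrt{y} - (1-y)\tanh^{-1}(\sqrt{y}) = \Phi_{+}(y)$. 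No step here is a serious obstacle; the only mild care is in (c), where I would note $F'(0^+) = y > 0$ and $F'(u) \to -(1-y) < 0$ as $u \to \infty$ to confirm that for $y \in (0,1)$ the supremum is attained at an interior point $u^* \in (0,\infty)$.
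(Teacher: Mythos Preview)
Your proof is correct, and the recognition that $\hq(x)=(q-p)\tanh(cx)$ with $c=-\tfrac14\log\tau$ is a clean unifying observation that streamlines all three parts. The paper's argument is more hands-on: for (a) it differentiates $\bq$ directly in the $\tau$-variables; for (c) it likewise computes $g_y'(s)$ and $g_y''(s)$ without the change of variables and solves $\tfrac{1-y}{4}=\tau^{s/2}/(1+\tau^{s/2})^2$ for $s$. These are the same calculus arguments in different coordinates, so for (a) and (c) the difference is largely cosmetic.

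Part (b) is where the two routes genuinely diverge. You deduce subadditivity from the strict monotonicity of $\bq$ established in (a), which is slick and requires no further computation. The paper instead produces an explicit factorization
\[
\hq(x+y)-\hq(x)-\hq(y)=-(q-p)\frac{(1-\tau^{y/2})(1-\tau^{x/2})(1-\tau^{(x+y)/2})}{(1+\tau^{(x+y)/2})(1+\tau^{x/2})(1+\tau^{y/2})}.
\]
This formula is not just decorative: later in the paper (in bounding the higher-order Fredholm terms, Lemma~\ref{l:itmdb}) the authors need a \emph{quantitative} lower bound on $\hq(s-\rho_1)+\hq(\rho_1)-\hq(s)$ that is uniform over $\rho_1$ in a compact subinterval of $(0,s)$, and they read it off directly from this identity. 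Your monotonicity argument also yields such a bound via continuity and compactness, but the explicit formula makes it immediate. So your approach buys elegance and economy; the paper's buys a ready-made quantitative estimate for downstream use.
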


\begin{proof}
	For \ref{a}, first, the positivity of $\bq(x)$ follows from the positivity of $\hq(x).$ To see its growth, taking the derivative of $\bq(x)$ we obtain
	\begin{align}\label{eq:bq}
		\bq'(x)= \frac{(q-p)(-x\tau^{\frac{x}{2}}\log\tau -1 + \tau^x)}{(1 + \tau^{\frac{x}{2}})^2x^2}.
	\end{align}
	Note that the numerator on the r.h.s of (\ref{eq:bq}) is 0 when $x=0$ and its derivative against $x$ is $\tau^{\frac{x}{2}}\log \tau(\tau^{\frac{x}{2}}-\frac{x}{2}\log\tau -1) < 0$ for $x > 0$. Thus $\bq'(x)$ is strictly negative when $x> 0$ and $\bq(x)$ is strictly decreasing for $x>0$. L'H\^opital's rule yields that $\lim_{x\to 0^+}\bq(x) =\hq'(0)= \frac{1}{4}(q-p)\log\tau.$
	
	\medskip
	
	For \ref{b}, direct computation yields 
	\begin{align}\label{eq:diff}
		\hq(x+y)-\hq(x)-\hq(y)  =-(q-p)\frac{(1-\tau^{\frac{y}2})(1-\tau^{\frac{x}2})(1-\tau^{{\frac{x+y}2}})}{(1+\tau^{{\frac{x+y}2}})(1+\tau^{{\frac{x}2}})(1+\tau^{{\frac{y}2}})} < 0.
	\end{align}
	
	Lastly, for part \ref{c}, we fix $y\in (0,1)$ and define \begin{align*}
		g_{y}(s): = s\frac{1-y}{4}\log\tau+\frac1{q-p}\hq(s), \quad s>0.
	\end{align*}
	Direct computation yields $g_{y}'(s) = (\frac{1-y}{4} -\frac{\tau^{\frac{s}{2}}}{(1 + \tau^{\frac{s}{2}})^2})\log \tau$ and $g_y''(s)=\frac{\tau^{\frac{s}{2}}(\tau^{\frac{s}{2}}-1)\log^2\tau}{2(1+\tau^{\frac{s}{2}})^3} <0$. Thus $g_{y}(s)$ is concave on $(0,\infty)$ and hence attains its unique maxima when $g_y'(s)=0$ or equivalently $\frac{1-y}{4} =\frac{\tau^{\frac{s}{2}}}{(1 + \tau^{\frac{s}{2}})^2}.$ The last equation has $s = 2\log_{\tau} (\frac{1 - \sqrt{y}}{1 + \sqrt{y}})$ as the only positive solution and hence it defines the unique maximum. Substituting this $s$ back into $g_{y}(s)$ generates the final result as $\Phi_{+}(y).$
\end{proof}

	\begin{proposition}[Properties of $\fq(\zeta)$] \label{p:etau} Consider the function $\fq:[0,\infty) \to [0,1]$ defined by $\fq(\zeta):=\prod_{n=0}^{\infty}(1+\zeta\tau^n)^{-1}$. Then, the following properties hold true:
		\begin{enumerate}[label=(\alph*), leftmargin=15pt]
			\item \label{fa} $\fq$ is an infinitely differentiable function with $(-1)^n\fq^{(n)}(\zeta) \ge 0$ for all $x>0$. Furthermore, $\norm{\fq^{(n)}}_{\infty}<\infty$ for each $n$.
			\item \label{fb} For each $n\in \Z_{>0}$, and $\alpha\in [0,1)$, $(-1)^n\int_0^{\infty}\zeta^{-\alpha}\fq^{(n)}(\zeta)\d \zeta$ is  positive and finite.
			
			\item \label{fc} All the derivatives of $\fq$ have superpolynomial decay. In other words for any $m, n \in \Z_{\ge 0}$ we have
			$$\sup_{\zeta>0} |\zeta^m\fq^{(n)}(\zeta)| < \infty.$$
		\end{enumerate}
	\end{proposition}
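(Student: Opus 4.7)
The plan is to deduce all three parts from the complete monotonicity of each factor $f_k(\zeta) := (1+\zeta\tau^k)^{-1}$ in the infinite product defining $\fq$, combined with the two $q$-exponential identities
\[
\fq(\zeta) = \sum_{k \ge 0} \frac{(-\zeta)^k}{(\tau;\tau)_k}, \qquad \frac{1}{\fq(\zeta)} = \prod_{k \ge 0}(1+\zeta\tau^k) = \sum_{k\ge 0} \frac{\tau^{k(k-1)/2} \zeta^k}{(\tau;\tau)_k},
\]
where $(\tau;\tau)_k := \prod_{j=1}^k (1-\tau^j)$ and $(\tau;\tau)_\infty$ denotes its positive limit.

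For part \ref{fa}, I would first observe that each $f_k$ is completely monotone on $(0,\infty)$ since $(-1)^j f_k^{(j)}(\zeta) = j!\,\tau^{kj}/(1+\zeta\tau^k)^{j+1} \ge 0$. Finite products of completely monotone functions are completely monotone, so each partial product $\fq^{[N]}(\zeta) := \prod_{k=0}^N f_k(\zeta)$ is smooth on $[0,\infty)$ and satisfies $(-1)^n(\fq^{[N]})^{(n)} \ge 0$. Uniform convergence on compact sets of the series $\sum_{k\ge 0}\log(1+\zeta\tau^k)$ and all of its $\zeta$-derivatives (the $j$-th derivative of the $k$-th term is bounded by $\Con(j)\tau^k$ on any compact subset of $[0,\infty)$) gives $\fq^{[N]} \to \fq$ in $C^\infty$ on compacts, so $\fq$ is smooth and inherits $(-1)^n\fq^{(n)} \ge 0$. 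Moreover, $(-1)^n\fq^{(n)}$ is itself non-increasing in $\zeta$ (its derivative equals $-((-1)^{n+1}\fq^{(n+1)}) \le 0$), so its supremum is attained at $\zeta=0$, where the first $q$-exponential identity evaluates it to $n!/(\tau;\tau)_n < \infty$.

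For part \ref{fc}, the second $q$-exponential identity yields the termwise lower bound $1/\fq(\zeta) \ge \tau^{k(k-1)/2}\zeta^k/(\tau;\tau)_\infty$ for every $k \in \Z_{\ge 0}$, hence the pointwise upper bound $\fq(\zeta) \le (\tau;\tau)_\infty\tau^{-k(k-1)/2}\zeta^{-k}$. Since $k$ is arbitrary, $\fq$ decays super-polynomially. To transfer this to higher derivatives, I would induct on $n$: assuming $|\fq^{(n-1)}(\zeta)| = O(\zeta^{-m})$ for every $m$, exploit the monotonicity of the non-negative function $(-1)^n \fq^{(n)}$ to estimate
\[
\zeta \cdot (-1)^n \fq^{(n)}(2\zeta) \le \int_\zeta^{2\zeta}(-1)^n \fq^{(n)}(u)\,\d u = (-1)^n\big[\fq^{(n-1)}(2\zeta) - \fq^{(n-1)}(\zeta)\big],
\]
whose right-hand side is $O(\zeta^{-m})$ for any $m$ by the inductive hypothesis. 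Hence $|\fq^{(n)}(2\zeta)| = O(\zeta^{-m-1})$, establishing super-polynomial decay of $\fq^{(n)}$ as well.

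Part \ref{fb} is then immediate. Positivity holds because $(-1)^n\fq^{(n)}\ge 0$ is continuous and strictly positive at $\zeta=0$ (equal to $n!/(\tau;\tau)_n$), so positive on a neighborhood of $0$. Finiteness splits at $\zeta = 1$: on $(0,1]$, $\zeta^{-\alpha}$ is integrable since $\alpha < 1$ while $\fq^{(n)}$ is bounded by part \ref{fa}; on $[1,\infty)$, $\zeta^{-\alpha} \le 1$ combines with the super-polynomial decay from part \ref{fc}. The main obstacle throughout is the derivative decay in part \ref{fc}: the power series expansion of $\fq$ has radius $1$ and is useless for large-$\zeta$ asymptotics, so the inductive monotonicity bootstrap drawing on complete monotonicity is what one must rely on to handle arbitrary-order derivatives uniformly.
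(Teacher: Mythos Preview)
Your argument is correct and takes a genuinely different route from the paper's. The paper works through the logarithmic derivative relation $\fq' = \fq \cdot G$ with $G(\zeta) = -\sum_{j\ge 0} \tau^j/(1+\zeta\tau^j)$, and then repeatedly invokes the Leibniz identity $\fq^{(n+1)} = \sum_k \binom{n}{k} \fq^{(n-k)} G^{(k)}$ to induct on the sign of $(-1)^n \fq^{(n)}$, on the sup-norm bound, on the integrability in part~(b), and on the superpolynomial decay in part~(c). You instead recognize the structure as complete monotonicity: products of completely monotone functions are completely monotone, so the sign and smoothness in~(a) come for free once you justify passing to the limit; the sup-norm and the value at $0$ drop out of the Taylor series for $e_\tau(-\zeta)$; and your bootstrap $\zeta\cdot(-1)^n \fq^{(n)}(2\zeta) \le \int_\zeta^{2\zeta}(-1)^n \fq^{(n)}$ is a clean way to propagate decay from order $n-1$ to order $n$ without ever touching the auxiliary function $G$. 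Your approach is more conceptual and makes the complete monotonicity explicit; the paper's is more hands-on but yields slightly more, since the recursion via $G$ produces explicit bounds on each $\|\fq^{(n)}\|_\infty$ in terms of $\|G^{(k)}\|_\infty \le k!/(1-\tau^{k+1})$.

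One small slip: in your lower bound $1/\fq(\zeta) \ge \tau^{k(k-1)/2}\zeta^k/(\tau;\tau)_\infty$, the inequality goes the wrong way, since $(\tau;\tau)_k \ge (\tau;\tau)_\infty$ (the partial products decrease to the infinite one). The termwise bound you actually get from the series is $1/\fq(\zeta) \ge \tau^{k(k-1)/2}\zeta^k/(\tau;\tau)_k$, which suffices because $(\tau;\tau)_k \le 1$; or, bypassing the $q$-series entirely, just use $1/\fq(\zeta) \ge \prod_{j=0}^{k-1}(1+\zeta\tau^j) \ge \tau^{k(k-1)/2}\zeta^k$ directly from the product. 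This does not affect the rest of your argument.
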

	\begin{proof} 
	
			(a)
			Note that $\fq(\zeta)=\prod_{n=0}^{\infty}(1+\zeta\tau^n)^{-1}=(-\zeta;\tau)_{\infty}^{-1}$ where we recall that $(-\zeta;\tau)_{\infty}$ is the  $\tau$-Pochhammer symbol. As $(-\zeta;\tau)_{\infty}$ is analytic  \cite[Corollary A.1.6.]{aar} and nonzero for $\zeta \in [0, \infty),$ its inverse $\fq(\zeta)$ is analytic. 
			
			\smallskip
			
			We next rewrite $\fq(\zeta) = \prod_{n=0}^{\infty}f_n(\zeta),$ where $f_n(\zeta)=(1 + \zeta\tau^n)^{-1}$. Denote $H(\zeta): = \log \fq(\zeta).$ Since each $f_n(\zeta)\in(0,1)$ is analytic for $\zeta \in [0, \infty)$ and the product $\prod_{n=0}^{\infty}f_n(\zeta) \in (0,1) $ converges locally and uniformly,  $H(\zeta)$ is well-defined and  $H(\zeta) = \sum_{n=0}^{\infty}\log f_n(\zeta).$ Given that $|\sum_{n=0}^{\infty}\frac1{f_n(\zeta)}f_n'(\zeta)| = \sum_{n=0}^{\infty}\frac{\tau^n}{(1+\zeta\tau^n)}< \frac{1}{1-\tau},$ we have 
			\begin{equation}\label{eq:deqn}
				H'(\zeta) = \frac{ \fq'(\zeta)}{\fq(\zeta)} = \sum_{n=1}^{\infty}\frac{f_n'(\zeta)}{f_n(\zeta)}=: G(\zeta).
			\end{equation}
			Note that $G(\zeta) = -\sum_{j = 1}^{\infty}\tau^j f_j(\zeta)$ and $|G(\zeta)| < \infty.$ For each $m \in \Z_{> 0}$, let us set $G^{(m)}(\zeta) := -\sum_{j =1}^{\infty}\tau^jf_j^{(m)}(\zeta).$ As $f_j^{(m)}(\zeta) = (-1)^{m} m!\frac{\tau^{mj}}{(1+\xi\tau^j)^{m+1}},$ we obtain $|G^{(m)}(\zeta)| \leq \frac{m!}{1- \tau^{m+1}}< \infty$ converges locally and uniformly.  Induction on $m$ gives us that $G(\zeta)$ is infinitely differentiable and the $m$-th derivative of $G$ is $G^{(m)}$. It follows that  $\fq(\zeta)$ is infinitely differentiable too. In particular, for any finite $n \in \Z_{\ge0}$,  by Leibniz's rule on the relation \eqref{eq:deqn} we obtain \begin{align}\label{eq:leib}
			\fq^{(n+1)}(\zeta) = \sum_{k=0}^n\binom{n}{k}\fq^{(n-k)}(\zeta)G^{(k)}(\zeta).
			\end{align} Observe that $(-1)^{k+1}G^{(k)}$ is positive and finite. As $\fq$ is positive and finite, using \eqref{eq:leib}, induction  gives us that $(-1)^{n}\fq^{(n)}$ is also positive and finite. As $\norm{G^{(m)}}_{\infty}$ and $\norm{\fq}_{\infty}$ are finite, using \eqref{eq:leib}, induction gives us that $\norm{\fq^{(n)}}_{\infty}$ is finite for any $n \in \Z_{\ge0}.$
		
		\medskip
			
			(b) For $\alpha \in [0,1)$, positivity of the integral $(-1)^n\int_0^{\infty}\zeta^{-\alpha}\fq^{(n)}(\zeta)\d\zeta$ follows from part (a). To check the integrability, we first verify the $n = 0$ case. Since $\zeta \geq 0$ and $\tau \in (0,1),$
			\begin{equation*}
			\begin{split}
			0 &<\int_{0}^{\infty} \zeta^{- \alpha} \fq(\zeta) \d\zeta = \int_{0}^{\infty}\zeta^{- \alpha}\prod_{m=0}^{\infty}\frac{1}{1 + \zeta\tau^m}\d\zeta < \int_{0}^{\infty}\zeta^{-\alpha}\frac{1}{1 + \zeta}\d\zeta\\&= \int_{0}^1\zeta^{- \alpha}\frac{1}{1 + \zeta} \d\zeta + \int_1^{\infty}\frac{\d\zeta}{\zeta^{\alpha}(1 + \zeta)}<\int_0^{1} \zeta^{-\alpha} \d\zeta + \int_1^{\infty}\frac{\d\zeta}{\zeta^{\alpha + 1}}< \infty.
			\end{split}
			\end{equation*} When $n > 0$, using \eqref{eq:leib} and the fact the $|G^{(m)}(\zeta)|<\frac{m!}{1-\tau^{m+1}}$, the finiteness of $(-1)^n\int_0^{\infty}\zeta^{-\alpha}\fq^{(n)}(\zeta)\d\zeta$  follows from induction.
		
		\medskip
		
		(c) Clearly for each $m$ we have $\fq(\zeta) \le \frac{1}{(1+\zeta \tau^{m})^{m+1}}$ forcing superpolynomial decay of $\fq$. The superpolynomial decay of higher order derivative now follows via induction using \eqref{eq:leib}. 
	\end{proof}
	
	\subsection{Proof of Theorem \ref{thm:frac_mom} and Theorem \ref{thm:ldp}} \label{sec:proof}
	Recall $H_0(t)$ from \eqref{def:ht}. As explained in Section \ref{sec:ske}, the main idea is to use Lemma \ref{lm:frac_mom} with $U=\tau^{H_0(t)}$ and $F=\fq$ defined in \eqref{eq:laplace}. Observe that Proposition \ref{p:etau} guarantees $F=F_q$ can be chosen in Lemma \ref{lm:frac_mom}. In the following proposition, we show that limiting behavior of $\Ex [\tau^{sH_0(t)}]$ is governed by the  integral in \eqref{eq:int1} restricted to $[1,\infty)$.
	\begin{proposition}\label{p:red} For any $s>0$, we have
		\begin{align}\label{eq:red}
			\lim_{t\to\infty}\frac1t\log \Ex [\tau^{sH_0(t)}]=\lim_{t\to\infty}\frac1t\log \left[(-1)^n\int_1^{\infty} \zeta^{-\alpha}\frac{\d^n}{\d\zeta^n}\Ex[F_q(\zeta \tau^{H_0(t)})]\d \zeta\right],
		\end{align}
	where $n=\lfloor s\rfloor +1 \ge 1$ and $\alpha=s-\lfloor s\rfloor$ so that $s=n-1+\alpha$.
	\end{proposition}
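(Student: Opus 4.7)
The plan is to apply Lemma~\ref{lm:frac_mom} with $U=\tau^{H_0(t)}\in(0,1]$ and $F=\fq$, whose hypotheses are all supplied by Proposition~\ref{p:etau}: $\fq$ is $n$-times differentiable with $\|\fq^{(k)}\|_{\infty}<\infty$ for each $k$, the integer moments of $\tau^{H_0(t)}$ are trivially finite, and the normalizing constant $\calI_n := (-1)^n\int_0^{\infty}\zeta^{-\alpha}\fq^{(n)}(\zeta)\,\d\zeta$ is strictly positive and finite. Setting
\[
 A_t := (-1)^n\int_0^{\infty}\zeta^{-\alpha}\tfrac{\d^n}{\d\zeta^n}\Ex[\fq(\zeta\tau^{H_0(t)})]\,\d\zeta,
\]
Lemma~\ref{lm:frac_mom} gives $A_t = \calI_n\,\Ex[\tau^{sH_0(t)}]$, so $\tfrac1t\log A_t$ and $\tfrac1t\log\Ex[\tau^{sH_0(t)}]$ share the same limiting behaviour. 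Decomposing $A_t = B_t + C_t$ into the integrals over $[0,1]$ and $[1,\infty)$, both pieces are nonnegative by Proposition~\ref{p:etau}(a), so the trivial inequality $C_t \le A_t$ already gives $\limsup_t\tfrac1t\log C_t \le \limsup_t\tfrac1t\log A_t$. The substance of the proposition is the matching lower bound, which amounts to proving $B_t/A_t\to 0$ on the exponential scale.

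To control $B_t$, I apply Fubini (valid since the integrand has a sign by Proposition~\ref{p:etau}(a)) and substitute $u = \zeta\tau^{H_0(t)}$ inside the expectation to obtain
\[
 B_t = (-1)^n\Ex\!\left[\tau^{sH_0(t)}\int_0^{\tau^{H_0(t)}}u^{-\alpha}\fq^{(n)}(u)\,\d u\right].
\]
Bounding $|\fq^{(n)}(u)|\le\|\fq^{(n)}\|_{\infty}$ and integrating yields that the inner integral is at most $\tfrac{\|\fq^{(n)}\|_{\infty}}{1-\alpha}\tau^{(1-\alpha)H_0(t)}$, and since $s+(1-\alpha)=n$ this collapses to the pointwise comparison $B_t \le \Con\,\Ex[\tau^{nH_0(t)}]$.

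The main obstacle is then to show $\Ex[\tau^{nH_0(t)}]$ decays strictly faster than $\Ex[\tau^{sH_0(t)}]$ on log scale, \emph{without} circularly invoking Theorem~\ref{thm:frac_mom}. My plan is a truncation argument at level $M = (\gamma/4-\eta)t$ for a small $\eta>0$: on $\{H_0(t)\ge M\}$ one bounds $\Ex[\tau^{nH_0(t)}\ind_{H_0(t)\ge M}]\le \tau^{(n-s)M}\Ex[\tau^{sH_0(t)}]$, which supplies an explicit exponential separation $\tau^{(n-s)M}=e^{-(n-s)(\gamma/4-\eta)|\log\tau|\,t}\to 0$; on the complementary event one uses $\tau^{nH_0(t)}\le 1$ and the lower-tail estimate $\P(H_0(t)<M)$. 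A crude LLN lower bound $\Ex[\tau^{sH_0(t)}]\ge e^{-\Con t}$ (e.g.\ $\Ex[\tau^{sH_0(t)}]\ge\tfrac12\tau^{s(\gamma/4+1)t}$ via the concentration of $H_0(t)$ around $\gamma t/4$) combined with the one-sided upper-tail LDP bound of \cite{dp} (which is valid for \emph{all} $y\in(0,1)$, regardless of whether the rate is sharp) then shows the lower-tail term is negligible compared to $A_t$. Putting these together yields $B_t/A_t\to 0$, which forces $\liminf_t\tfrac1t\log A_t \le \liminf_t\tfrac1t\log C_t$ and completes the reduction.
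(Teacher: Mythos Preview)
Your setup---applying Lemma~\ref{lm:frac_mom} with $U=\tau^{H_0(t)}$ and $F=\fq$, checking the hypotheses via Proposition~\ref{p:etau}, writing $A_t=\calI_n\,\Ex[\tau^{sH_0(t)}]$, splitting $A_t=B_t+C_t$ over $[0,1]$ and $[1,\infty)$, and noting $C_t\le A_t$ for the upper bound---is exactly what the paper does. The divergence is entirely in how you obtain the lower bound on $C_t$, and there you work far harder than necessary and leave a gap.

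After your Fubini step you have
\[
B_t=\Ex\!\left[\tau^{sH_0(t)}\,J\big(\tau^{H_0(t)}\big)\right],\qquad J(v):=(-1)^n\int_0^{v}u^{-\alpha}\fq^{(n)}(u)\,\d u\ge 0.
\]
You bound $J(v)\le \|\fq^{(n)}\|_{\infty}\,v^{1-\alpha}/(1-\alpha)$, which yields $B_t\le\Con\,\Ex[\tau^{nH_0(t)}]$ and then forces the delicate comparison of the $n$-th and $s$-th moments. The paper instead observes that $U=\tau^{H_0(t)}\le 1$ and simply uses $J(v)\le J(1)$, a \emph{fixed} positive constant strictly smaller than $\calI_n$. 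Equivalently, in the form $C_t=\Ex[\,U^{s}\int_U^{\infty}x^{-\alpha}(-1)^n\fq^{(n)}(x)\,\d x\,]$ one has $\int_U^{\infty}\ge\int_1^{\infty}$, so
\[
C_t\ \ge\ \Big[(-1)^n\!\int_1^{\infty}x^{-\alpha}\fq^{(n)}(x)\,\d x\Big]\Ex[\tau^{sH_0(t)}],
\]
with a strictly positive, $t$-independent prefactor. Taking logarithms and dividing by $t$ finishes the proof in two lines; no truncation, no LLN, no appeal to \cite{dp}, and no comparison of integer and fractional moments.

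Your detour is not only longer but has a genuine gap. To make $\P(H_0(t)<M)$ negligible relative to $A_t$ you need (after inserting your own crude lower bound $\Ex[\tau^{sH_0(t)}]\ge\tfrac12\tau^{s(\gamma/4+\epsilon)t}$) something like $\gamma\,\til\Phi_{+}(4\eta/\gamma)>s\gamma|\log\tau|/4$. For large $s$ the right-hand side is large, so you would need $\til\Phi_{+}(y)\to\infty$ as $y\to 1^-$. But $\til\Phi_{+}$ is the \cite{dp} rate, which this paper explicitly flags as suboptimal beyond $y_0$ (see Remark~1.3 and Figure~\ref{tphi}); you have not verified it diverges, and nothing in your proposal supplies an alternative tail bound for $\P(H_0(t)<M)$ strong enough to beat your crude lower bound uniformly in $s$. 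The single observation $U\le 1$ removes the entire issue.
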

	\begin{proof}
		Let $U=\tau^{H_0(t)}$.  In this proof, we find an upper and a lower bound of $\Ex[U^s]$ and show that as $t \rightarrow \infty,$ after taking logarithm of $\Ex[U^s]$ and dividing by $t$, the two bounds give matching results. Note that as $\tau \in (0,1)$ and $H_0(t) \geq 0$ for any $n\in \Z_{\geq 0}$ and $t > 0,$ $U$ has finite $n$-th moment. By Proposition \ref{p:etau}, $\fq$ is $n$-times differentiable and  $|\int_0^{\infty}x^{-\alpha}\fq^{(n)}(x)\d x| < \infty.$ Denoting $\d\P_U(u)$ as the measure corresponding to the random variable $U$ we have
		\begin{align}
			(-1)^n\int_1^{\infty} \zeta^{-\alpha}\frac{\d^n}{\d\zeta^n}\Ex[F_q(\zeta \tau^{H_0(t)})]\d \zeta & = (-1)^n\int_1^{\infty} \zeta^{-\alpha}\int_0^{\infty} u^n\fq^{(n)}(\zeta u)\d\P_U(u)\d \zeta. \label{eq:sim}
		\end{align}
		The $(-1)^n$ factor ensures that the above quantities are nonnegative via Proposition \ref{p:etau} \ref{fa}. By the finiteness of the $n$-th moment of $U$, $\norm{F_q^{(n)}}_{\infty}<\infty$ (by Proposition \ref{p:etau} \ref{fa}), and Fubini's theorem, we can interchange the integrals and obtain
		\begin{align}
			\mbox{r.h.s~of \eqref{eq:sim}}	& = (-1)^n\int_0^{\infty} u^{n-1+\alpha} \int_1^{\infty} (\zeta u)^{-\alpha} 
			\fq^{(n)}(\zeta u)\d (u\zeta) \d\P_U(u) \nonumber \\ & = (-1)^n\int_0^{\infty} u^{n-1+\alpha} \int_{u}^{\infty} x^{-\alpha} 
			\fq^{(n)}(x)\d x \ \d\P_U(u) \label{eq:sim2}
		\end{align}
		Since the random variable $U\in [0,1]$, we can lower bound the inner integral on the r.h.s.~of \eqref{eq:sim2} by restricting the $x$-integral to $[1,\infty)$. Recalling that $s=n-1+\alpha$ we have  \begin{align}\label{eq:low}
			\mbox{r.h.s.~of \eqref{eq:sim}} \ge (-1)^n\left(\int_1^{\infty} x^{-\alpha}F_q^{(n)}(x)\d x\right)\Ex [\tau^{sH_0(t)}].
		\end{align} 
		As for the upper bound for $\mbox{r.h.s.~of \eqref{eq:sim}}$, we may extend the range of integration to $[0,\infty)$. Apply Lemma \ref{lm:frac_mom} with $F\mapsto F_q$ and $U\mapsto \tau^{sH_0(t)}$ to get
		\begin{align}\label{eq:up}
			\mbox{r.h.s.~of \eqref{eq:sim}} \le (-1)^n\int_0^{\infty} \zeta^{-\alpha}\frac{\d^n}{\d\zeta^n}\Ex\left[
			F_q(\zeta U)\right]\d \zeta  = \left[(-1)^n\int_0^{\infty}\zeta^{-\alpha}\fq^{(n)}(\zeta)\d \zeta\right]\Ex [\tau^{sH_0(t)}] 
		\end{align}
		Noting that both the prefactors in \eqref{eq:low} and \eqref{eq:up} are positive and free of $t$. Taking logarithms and dividing by $t$, we get the desired result.
	\end{proof}

	Next we truncate the integral in r.h.s.~of \eqref{eq:red} further. Recall the function $\bq(x)$ defined in Proposition \ref{p:htau} \ref{a}. We separate the range of integration  $[1,\infty)$ into $[1,e^{t\bq(s/2)}]$ and $(e^{t\bq(s/2)},\infty)$ and make use of the Fredholm determinant formula for $\Ex[\fq(\zeta\tau^{H_0(t)})]$ from Theorem \ref{thm:laplace} to write the integral in r.h.s.~of \eqref{eq:red} as follows.  
	\begin{align} \nonumber
		(-1)^n\int_1^{\infty} \zeta^{-\alpha}\frac{\d^n}{\d\zeta^n}\Ex[F_q(\zeta \tau^{H_0(t)})]\d \zeta & =(-1)^n\int_1^{e^{t\bq(\frac{s}{2})}}  \zeta^{-\alpha}\frac{\d^n}{\d\zeta^n}\Ex[F_q(\zeta \tau^{H_0(t)})]\d \zeta+\mathcal{R}_s(t) \\ & = (-1)^n\int_{1}^{e^{t\bq(\frac{s}{2})}} \zeta^{-\alpha}\frac{\d^n}{\d\zeta^n}\det(I+K_{\zeta,t})\d \zeta +\calR_s(t), \label{eq:diff_fred}
	\end{align}
	where
	\begin{align}\label{eq:calr}
		\mathcal{R}_s(t):=(-1)^n\int_{e^{t\bq(\frac{s}{2})}}^{\infty} \zeta^{-\alpha}\frac{\d^n}{\d\zeta^n}\Ex[F_q(\zeta \tau^{H_0(t)})]\d \zeta
		\end{align}
Recall the definition of Fredholm determinant from \eqref{eq:f-series}. Assuming $\tr(K_{\zeta,t})$ to be differentiable for a moment we may split the first term in \eqref{eq:diff_fred} into two parts and write
\begin{align}\label{eq:sep2}
	(-1)^n\int_{1}^{e^{t\bq(\frac{s}{2})}} \zeta^{-\alpha}\frac{\d^n}{\d\zeta^n}\det(I+K_{\zeta,t})\d \zeta = \calA_s(t)+\calB_s(t)
\end{align}
	where
	\begin{align}\label{eq:cala}
		\calA_{s}(t) & := (-1)^n\int_{1}^{e^{t\bq(\frac{s}{2})}} \zeta^{-\alpha}\frac{\d^n}{\d\zeta^n}\tr( K_{\zeta,t})\,\d\zeta, \\
		\calB_{s}(t) & := (-1)^n\int_{1}^{e^{t\bq(\frac{s}{2})}} \zeta^{-\alpha}\frac{\d^n}{\d\zeta^n}[\det(I+K_{\zeta,t})-\tr( K_{\zeta,t})]\,\d\zeta. \label{eq:Calb}
	\end{align}
	
	The next two propositions verify that both $\mathcal{A}_s(t)$ and $\mathcal{B}_{s}(t)$ are well-defined and we defer their proofs to Sections \ref{sec:leading} and \ref{sec:higher}, respectively. The first one guarantees that $\tr(K_{\zeta,t})$ is indeed infinitely differentiable and provides the asymptotics for $\rl [\calA_s(t)]$.
	\begin{proposition}\label{p:leading}  For each $\zeta>0$, the function $\zeta \mapsto\tr(K_{\zeta,t})$ is infinitely differentiable and thus $\calA_s(t)$ in \eqref{eq:cala} is well defined. Furthermore, for any $s>0$, we have
		\begin{align}\label{eq:lead}
			\lim_{t\to\infty} \log\left( \rl[\calA_{s}(t)]\right) = -\hq(s).
		\end{align}
	\end{proposition}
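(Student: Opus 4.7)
The proposition has two parts. For the infinite-differentiability claim, I would begin from the explicit trace formula obtained by setting $w'=w$ in \eqref{def: ker}:
\begin{equation*}
\tr(K_{\zeta,t})=\frac{1}{2\pi\i}\int_{\C(\tau^{1-\delta/2})}\int_{\delta-\i\infty}^{\delta+\i\infty}\Gamma(-u)\Gamma(1+u)\,\zeta^{u}\,\frac{g_t(w)}{g_t(\tau^{u}w)}\frac{\d u\,\d w}{w(1-\tau^{u})}.
\end{equation*}
The $\zeta$-dependence lives entirely in the entire factor $\zeta^{u}$. Stirling's formula gives exponential decay of $|\Gamma(-u)\Gamma(1+u)|$ in $|\im u|$, while the remaining factors are uniformly bounded on the compact $w$-contour and on the vertical line $\rl u=\delta\in(0,1)$. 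Together they produce, for each compact $J\subset(0,\infty)$, an integrable $(u,w)$-majorant uniform in $\zeta\in J$; dominated convergence then justifies differentiation under both integrals to any order, so $\calA_s(t)$ is well defined.

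For the asymptotics, the plan is to swap the order of integration via Fubini and perform the $\zeta$-integral first. Since $n+\alpha-1=s$, for $u$ on the contour $\rl u=\delta<s$ we obtain
\begin{equation*}
\int_{1}^{e^{t\bq(s/2)}}\zeta^{u-s-1}\,\d\zeta=\frac{e^{t\bq(s/2)(u-s)}-1}{u-s},
\end{equation*}
whose numerator is uniformly bounded and dominated by its constant piece $-1$ up to an exponentially small correction. Thus $\calA_s(t)$ reduces to a double $(u,w)$-contour integral whose only remaining $t$-dependence (at leading exponential order) lives in the ratio $g_t(w)/g_t(\tau^{u}w)=\exp\!\bigl[\,t(q-p)\bigl(\tfrac{1}{1+w/\tau}-\tfrac{1}{1+\tau^{u-1}w}\bigr)\bigr]$.

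A joint stationary-point computation shows that $\partial_w=0$ forces $w=\tau^{1-u/2}$, on which locus the exponent collapses to the simpler expression $-t\hq(u)$. Since $\hq(u)\to\hq(s)$ as $u\to s$, the rate $-\hq(s)$ corresponds to evaluating the $w$-saddle exponent at $u=s$. The plan is therefore to deform the $u$-contour rightward to reach the point $s$, picking up residues at the $\Gamma(-u)$ poles $u=1,\dots,\lfloor s\rfloor$ crossed along the way, and simultaneously to deform the $w$-contour to follow the steepest descent path through $w_\star(u)=\tau^{1-u/2}$. A standard Gaussian expansion around the $w$-saddle for $u$ near $s$ should then produce the asserted leading contribution $e^{-t\hq(s)}$ times polynomial prefactors.

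The principal obstacle, flagged in Section \ref{sec:ske}, is the periodicity of $\tau^{u/2}$ in $\im u$: the saddle curve $w_\star(u)=\tau^{1-u/2}$ revisits the starting circle after each period, so infinitely many stationary points along the $u$-contour achieve the common rate $-t\hq(\rl u)$. I expect the hardest step to be demonstrating that only the central saddle at $\im u=0$ contributes to $\rl[\calA_s(t)]$ at the leading exponential scale; the other saddles must be shown to either cancel after extraction of the real part or to be suppressed using the Stirling decay of $\Gamma(-u)\Gamma(1+u)$ in $|\im u|$. Once the dominance of the central saddle is established, taking $\rl[\cdot]$ annihilates residual oscillatory phases and $t^{-1}\log$ removes the polynomial prefactor, yielding the stated limit $-\hq(s)$.
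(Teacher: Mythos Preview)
Your differentiability argument is fine and is essentially what the paper does in Proposition~\ref{ppn:dkernel}.

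The asymptotic argument has a real gap. The step where you replace $\frac{e^{t\bq(s/2)(u-s)}-1}{u-s}$ by $\frac{-1}{u-s}$, on the grounds that the exponential piece is small for $\rl u=\delta<s$, discards precisely the information that localizes the $u$-integral at $u=s$. After this truncation the only $t$-dependence left is $e^{tf(u,w)}$, whose $w$-saddle value along $\rl u=\delta$ is $-t\hq(\delta)$, not $-t\hq(s)$; nothing in the truncated integrand prefers $u=s$. If instead you keep the full factor and deform to $\rl u=s$, the combination $\frac{e^{\i t\bq(s/2)\im u}-1}{\i\,\im u}$ is of order $t$ at $\im u=0$ but only $O(1/|v_k|)$ at the periodic copies $\im u=v_k$, $k\neq 0$---and \emph{this} factor, not the Stirling decay of $\Gamma(-u)\Gamma(1+u)$, is what singles out the central saddle. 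Stirling decay only makes $\sum_{k}|\Con_0(k)|$ convergent; since every saddle sits at the common exponential rate $-t\hq(s)$, it cannot by itself show that the $k\neq 0$ contributions are subdominant.

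Two smaller corrections. The residues you anticipate at $u=1,\dots,\lfloor s\rfloor$ do not arise: the factor $(u)_n$ produced by $\partial_\zeta^n$ cancels the $\Gamma(-u)$ poles at $1,\dots,n-1$, so the deformation from $\rl u=\delta$ to $\rl u=s$ is pole-free (this is Remark~\ref{r:pole}). And the discarded piece $e^{t\bq(s/2)(u-s)}/(u-s)$ is not an exponentially small correction once $\rl u$ approaches $s$; on $\rl u=s$ its numerator has modulus one.

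For comparison, the paper first deforms both contours ($u$ to $\rl u=s$, $w$ to $\C(\tau^{1-s/2})$), performs the $w$-saddle, and does the $\zeta$-integral \emph{last}: the oscillatory integral $\int_1^{e^{t\bq(s/2)}}\zeta^{-1+\i(y+v_k)}\,\d\zeta$ then produces an explicit $O(t^{-4/5})$ suppression of the $k\neq 0$ saddles relative to $k=0$ (Steps~6--7 of the proof). Your reordering---integrate in $\zeta$ first---can be made to work, but only if you retain the full output of that integral rather than truncating it prematurely.
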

	
	From \eqref{eq:diff_fred}, we know that the Fredholm determinant $\det(I + K_{\zeta, t})$ is infinitely differentiable. Thus, proposition \ref{p:leading} renders $(\det(I +K_{\zeta,t})-\tr(K_{\zeta,t}))$ infinitely differentiable as well. Hence $\mathcal{B}_s(t)$ is well-defined. In fact, we have the following asymptotics for $\mathcal{B}_{s}(t)$.
	\begin{proposition} \label{p:ho} Fix any $s>0$ so that $s-\lfloor s\rfloor>0$. Recall $\calB_s(t)$ from \eqref{eq:Calb}. There exists a constant $\Con=\Con(q,s)>0$ such that for all $t>0$, we have
		\begin{align}\label{eq:ho}
			|\mathcal{B}_s(t)| \le \Con \exp(-t\hq(s)-\tfrac1{\Con}t),
		\end{align}
		where $\hq(s)$ is defined in \eqref{eq:exp}.
	\end{proposition}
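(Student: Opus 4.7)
The plan is to expand the higher-order part of the Fredholm determinant as a series over $L\geq 2$, estimate each term via Hadamard's inequality (following Section~4 of \cite{lin20}), and invoke the strict subadditivity of $\hq$ to produce the claimed exponential gap. Concretely, by \eqref{eq:fdhm} one has
$$\det(I+K_{\zeta,t})-\tr(K_{\zeta,t})-1 = \sum_{L=2}^{\infty}\tr(K_{\zeta,t}^{\wedge L}),$$
and I would interchange $\frac{d^n}{d\zeta^n}$ with this sum and with the defining integrals in \eqref{eq:f-series}, a swap justified a posteriori by the uniform bound described below. Since $\zeta$ enters $K_{\zeta,t}$ in \eqref{def: ker} only through the factor $\zeta^u$, differentiating $n$ times pulls a factor $u(u-1)\cdots(u-n+1)\zeta^{-n}$ into the $u$-integrand, and the Leibniz rule distributes a total of $n$ such derivatives across the $L$ kernel entries in each determinant.

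Next, for each $L\geq 2$ I apply Hadamard's inequality to the $L\times L$ determinant in \eqref{eq:f-series}:
$$\bigl|\det\bigl(\partial_\zeta^{n_{i,j}} K_{\zeta,t}(w_i,w_j)\bigr)_{i,j=1}^L\bigr| \leq L^{L/2} M_n(\zeta,t)^L,$$
where $M_n(\zeta,t)$ is a uniform upper bound on $|\partial_\zeta^m K_{\zeta,t}(w,w')|$ for $0\leq m\leq n$ and $w,w'\in\C(\tau^{1-\frac{\delta}{2}})$. Combined with the Stirling bound $L^{L/2}/L!\leq (e/\sqrt{L})^L$, the sum over $L\geq 2$ converges and is dominated by its $L=2$ term up to a multiplicative constant, giving
$$\left|\frac{d^n}{d\zeta^n}\bigl[\det(I+K_{\zeta,t})-\tr(K_{\zeta,t})\bigr]\right| \leq \Con\, \zeta^{-2n}\, M_n(\zeta,t)^2.$$

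The crucial input is a pointwise bound $M_n(\zeta,t) \leq \Con\, t^{-1/2}\, e^{-t\hq(\sigma(\zeta))}$ valid uniformly for $\zeta\in[1,e^{t\bq(s/2)}]$, where $\sigma(\zeta)$ is defined by $\zeta=e^{t\bq(\sigma(\zeta))}$ (this is well-defined and satisfies $\sigma(\zeta)\geq s/2$ on our range by the strict monotonicity of $\bq$, Proposition~\ref{p:htau}\ref{a}). I would obtain this via the same steepest-descent procedure used in Section~\ref{sec:leading}: deform the vertical $u$-contour to pass through the dominant critical point, reusing the argument that isolates one critical point from the periodic array in the proof of Proposition~\ref{p:leading}. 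Substituting into the $L=2$ bound, integrating against $\zeta^{-\alpha}$ over $[1,e^{t\bq(s/2)}]$ by Laplace's method concentrates the mass near the upper endpoint $\zeta=e^{t\bq(s/2)}$ where $\sigma=s/2$, yielding
$$|\calB_s(t)| \leq \Con\, e^{-2 t\hq(s/2)\,+\,o(t)}.$$
Strict subadditivity of $\hq$ applied with $x=y=s/2$ (Proposition~\ref{p:htau}\ref{b}) gives a strictly positive gap $\eta := 2\hq(s/2)-\hq(s)>0$, whence $|\calB_s(t)| \leq \Con\,\exp(-t\hq(s)-t\eta/2)$, which is the claimed bound.

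The main obstacle is the uniform pointwise bound on $K_{\zeta,t}$ used in the third paragraph. As flagged in Section~\ref{sec:ske}, the ratio $g_t(w)/g_t(\tau^u w)$ is $\tfrac{2\pi\mathbf{i}}{\log\tau}$-periodic in $u$, so the $u$-integrand has infinitely many critical points along $\delta+\mathbf{i}\R$; singling out the dominant one and controlling the others demands the careful contour manipulations developed for Proposition~\ref{p:leading}, but now upgraded to a pointwise estimate that is uniform in $w,w'\in\C(\tau^{1-\frac{\delta}{2}})$ and in $\zeta$ over the full interval $[1,e^{t\bq(s/2)}]$ (where $\sigma(\zeta)$ varies). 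Once that bound is in hand, the remaining steps are bookkeeping via Hadamard, Stirling, and Laplace's method.
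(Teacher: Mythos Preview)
Your high-level plan (expand in $L\ge 2$, apply Hadamard, invoke strict subadditivity of $h_q$) is exactly the paper's, but your execution departs from it at the crucial step and creates an unnecessary difficulty.

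The paper does \emph{not} use any steepest-descent bound on $K_{\zeta,t}$ for the higher-order term. It uses only the crude estimate of Lemma~\ref{l:kdbd}, namely $|K_{\zeta,t}^{(m)}(w,w')|\le \Con\,\zeta^{\rho-m}e^{-th_q(\rho)}$ for any $\rho\in(0,m\vee 1)$, which comes from the one-line inequality $\rl[f(u,w)]\le -h_q(\rho)$ of Lemma~\ref{lem:max}. The key device, which your proposal misses, is to pick \emph{different} real parts $\rho_i$ for the $L$ kernel factors in the Hadamard product so that $\sum_{i=1}^L\rho_i=s$ (see \eqref{eq: delta}). This simultaneously makes the $\zeta$-integrand exactly $\zeta^{-1}$ (so the $\zeta$-integral is only $t\bq(s/2)$, a polynomial factor) and gives the exponential gap directly from $\sum_i h_q(\rho_i)>h_q(s)$ via Proposition~\ref{p:htau}\ref{b}. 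The hypothesis $\alpha=s-\lfloor s\rfloor>0$ is precisely what allows each $\rho_i$ to be chosen strictly positive. The paper then handles large $L$ separately (Step~2 of Lemma~\ref{l:itmdb}) with a fixed $\delta$, since for $L>2(n+1)$ one has $L\delta-s>0$ and the $\zeta$-integral contributes an extra $e^{tL\delta\bq(s/2)}$ that is beaten by $e^{-tLh_q(\delta)}$.

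By contrast, your route bounds every factor by a single $M_n(\zeta,t)$ and then tries to recover the gap via a $\zeta$-dependent steepest-descent estimate. Several issues arise: (i) the displayed bound $\zeta^{-2n}M_n(\zeta,t)^2$ mis-tracks the $\zeta$-powers (after Leibniz the total exponent is $\sum_i(\rho_i-m_i)=L\rho-n$, not $-2n$); (ii) your $\sigma(\zeta)$ defined by $\log\zeta=t\bq(\sigma)$ is not the optimizer of $\rho\mapsto \rho\log\zeta-th_q(\rho)$ (that would be $\log\zeta=th_q'(\rho)$); (iii) a pointwise-in-$(w,w')$ bound with a $t^{-1/2}$ Gaussian prefactor does not follow from the analysis of Proposition~\ref{p:leading}, which is a saddle in the $w$-variable after integrating over the contour, not a bound for fixed $w,w'$; and (iv) a single $\rho<1$ (forced because some $m_i=0$) cannot give $2h_q(\rho)>h_q(s)$ for all $s$ and $q$, so the per-factor contour parameters really must be allowed to differ. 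In short, the ``main obstacle'' you flag is self-imposed; the paper sidesteps it entirely with the per-factor contour choice \eqref{eq: delta} and the elementary bound \eqref{eq: idbd}.
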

Note that Proposition \ref{p:ho} in its current form does not cover integer $s$. We later explain in Section \ref{sec:higher} why $s-\lfloor s \rfloor >0$ is necessary for our proof. However, this does not effect our main results as one can deduce Theorem \ref{thm:frac_mom} for integer $s$ as well via a simple continuity argument, which we present below. Assuming Propositions \ref{p:leading} and \ref{p:ho}, we now complete the proof of Theorem \ref{thm:frac_mom} and Theorem \ref{thm:ldp}.
	
	\begin{proof}[Proof of Theorem \ref{thm:frac_mom}] Fix $s>0$ so that $s-\lfloor s \rfloor >0$. Appealing to  Proposition \ref{p:red} and \eqref{eq:diff_fred} and \eqref{eq:sep2} we see that
		$$\lim_{t\to\infty} \frac1t\log\Ex [\tau^{sH_0(t)}] =\lim_{t\to\infty} \frac1t\log\left[\mathcal{A}_s(t)+\mathcal{B}_s(t)+\mathcal{R}_s(t)\right],$$
		where $\mathcal{A}_s(t)$, $\mathcal{B}_s(t)$,  and $\mathcal{R}_s(t)$ are defined in \eqref{eq:cala}, \eqref{eq:Calb} and \eqref{eq:calr} respectively. For $\mathcal{R}_s(t)$, setting $V=\zeta \tau^{H_0(t)}$ and noting $s = n-1+ \alpha,$ we see that
		\begin{align*}
			|\mathcal{R}_s(t)| = \int_{e^{t\bq(\frac{s}{2})}}^{\infty} \zeta^{-\alpha-n}\Ex\left[|V^n\fq^{(n)}\left(V\right)|\right]\d \zeta \le \left[\sup_{v>0} |v^{n}\fq^{(n)}(v)|\right] s^{-1}\exp(-ts\bq(\tfrac{s}2)).
		\end{align*}
		The fact that $\sup_{v>0} |v^{n}\fq^{(n)}(v)|$ is finite follows from Proposition \ref{p:etau} \ref{fc}.
		Note that $s\bq(\tfrac{s}2)$ is strictly bigger than $\hq(s)=s\bq(s) > 0$ via Proposition \ref{p:htau} \ref{a}. By Proposition \ref{p:leading}, when $t$ is large, we see that  $\rl[\calA_{s}(t)]$ grows like $\exp(-t\hq(s)) > \exp(-ts\bq(\frac{s}{2}))$. Similarly, Proposition \ref{p:ho} shows that $\rl[\calB_s(t)]$ is bounded from above by $\Con\exp(-t\hq(s) -\frac{1}{\Con}t)$ for some constant $\Con = \Con(q,s)$, which is strictly less than $\exp(-t\hq(s))$ for large enough $t$. Indeed for all large enough $t$, we have $$\frac12\rl[\calA_{s}(t)]\le \rl[\calA_{s}(t)+\calB_s(t)+\mathcal{R}_s(t)] \le\frac32\rl[\calA_{s}(t)].$$ Taking logarithms and dividing by $t$, and noting that $\calA_{s}(t)+\calB_s(t)+\mathcal{R}_s(t)$ is always real, we get \eqref{eq:exp} for any noninteger positive $s$. 
		
		To prove \eqref{eq:exp} for positive integer $s$, we fix $s\in \Z_{>0}$. For any $K>2$, observe that as $H_0(t)$ is a non-negative random variable (recall the definition from \eqref{def:ht}) we have
		$$\tau^{(s-K^{-1})H_0(t)} \ge \tau^{sH_0(t)} \ge \tau^{(s+K^{-1})H_0(t)}.$$ 
		 Taking expectations, then logarithms and dividing by $t$, in view of noninteger version of \eqref{eq:exp} we have
		 $$-\hq(s-K^{-1}) \ge \limsup_{t\to\infty} \frac1t\log\Ex[\tau^{sH_0(t)}] \ge \liminf_{t\to\infty} \frac1t\log\Ex[\tau^{sH_0(t)}] \ge -\hq(s+K^{-1}).$$
		Taking $K\to \infty$ we get the desired result for integer $s$.
		\end{proof}
		\begin{proof}[Proof of Theorem \ref{thm:ldp}]
		For the large deviation result, applying Proposition 1.12 in \cite{gl20}, with $X(t)=H_0(t/\gamma)\cdot\log\tau$, and noting the Legendre-Fenchel type identity for $\Phi_{+}(y)$ from Proposition \ref{p:htau} \ref{c}, we arrive at \eqref{eq:ldp}. To prove \eqref{eq:asy}, applying L-H\^opital rule a couple of times we get
		$$\lim_{y\to 0^+}\frac{\Phi_{+}(y)}{y^{3/2}}= \lim_{y\to0^+} \frac{2}{3}\frac{\Phi_{+}'(y)}{\sqrt{y}}= \lim_{x\to0^+} \frac{2}{3}\frac{\tanh^{-1}(x)}{x}=\lim_{x\to 0^+}\frac23\cdot\frac{1}{1-x^2}=\frac{2}{3}.$$
		This completes the proof of the theorem.
	\end{proof}
	
	\section{Asymptotics of the Leading Term} \label{sec:leading}

The goal of this section is to obtain exact asymptotics of $\rl[\mathcal{A}_s(t)]$ defined in \eqref{eq:cala} as $t\to\infty$. Recall the definition of the kernel $K_{\zeta,t}$ from \eqref{def: ker}. We employ a standard idea that the asymptotic behavior of the kernel $K_{\zeta,t}$ and its `derivative' (see \eqref{def: kerdrv}) and subsequently that of $\rl[\mathcal{A}_s(t)]$ can be derived by the \textit{steepest descent method}. 

Towards this end, we first collect all the technical estimates related to the kernel $K_{\zeta,t}$ in Section \ref{sec:tech} and go on to complete the proof of Proposition \ref{p:leading} in Section \ref{sec:lead}. 

\subsection{Technical estimates of the Kernel} \label{sec:tech}

In this section, we analyze the kernel $K_{\zeta,t}$. Much of our subsequent analysis boils down to understanding the function $g_t(z)$, defined in \eqref{def: ker}, that appears in the kernel $K_{\zeta,t}$. Towards this end, we consider 
\begin{align}\label{eq:contour_fn}
	f(u,z):=\frac{(q-p)}{1+\frac{z}{\tau}}-\frac{(q-p)}{1+\frac{\tau^uz}{\tau}},
\end{align}	
so that the ratio $\frac{g_t(z)}{g_t(\tau^uz)}$ that appears in the kernel $K_{\zeta,t}$ defined in \eqref{def: ker} equals  to $\exp\left(tf(u,z)\right)$. Below we collect some useful properties of this function $f(u,z)$. First note that  $\partial_{z}f(u,z)=0$ has two solutions $z=\pm \tau^{1-\frac{u}2}$, and 
\begin{align}\label{eq;deri}
	\partial_{z}^2f(u,z)\big\vert_{z =-\tau^{1-\frac{u}2}}=-2(q-p)\frac{\tau^{\frac{3u}2-2}+\tau^{2u-2}}{(1-\tau^{\frac{u}2})^3}, \ \ 
	\partial_{z}^2f(u,z)\big\vert_{z =\tau^{1-\frac{u}2}}=2(q-p)\frac{\tau^{\frac{3u}2-2}-\tau^{2u-2}}{(1+\tau^{\frac{u}2})^3}.
\end{align}
The following lemma tells us how the maximum of $\rl [f(u,z)]$ behaves. 
\begin{lemma}\label{lem:max} Fix $\rho>0$. For any $u\in \mathbb{C}$, with $\rl [u]=\rho$ and $z \in \C(\tau^{1-\frac{\rho}2})$, we have \begin{align} \label{eq:ineq}
		\rl [f(u,z)]\le f(\rho,\tau^{1-\frac{\rho}2})=-\hq(\rho)
	\end{align}  where $\hq(\rho)$ is defined in \eqref{eq:exp} and $\C(\tau^{1-\frac{\rho}2})$ is the circle with center at the origin and radius $\tau^{^{1-\frac{\rho}2}}$. Equality in \eqref{eq:ineq} holds if and only if $\tau^{\i \im u}=1$, and $z=\tau^{1-\frac{\rho}2}$ simultaneously.  Furthermore, for the same range of $u$ and $z$, we have the following inequality:
	\begin{align}\label{max:ineq}
		f(\g,\tau^{1-\frac{\rho}2})-\rl [f(u,z)] \ge \frac{(q-p)(1-\tau^{\frac\g2})\tau^{\frac{\rho}2}}{4(1+\tau^{\frac{\rho}2})^2}(2\tau^{\frac{\rho}2-1}|z-\tau^{1-\frac{\rho}2}|+|\tau^{\i \im u}-1|).
	\end{align}
\end{lemma}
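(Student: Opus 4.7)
My plan is to parametrize everything on the circle $\C(\tau^{1-\rho/2})$ and the vertical line $\rl[u]=\rho$. Write $z=\tau^{1-\rho/2}e^{\i\theta}$ with $\theta\in(-\pi,\pi]$ and $u=\rho+\i\sigma$ with $\sigma\in\R$. Introduce $a:=\tau^{-\rho/2}>1$, $b:=\tau^{\rho/2}<1$ (so $ab=1$), and $\phi:=\theta+\sigma\log\tau$. Substituting into \eqref{eq:contour_fn} and taking real parts yields
\begin{align*}
\rl[f(u,z)]=(q-p)\bigl[R_1(\theta)-R_2(\phi)\bigr], \quad R_1(\theta):=\tfrac{1+a\cos\theta}{1+2a\cos\theta+a^2},\ R_2(\phi):=\tfrac{1+b\cos\phi}{1+2b\cos\phi+b^2}.
\end{align*}

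A short computation shows $\tfrac{dR_1}{d(\cos\theta)}=\tfrac{a(a^2-1)}{(1+2a\cos\theta+a^2)^2}>0$ (since $a>1$) while $\tfrac{dR_2}{d(\cos\phi)}=\tfrac{b(b^2-1)}{(1+2b\cos\phi+b^2)^2}<0$ (since $b<1$). Hence $R_1-R_2$ is maximized exactly at $\cos\theta=\cos\phi=1$, equivalently $z=\tau^{1-\rho/2}$ and $\sigma\log\tau\equiv0\pmod{2\pi}$, i.e., $\tau^{\i\im u}=1$ simultaneously. Direct simplification using $ab=1$ gives that the maximum equals $(q-p)\bigl[\tfrac{1}{1+a}-\tfrac{1}{1+b}\bigr]=-(q-p)\tfrac{1-\tau^{\rho/2}}{1+\tau^{\rho/2}}=-\hq(\rho)=f(\rho,\tau^{1-\rho/2})$. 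This establishes \eqref{eq:ineq} along with the stated equality case.

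For the quantitative bound \eqref{max:ineq}, I would factor
\begin{align*}
f(\rho,\tau^{1-\rho/2})-\rl[f(u,z)]=(q-p)\bigl[\Delta_1+|\Delta_2|\bigr],
\end{align*}
where elementary algebra yields $\Delta_1=R_1(0)-R_1(\theta)=\tfrac{a(a-1)(1-\cos\theta)}{(1+a)(1+2a\cos\theta+a^2)}$ and $|\Delta_2|=R_2(\phi)-R_2(0)=\tfrac{b(1-b)(1-\cos\phi)}{(1+b)(1+2b\cos\phi+b^2)}$. Using $ab=1$, $\Delta_1$ rewrites as $\tfrac{b(1-b)(1-\cos\theta)}{(1+b)(1+2b\cos\theta+b^2)}$, so both pieces share the common prefactor $\tfrac{b(1-b)}{1+b}$. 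Bounding each denominator via $1+2b\cos\alpha+b^2\le(1+b)^2$ gives the clean estimate
\begin{align*}
\Delta_1+|\Delta_2|\ge\tfrac{b(1-b)}{(1+b)^3}\bigl[(1-\cos\theta)+(1-\cos\phi)\bigr].
\end{align*}
Finally, I would use the identities $1-\cos\alpha=\tfrac{1}{2}|e^{\i\alpha}-1|^2$, $|z-\tau^{1-\rho/2}|=2\tau^{1-\rho/2}|\sin(\theta/2)|$, and the reverse triangle inequality $|e^{\i\phi}-1|+|e^{\i\theta}-1|\ge|e^{\i\sigma\log\tau}-1|=|\tau^{\i\im u}-1|$ (which follows from $\phi=\theta+\sigma\log\tau$) to transfer the estimate from the intermediate variable $\phi$ to the target quantity $\im u$ appearing in \eqref{max:ineq}.

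The principal technical obstacle is that the natural bound produced above is quadratic in $|\sin(\theta/2)|$ and $|\sin(\phi/2)|$, while the right-hand side of \eqref{max:ineq} is linear in $|z-\tau^{1-\rho/2}|$ and $|\tau^{\i\im u}-1|$. To overcome this mismatch, I would split into cases based on the relative sizes of $|\sin(\theta/2)|$ and $|\sin(\sigma\log\tau/2)|$, exploiting the uniform upper bound on these quantities on the contour so that the quadratic estimate can be downgraded to a linear one up to a universal multiplicative constant. Careful bookkeeping of constants through this chain of estimates should then produce the precise prefactor $\tfrac{(1-\tau^{\rho/2})\tau^{\rho/2}}{4(1+\tau^{\rho/2})^2}$ stated in \eqref{max:ineq}.
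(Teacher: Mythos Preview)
Your treatment of \eqref{eq:ineq} and its equality case is correct, and it takes a somewhat different route from the paper: you split $\rl[f(u,z)]$ as $(q-p)(R_1(\theta)-R_2(\phi))$ and use monotonicity of $R_1,R_2$ in $\cos\theta,\cos\phi$, whereas the paper writes $\rl[f]$ as a single fraction $\tfrac{(q-p)(\tau^\rho-1)(A+B)}{2AB}$ and applies AM--GM followed by $\sqrt{AB}\le(1+\tau^{\rho/2})^2$. Both arguments are elementary; yours makes the equality case especially transparent.

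For \eqref{max:ineq}, however, your ``principal technical obstacle'' is real and your proposed resolution cannot work. The downgrade you describe---using a uniform upper bound on $|\sin(\theta/2)|,|\sin(\phi/2)|$ to convert a quadratic estimate into a linear one---goes in the wrong direction: an upper bound $x\le M$ gives $x^2\le Mx$, not $x^2\ge cx$. A linear lower bound of the form \eqref{max:ineq} is in fact false as stated. Take $\im u=0$ (so $\phi=\theta$) and $\theta\neq 0$ small. Your exact identity gives
\[
f(\rho,\tau^{1-\rho/2})-\rl[f(u,z)]
=(q-p)\,\frac{2b(1-b)}{1+b}\cdot\frac{1-\cos\theta}{1+2b\cos\theta+b^2}
\sim \frac{(q-p)b(1-b)}{(1+b)^3}\,\theta^2,
\]
while the right-hand side of \eqref{max:ineq} equals $\tfrac{(q-p)(1-b)b}{4(1+b)^2}\cdot 2|e^{\i\theta}-1|\sim \tfrac{(q-p)(1-b)b}{2(1+b)^2}\,|\theta|$, which dominates for small $\theta$. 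So no amount of ``careful bookkeeping'' will produce the stated linear prefactor. The paper's proof has the same gap: the step
\[
\frac{1-\tau^{\rho/2}}{|1+\tau^{\rho/2}e^{\i\theta}|}-\frac{1-\tau^{\rho/2}}{1+\tau^{\rho/2}}
\;\ge\;\frac{(\tau^{\rho/2}-\tau^\rho)\,|e^{\i\theta}-1|}{(1+\tau^{\rho/2})^2}
\]
fails for small $\theta$ (the left side is $O(\theta^2)$, the right side is $O(|\theta|)$). What both your computation and the paper's actually yield is a \emph{quadratic} lower bound
\[
f(\rho,\tau^{1-\rho/2})-\rl[f(u,z)]
\;\ge\;\frac{(q-p)(1-\tau^{\rho/2})\tau^{\rho/2}}{2(1+\tau^{\rho/2})^3}\bigl(|e^{\i\theta}-1|^2+|e^{\i\phi}-1|^2\bigr),
\]
which is the correct replacement for \eqref{max:ineq}.
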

\begin{proof} Set $u=\rho+\i y$ and $z=\tau^{1-\frac{\rho}2}e^{\i\theta}$ with $x\in \R$ and $\theta\in [0,2\pi]$. Note that $f(\rho,\tau^{1-\frac{\rho}2})=-\hq(\rho)$, where $\hq(x)$ is defined in \eqref{eq:exp}. Direct computation yields
	\begin{align}\label{eq:rl}
		\rl[f(u,z)]= \frac{(q-p)(\tau^{\g}-1)(|1+\tau^{\frac\g2}e^{-\i\theta}|^2+|1+\tau^{\frac\g2+\i y}e^{\i\theta}|^2)}{2|1+\tau^{\frac\g2}e^{-\i\theta}|^2|1+\tau^{\frac\g2+\i y}e^{\i\theta}|^2}.
	\end{align}
	Since $\tau<1$, applying the inequality $|1+\tau^{\frac\g2}e^{-\i\theta}|^2+|1+\tau^{\frac\g2+\i y}e^{\i\theta}|^2 \ge 2|1+\tau^{\frac\g2}e^{-\i\theta}||1+\tau^{\frac\g2+\i y}e^{\i\theta}|,$ and then noting that  $|1+\tau^{\frac\g2}e^{-\i\theta}||1+\tau^{\frac\g2+\i y}e^{\i\theta}|\le (1+\tau^{\frac\g2})^2$, we see $(\mbox{r.h.s.~of \eqref{eq:rl}}) \le -(q-p)\frac{1-\tau^{\frac{\rho}2}}{1+\tau^{\frac{\rho}2}}$. Clearly equality holds if and only if $\theta=0$ and $\tau^{\i y}=1$ simultaneously.  Furthermore, following the above inequalities, we have $\rl[f(\rho+\i y,z)] \le -(q-p)\frac{1-\tau^{\frac\rho2}}{|1+\tau^{\frac\rho2}e^{\i\theta}|}$ and $\rl[f(\rho+\i y,z)] \le -(q-p)\frac{1-\tau^{\frac\rho2}}{|1+\tau^{\frac\rho2+\i y}e^{\i\theta}|}$. This yields
	\begin{equation}\label{eq:fr}
		\begin{aligned}
			f(\rho,\tau^{1-\frac{\rho}2})-\rl [f(\rho+\i y,z)] & \ge (q-p)\left[\frac{1-\tau^{\frac{\rho}2}}{|1+\tau^{\frac\rho2}e^{\i\theta}|}-\frac{1-\tau^{\frac{\rho}2}}{1+\tau^{\frac{\rho}2}}\right] \hspace{-0.07cm} \ge \hspace{-0.07cm} \frac{(q-p)(\tau^{\frac{\rho}2}-\tau^\rho)|e^{\i\theta}-1|}{(1+\tau^{\frac{\rho}2})^2}
		\end{aligned}
	\end{equation}
	and
	\begin{align*} 
		f(\rho,\tau^{1-\frac\rho2})-\rl [f(\rho+\i y,z)] & \ge (q-p)\left[\frac{1-\tau^{\frac{\rho}2}}{|1+\tau^{\frac\rho2+\i y}e^{\i\theta}|}-\frac{1-\tau^{\frac{\rho}2}}{1+\tau^{\frac{\rho}2}}\right]   \ge \frac{(q-p)(1-\tau^{\frac\g2})\tau^{\frac{\rho}2}|\tau^{\i y}e^{\i\theta}-1|}{(1+\tau^{\frac{\rho}2})^2}.		
	\end{align*}
	Adding the above two inequalities we have $f(\rho,\tau^{1-\frac\rho2})-\rl [f(\rho+\i y,z)] \ge \frac{(q-p)(1-\tau^{\frac\g2})\tau^{\frac\rho2}|\tau^{\i y}-1|}{2(1+\tau^{\frac\rho2})^2}$. Combining this with \eqref{eq:fr} and the substitution $ \tau^{1-\frac{\rho}{2}}e^{\i \theta}=z$ we get \eqref{max:ineq}.
	This completes the proof.
\end{proof}

Using the above technical lemma we can now explain the proof of Theorem \ref{thm:laplace}.

\begin{proof}[Proof of Theorem \ref{thm:laplace}] Due to Theorem 5.3 in \cite{bcs}, the only thing that we need to verify is  
	\begin{align} \label{eq:cond}
		\inf_{\substack{w,w'\in \C({\tau^{1-\frac{\delta}2}})\\ u\in \delta+\i\R}} |w'-\tau^uw|>0\quad \mbox{and} \quad \sup_{\substack{w,w'\in \C({\tau^{1-\frac{\delta}2}})\\ u\in \delta+\i\R}}  \left|\frac{g_t(w)}{g_t(\tau^uw)}\right|>0.
	\end{align}
	Indeed, for every $u\in \delta+\i \R$ and $w,w'\in \C(\tau^{1-\frac{\delta}2})$, we have $|w'-\tau^uw| \ge |w'|-|\tau^uw|=\tau^{1-\frac{\delta}2}-\tau^{1+\frac{\delta}2}>0$.  Recall $f(u,z)$ from \eqref{eq:contour_fn}.  Applying Lemma \ref{lem:max} with $\rho\mapsto \delta$ yields $$\left|\frac{g_t(w)}{g_t(\tau^uw)}\right|=|\exp(tf(u,w))| =\exp(t\rl [f(u,w)]) \le \exp(t f(\delta,\tau^{1-\frac\delta2}))=\exp(-t\hq(\delta)),$$
	where $\hq$ is defined in \eqref{eq:exp}. This verifies \eqref{eq:cond} and completes the proof. 
\end{proof}

\begin{remark} We now explain our choice of the contour $K_{\zeta,t}$ defined in \eqref{def: ker}, which comes from the method of steepest descent. Suppose $\rl[u]=\delta$. As noted before, directly taking derivative of  $f(u,z)= \exp(\frac{g_t(z)}{g_t(\tau^u z)})$, with respect to $z$ suggests that critical points are at $z = \pm\tau^{1 -\frac{u}{2}}$, and thus we take our contour to be $\C(\tau^{1-\frac{\delta}{2}}),$ so that it passes through the critical points. 
\end{remark}
Next we turn to the case of differentiability of $\tr(K_{\zeta,t})$ where  $K_{\zeta, t}$ is defined in (\ref{def: ker}). Using the function $f$ defined in \eqref{eq:contour_fn}, we rewrite the kernel as follows.
$$K_{\zeta,t}(w, w')= \frac{1}{2\pi \i}\int_{\delta-\i \infty}^{\delta + \i \infty}\Gamma(-u)\Gamma(1+u)\zeta^u e^{tf(u,w)}\frac{\d u}{w'-\tau^u w}.$$

Differentiating the integrand inside the integral in $K_{\zeta, t}(w. w')$ $n$-times defines a sequence of kernel $\{K_{\zeta, t}^{(n)}\}_{n \ge 1}: L^2(\C(\tau^{1-\frac{\delta}{2}})) \rightarrow L^2(\C(\tau^{1-\frac{\delta}{2}}))$ given by the kernel:
\begin{align}\label{def: kerdrv}
	K_{\zeta,t}^{(n)}(w,w') := \frac1{2\pi\i}\int_{\delta-\i\infty}^{\delta+\i\infty}\Gamma(-u)\Gamma(1+u)(u)_n\zeta^{u-n} e^{t f(u,w)}\frac{\d u}{w'-\tau^uw},
\end{align} where $(a)_n:=\prod_{i=0}^{n-1}(a-i)$ for $n \in \Z_{>0}$ and $(a)_0 = 1$ is the Pochhammmer symbol and $\delta\in (0,1)$. We also set $K_{\zeta,t}^{(0)}:=K_{\zeta,t}$. 
\begin{remark}  We remark that unlike Lemma 3.1 in \cite{dt19}, we do not aim to show that $K_{\zeta,t}$ is differentiable as an operator, or its higher order derivatives are equal to the operator $K_{\zeta,t}^{(n)}$. Indeed, showing convergence in the trace class norm is more involved because of the lack of symmetry and positivity of the operator $K_{\zeta,t}$. However, since we are dealing with the Fredholm determinant series only, for our analysis it is enough to investigate how each term of the series are differentiable and how their derivatives are related to $K_{\zeta,t}^{(n)}$.
\end{remark}
\begin{remark}\label{r:pole}
	Note that when viewing $K_{\zeta,t}^{(n)}$ as a complex integral, we can  deform its $u$-contour to $\g+\i\R$ for any $\g \in(0, n\vee 1)$. This is due to the analytic continuity of the integrand as the factor $(u)_n$ removes the poles at $ 1, \ldots, n-1$ of $\Gamma(-u).$
\end{remark}
The following lemma provides estimates of $K^{(n)}_{\zeta, t}$ that is useful for the subsequent analysis in Sections \ref{sec:leading} and \ref{sec:higher}.
\begin{lemma} \label{l:kdbd}
	Fix  $n \in \Z_{\ge 0}, t>0,\delta, \rho \in (0,n\vee 1),$ and consider any borel set $A\subset \R$. Recall $h_q(x)$ and $\bq(x)$ from Proposition \ref{p:htau} and $K_{\zeta,t}^{(n)}$ from \eqref{def: kerdrv}. For any $w\in \C(\tau^{1-\frac{\delta}{2}})$ and $w'\in \mathbb{C}$ and $\zeta \in [1,e^{t\bq(\frac{s}2)}]$, there exists a constant $\Con = \Con(n, \delta, q)>0$ such that whenever $|w'|\neq \tau^{1+\frac\delta2}$ we have
	\begin{equation}\label{eq: idbd}
		\begin{aligned}
			\int_{A}\left|\frac{(\delta +\i y)_n \zeta^{\rho -n +\i y}}{\sin(-\pi(\delta + \i y))}e^{tf({\delta+\i y}, w)}\right|\frac{\d y }{|w'-\tau^{\delta+\i y}w|} &
			\leq \frac{\Con\zeta^{\rho - n }}{||w'|-\tau^{1+\frac\delta2}|}\exp(t\cdot \sup_{y\in A} { \rl [f({\delta+\i y}, w)]}) \\ & \leq \frac{\Con\zeta^{\rho - n }}{||w'|-\tau^{1+\frac\delta2}|}\exp(-t\hq(\delta)).
		\end{aligned}.
	\end{equation}
	In particular when $w'\in \C(\tau^{1-\frac{\delta}2})$ we have
	\begin{equation} \label{eq: kdbd}
		|K_{\zeta, t}^{(n)}(w, w')|%\le \frac{1}{2\pi}\int_{\R}\left|\frac{(\delta +\i y)_n \zeta^{\delta -n +\i y}}{\sin(-\pi(\delta + \i y))}e^{tf({\delta+\i y}, w)}\right|\frac{\d y }{|w'-\tau^{\delta+\i y}w|} 
		\leq \Con \zeta^{\delta - n }\exp(-t \hq(\delta)).
	\end{equation}
	Consequently, $K_{\zeta,t}^{(n)}(w, w')$ is continuous  in the $\zeta$-variable.
\end{lemma}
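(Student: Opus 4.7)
The plan is to estimate the integrand of $K^{(n)}_{\zeta,t}$ on the contour $\rl[u]=\delta$ factor by factor. The main technical input is that, along $\rl[u]=\delta$, the factor $1/|\sin(-\pi u)|$ decays exponentially in $|\im u|$ while the Pochhammer factor $|(u)_n|$ grows only polynomially, so the $y$-integral converges with a bound depending only on $n$ and $\delta$. Everything else is a direct modulus computation combined with Lemma \ref{lem:max}.

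First, I would apply Euler's reflection formula $\Gamma(-u)\Gamma(1+u)=\pi/\sin(-\pi u)$ to rewrite the integrand of $K^{(n)}_{\zeta,t}$ as $\pi(u)_n\zeta^{u-n}e^{tf(u,w)}\bigl[\sin(-\pi u)(w'-\tau^u w)\bigr]^{-1}$, producing exactly the modulus structure on the left-hand side of \eqref{eq: idbd}. Writing $u=\delta+\i y$, each factor is bounded separately: since $\zeta>0$ is real, $|\zeta^{\rho-n+\i y}|=\zeta^{\rho-n}$ pulls out; the exponential factor satisfies $|e^{tf(\delta+\i y,w)}|=e^{t\,\rl[f(\delta+\i y,w)]}\le\exp\bigl(t\sup_{y\in A}\rl[f(\delta+\i y,w)]\bigr)$; and the triangle inequality combined with $|w|=\tau^{1-\delta/2}$ and $|\tau^{\delta+\i y}|=\tau^\delta$ yields $|w'-\tau^{\delta+\i y}w|\ge\bigl||w'|-\tau^{1+\delta/2}\bigr|$, which is nonzero by hypothesis. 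The remaining factor $\int_{\R}|(\delta+\i y)_n|/|\sin(-\pi(\delta+\i y))|\,\d y$ is finite with a constant depending only on $n,\delta$: $|(\delta+\i y)_n|\le\prod_{k=0}^{n-1}(|\delta-k|+|y|)$ is polynomial in $|y|$, while the identity $|\sin(\pi(\delta+\i y))|^{2}=\sin^{2}(\pi\delta)+\sinh^{2}(\pi y)$ forces exponential growth and non-vanishing in the denominator (using $\delta\in(0,n\vee 1)$, so $\sin(\pi\delta)\neq 0$ when $n=0$, and the $(u)_n$ factor in the numerator absorbs zeros of $\sin$ at integer $\delta$ when $n\ge 1$). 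Combining these yields the first inequality in \eqref{eq: idbd}, and the second follows immediately from Lemma \ref{lem:max}, which gives $\sup_{y\in A}\rl[f(\delta+\i y,w)]\le f(\delta,\tau^{1-\delta/2})=-\hq(\delta)$.

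To obtain \eqref{eq: kdbd}, I would specialize to $A=\R$, $\rho=\delta$, and $w'\in\C(\tau^{1-\delta/2})$. On this circle $\bigl||w'|-\tau^{1+\delta/2}\bigr|=\tau^{1-\delta/2}-\tau^{1+\delta/2}>0$ is a positive constant depending only on $\delta$ and $q$ (through $\tau=p/q$), so it can be absorbed into $\Con$, producing the claimed $\Con\zeta^{\delta-n}e^{-t\hq(\delta)}$. For continuity of $\zeta\mapsto K^{(n)}_{\zeta,t}(w,w')$, I would fix any compact neighborhood $I\subset(0,\infty)$ of a given $\zeta_0$. The integrand is continuous in $\zeta$ for each fixed $u$, and specializing \eqref{eq: idbd} with $\rho=\delta$ and replacing $\zeta^{\delta-n}$ by $\sup_{\zeta\in I}\zeta^{\delta-n}<\infty$ furnishes an integrable dominating function of $u$ that is independent of $\zeta\in I$; the dominated convergence theorem then gives continuity.

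The main obstacle is the uniform $y$-integrability estimate, which rests on the identity $|\sin(a+\i b)|^2=\sin^2 a+\sinh^2 b$ together with the polynomial bound on $(u)_n$; these must be combined carefully so that the resulting constant depends only on $n,\delta,q$, as asserted. All other steps are routine modulus manipulations that feed the key inequality of Lemma \ref{lem:max}.
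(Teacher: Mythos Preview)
Your proposal is correct and follows essentially the same approach as the paper: factor-by-factor modulus bounds on the integrand, polynomial growth of $|(u)_n|$ against exponential decay of $1/|\sin(-\pi u)|$ (with the Pochhammer factor canceling the sine zero at integer $\delta$), the reverse triangle inequality for $|w'-\tau^{u}w|$, and Lemma \ref{lem:max} for the second inequality. The only minor difference is that the paper proves continuity via the explicit estimate $|K_{\zeta_2,t}^{(n)}-K_{\zeta_1,t}^{(n)}|\le \Con|\zeta_2^{\delta-n}-\zeta_1^{\delta-n}|e^{-t\hq(\delta)}$ rather than dominated convergence; this Lipschitz-type bound is reused later in the proof of Proposition \ref{p:trL}, so it is worth noting even though your DCT argument suffices for the lemma as stated.
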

\begin{proof}
	Fix $n\in \Z_{\ge 0}, t>0,$ $\delta,\rho\in (0,n\vee 1)$ and $w \in \mathfrak{C}(\tau^{1- \frac{\delta}{2}})$ and $w'\in \mathbb{C}$ such that $|w'|\neq \tau^{1+\frac\delta2}$. Throughout the proof the constant $\Con>0$ depends on $n,\delta,$ and $q$ -- we will not mention it further.
	
	\smallskip
	
	Consider the integral on the r.h.s.~of \eqref{eq: idbd}. Observe that when $\delta\notin \Z$, $|(\delta + \i y)_n|\leq \Con|y|^n$  and $\frac{1}{|\sin(-\pi(\delta + \i y))|} \leq \Con e^{-|y|/\Con}$.  For $n\ge 2$, and $\delta \in \Z_{>0}\cap (0,n)$, we observe that the product $(\delta +\i y)_n$ contains the term $\i y$. Hence $|\frac{\i y}{\sin(-\pi(\delta +\i y))}| = |\frac{\i y}{\sin(-\pi(\i y))}| \le \Con e^{-|y|/\Con}$ for such an integer $\delta$. Whereas, $|\frac{\delta+\i y}{\i y}| \le \Con |y|^{n-1}$ for such an integer $\delta$. Finally, $|w'-\tau^{\delta+\i y}w| \geq  ||w'| - |\tau^{\delta}w||=||w'|-\tau^{1+\frac\delta2}|$. Combining the aforementioned estimates, we obtain that
	\begin{align*}
		\mbox{r.h.s. of }(\ref{eq: idbd})\leq \int_{A}\Con |y|^n e^{- |y|/\Con}\zeta^{\rho - n}|e^{tf({\delta+\i y}, w)}|\frac{\d y}{||w'| - \tau^{1+\frac\delta2}|}.
	\end{align*}
	Since $\int_{\R}|y|^ne^{-|y|/\Con}\d y$ converges applying $|e^{tf({\delta+\i y}, w)}| \le  e^{t\rl [f({\delta+\i y}, w)]}$ we arrive at the first inequality in \eqref{eq: idbd}. The second inequality follows by observing $\rl [f({\delta+\i y},w)] \le -\hq(\delta)$ by Lemma \ref{lem:max}.

	Recall $K_{\zeta,t}^{(n)}$ from \eqref{def: kerdrv}. Recall from Remark \ref{r:pole} that the $\delta$ appearing in \eqref{def: kerdrv} can be chosen in $(0, n\vee 1)$. Pushing the absolute value sign inside the explicit formula in \eqref{def: kerdrv} and applying Euler's reflection principle with change of variables $u = \delta + \i y$  yield
	\begin{equation*}
		|K_{\zeta,t}^{(n)}(w,w')| \le \frac{1}{2\pi}\int_{\R}\left|\frac{(\delta +\i y)_n \zeta^{\delta -n +\i y}}{\sin(-\pi(\delta + \i y))}e^{tf({\delta+\i y}, w)}\right|\frac{\d y }{|w' - \tau^{{\delta}+ \i y}w|} .
	\end{equation*}
	\eqref{eq: kdbd} now follows from \eqref{eq: idbd} by taking $\rho=\delta$. To see the continuity of $K_{\zeta,t}^{(n)}(w, w')$ in $\zeta,$ we fix $\zeta_1< \zeta_2 < \zeta_1 +1.$ By repeating the same set of arguments as above we arrive at 
	\begin{align}\label{eq: kcz}
		|K_{\zeta_2,t}^{(n)}(w, w') - K_{\zeta_1,t}^{(n)}(w, w')|\le C|\zeta_2^{\delta -n} - \zeta_1^{\delta -n}|\exp(-t\hq(\delta))
	\end{align}
	with the same constant $\Con$ in \eqref{eq: kdbd}. Clearly l.h.s.~of \eqref{eq: kcz} converges to 0 when $\zeta_2 \rightarrow \zeta_1$, which confirms the kernel's $\zeta$-continuity.
\end{proof}

\subsection{Proof of Proposition \ref{p:leading}} \label{sec:lead} The goal of this section is to prove Proposition \ref{p:leading}. Before diving into the proof, we first settle the infinite differentiability separately in the next proposition.

\begin{proposition}\label{ppn:dkernel} For any $n\in \Z_{\ge0}$ and $t>0$, the operator $K_{\zeta,t}^{(n)}$ defined in \eqref{def: kerdrv} is a trace-class operator with
	\begin{align}\label{eq:inttr}
		\tr (K_{\zeta,t}^{(n)}) = \frac1{2\pi \i}\int_{\C(\tau^{1-\frac\delta2})} K_{\zeta,t}^{(n)}(w,w)\d w.
	\end{align}
	Furthermore, $\tr (K_{\zeta,t}^{(n)})$ is differentiable in $\zeta$ at each $\zeta>0$ and we have $\partial_{\zeta} \tr (K_{\zeta,t}^{(n)}) = \tr (K_{\zeta,t}^{(n+1)})$. %in the $u$-th Schatten-norm, with derivative being equal to $K_{\zeta,t}^{(n+1)}$, i.e.,
	%	\begin{align}
	%		\lim_{\zeta'\to \zeta}\NOrm{\frac{K_{\zeta',t}^{(n)}-K_{\zeta,t}^{(n)}}{\zeta'-\zeta}-K_{\zeta,t}^{(n+1)}}_u=0.
	%	\end{align}
\end{proposition}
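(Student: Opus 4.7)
The plan is to establish the three assertions of Proposition \ref{ppn:dkernel} in order: (i) trace-class property of $K_{\zeta,t}^{(n)}$, (ii) the diagonal-integral trace formula \eqref{eq:inttr}, and (iii) the derivative identity $\partial_\zeta \tr(K_{\zeta,t}^{(n)}) = \tr(K_{\zeta,t}^{(n+1)})$. The key technical input for all three is the uniform estimates of Lemma \ref{l:kdbd}, together with the geometric observation that for $u$ with $\rl[u] = \delta$ and $w,w' \in \C(\tau^{1-\delta/2})$, one has $|\tau^u w| = \tau^{1+\delta/2} < \tau^{1-\delta/2} = |w'|$.

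For (i) and (ii), I would exploit the above radius inequality to Laurent-expand $(w' - \tau^u w)^{-1} = \sum_{k \ge 0} (\tau^u w)^k / (w')^{k+1}$ inside the integrand of \eqref{def: kerdrv}. The justification for interchanging this sum with the $u$-integral comes from Lemma \ref{l:kdbd}, whose proof carries over with the extra $\tau^{uk}$ factor and produces an additional decay of order $\tau^{\delta k}$. This recasts $K_{\zeta,t}^{(n)}$ as an absolutely convergent series of rank-one operators $a_k \otimes b_k$, where $b_k(w') = (w')^{-(k+1)}$ and $a_k(w) = w^k \cdot \tfrac{1}{2\pi \i}\int \Gamma(-u)\Gamma(1+u)(u)_n \zeta^{u-n}\tau^{uk} e^{tf(u,w)}\,du$. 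A geometric bound of the form $\norm{a_k}_2 \norm{b_k}_2 \le \Con \tau^{\delta k/2}$ makes the series trace-class-summable, establishing that $K_{\zeta,t}^{(n)}$ is trace class. The trace can then be evaluated as $\sum_k \tr(a_k \otimes b_k)$, and upon reassembling the Laurent series this collapses back to $\frac{1}{2\pi \i}\int_{\C(\tau^{1-\delta/2})} K_{\zeta,t}^{(n)}(w,w)\,dw$, yielding \eqref{eq:inttr}.

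For (iii), I plan to use \eqref{eq:inttr} and differentiate under the outer contour integral and then under the inner $u$-integral in \eqref{def: kerdrv}. At the formal level $\partial_\zeta \zeta^{u-n} = (u-n)\zeta^{u-n-1}$, and the Pochhammer identity $(u)_n(u-n) = (u)_{n+1}$ gives exactly the integrand defining $K_{\zeta,t}^{(n+1)}(w,w')$, so formally $\partial_\zeta K_{\zeta,t}^{(n)} = K_{\zeta,t}^{(n+1)}$ pointwise. Rigorous justification of both interchanges proceeds by fixing $\zeta$ in a compact subinterval $[\zeta_0, \zeta_1] \subset (0,\infty)$, controlling the difference quotient by a $\zeta$-uniform integrable majorant supplied by Lemma \ref{l:kdbd} (applied with appropriate $\rho$ for $K^{(n)}$ and then for $K^{(n+1)}$), and invoking the dominated convergence theorem on both the $u$-line and the compact $w$-contour.

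The main obstacle I anticipate is the trace-class upgrade. The bound of Lemma \ref{l:kdbd} only yields uniform boundedness of the kernel on the compact contour, hence Hilbert--Schmidt, but trace class is strictly stronger. The Laurent-series decomposition is designed to exploit the gap $|\tau^u w| < |w'|$ that is baked into the paper's contour choice, which is the same geometric feature that makes Theorem \ref{thm:laplace} applicable in the first place. Once this factorization is in place, the remaining work for (ii) and (iii) is a routine combination of Fubini's theorem and dominated convergence.
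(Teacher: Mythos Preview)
Your proposal is sound, and for part (iii) it is essentially the paper's argument: the paper also controls the difference quotient via Lemma~\ref{l:kdbd}, writing $D_{\zeta_1,\zeta_2} := (\zeta_2-\zeta_1)^{-1}[\tr K_{\zeta_2,t}^{(n)} - \tr K_{\zeta_1,t}^{(n)}] - \tr K_{\zeta_1,t}^{(n+1)}$ and using the Taylor-remainder identity $(u)_n\bigl[\tfrac{\zeta_2^{u-n}-\zeta_1^{u-n}}{\zeta_2-\zeta_1} - (u-n)\zeta_1^{u-n-1}\bigr] = \int_{\zeta_1}^{\zeta_2}\tfrac{\zeta_2-\sigma}{\zeta_2-\zeta_1}(u)_{n+2}\sigma^{u-n-2}\,d\sigma$ to reduce to the estimate \eqref{eq: idbd} with $n\mapsto n+2$, which is your dominated-convergence scheme made explicit.

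For (i) and (ii), however, the paper takes a genuinely different and shorter path. Rather than factorizing into rank-one operators via the Laurent series of $(w'-\tau^u w)^{-1}$, the paper simply observes that $K_{\zeta,t}^{(n)}(w,w')$ is jointly continuous and that $\partial_{w'}K_{\zeta,t}^{(n)}(w,w')$ is continuous in $w'$, and then invokes a black-box criterion (Lemma~3.2.7 in \cite{mcd}, cf.\ \cite[p.~345]{lax} or \cite{bor}) asserting that on a smooth closed curve a $C^1$-in-$w'$ kernel is automatically trace class; formula~\eqref{eq:inttr} then follows from \cite[Chapter~30, Theorem~12]{lax}. Your Laurent-series decomposition is more self-contained---it avoids citing an external trace-class criterion and makes the summability visible through the geometric factor $\tau^{\delta k}$ coming from $|\tau^u w|/|w'|=\tau^\delta$---at the cost of a short computation of $\|a_k\|_2\|b_k\|_2$. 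The paper's route is quicker if one is willing to import the criterion, and it generalizes more readily to the $L$-fold case treated later in Proposition~\ref{p:trL}, where only continuity-type bounds are carried along rather than an explicit series structure.
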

\begin{proof} Fix $n\in \Z_{\ge 0}, t>0$, and $\zeta>0$. $K_{\zeta,t}^{(n)}(w,w')$ is simultaneously continuous in both $w$ and $w'$ and $\partial_{w'}K_{\zeta,t}^{(n)}(w,w')$ is continuous in $w'$. By Lemma 3.2.7 in \cite{mcd} (also see \cite[page 345]{lax} or \cite{bor}) we see that $K_{\zeta,t}^{(n)}$ is indeed trace-class, and thus \eqref{eq:inttr} follows from Theorem 12 in \cite[Chapter 30]{lax}. %Corollary 3.2 in \cite{bri91}. 
	To show differentiability of $\tr (K_{\zeta,t}^{(n)})$ in variable $\zeta$, we fix $\zeta_1,\zeta_2>0$. Without loss of generality we may assume $\zeta_1+1>\zeta_2>\zeta_1$. Let us define
	\begin{equation*}
		\begin{split}
			D_{\zeta_1,\zeta_2} & := \frac{\tr (K_{\zeta_2,t}^{(n)}) - \tr (K_{\zeta_1,t}^{(n)})}{\zeta_2-\zeta_1}- \tr (K_{\zeta_1,t}^{(n+1)}) \\ & = \frac1{(2\pi\i)^2}\int_{\C(\tau^{1-\frac{\delta}{2}})}\int_{\delta-\i \infty}^{\delta+\i \infty} \Gamma(-u)\Gamma(1+u)R_{\zeta_1,\zeta_2;n}(u)e^{tf(u,w)}\frac{\d u}{w-\tau^u w}  \d w,
		\end{split}
	\end{equation*}
	where
	\begin{align}\label{eq:rdef}
		R_{\zeta_1,\zeta_2;n}(u):=(u)_n\left[\frac{\zeta_2^{u-n}-\zeta_1^{u-n}}{\zeta_2-\zeta_1} - (u-n)\zeta_1^{u-n-1}\right]= \int_{\zeta_1}^{\zeta_2} \frac{(\zeta_2-\sigma)}{{\zeta_2-\zeta_1}}(u)_{n+2}\sigma^{u-n-2}\d\sigma
	\end{align}
	Taking absolute value and appealing to Euler's reflection principle, we obtain 
	\begin{align}\label{eq: d}
		|D_{\zeta_1,\zeta_2}| & \le \left|\frac1{(2\pi\i)^2}\int_{\C(\tau^{1-\frac{\delta}{2}})}\int_{\delta-\i \infty}^{\delta+\i \infty}\int_{\zeta_1}^{\zeta_2} \frac{(u)_{n+2}}{\sin(-\pi u)} \frac{(\zeta_2-\sigma)}{{\zeta_2-\zeta_1}}\sigma^{u-n-2} e^{tf(u,w)}\frac{\d\sigma\d u}{w-\tau^u w}  \d w\right| \\ & \le \frac{\tau^{1-\frac{\delta}{2}}}{2\pi}\int_{\zeta_1}^{\zeta_2}|\sigma^{\delta+\i y-n-2}|\d\sigma \cdot \max_{w\in \C(\tau^{1-\frac\delta2})}\int_{\R} \frac{(\delta+\i y)_{n+2}}{\sin(-\pi(\delta+\i y))}  |e^{tf(\delta+\i y,w)}|\frac{\d y}{|w-\tau^{\delta+\i y} w|}. \nonumber
	\end{align}
	Note that Lemma \ref{l:kdbd} (\eqref{eq: idbd} specifically) we see that the above maximum is bounded by $\Con \exp(-t\hq(\delta))$ where the constant $\Con$ is same as in \eqref{eq: idbd}.   Since $|\sigma^{u-n-2}|= |\sigma^{\delta-n-2}|\le |\zeta_1^{\delta-n-2}|$ over the interval $[\zeta_1, \zeta_2]$ for $\delta \in (0,n\vee 1)$, we obtain 
	\begin{equation*}
		|D_{\zeta_1,\zeta_2}| \le  \Con \exp(-\hq(\delta))\int_{\zeta_1}^{\zeta_2}|\sigma^{u-n-2}|\d \sigma \leq \Con \exp(-t\hq(\delta)) (\zeta_2 -\zeta_1)|\zeta_1^{\delta-n-2}|.
	\end{equation*}
	Thus, taking the limit as $ \zeta_2 - \zeta_1 \rightarrow 0$ yields $|D_{\zeta_1, \zeta_2}|\rightarrow 0$ and completes the proof.
\end{proof}

\begin{remark} We prove a higher order version of Proposition \ref{ppn:dkernel} later in Section \ref{sec:higher} as Proposition \ref{p:trL} which includes the statement of the above Proposition when $L =1$. However, we keep the above simple version for reader's convenience, which will serve as a guide in proving Proposition \ref{p:trL}.
\end{remark}

With the above results in place, we can now turn towards the main technical component of the proof of Proposition \ref{p:leading}.

\begin{proof} [Proof of Proposition \ref{p:leading}] Before proceeding with the proof, we fix some notations. Fix $s>0$, and set $n=\lfloor s \rfloor+1 \ge 1$ and  $\alpha=s-\lfloor s\rfloor\in [0,1)$ so that $s=n-1+\alpha$. Throughout the proof, we will denote $\Con$ to be positive constant depending only on $s,q$ -- we will not mention this further.  We will also use the big $O$ notation. For two complex-valued functions $f_1(t)$ and $f_2(t)$ and $\beta\in \R$, the equations $f_1(t)=(1+O(t^{\beta}))f_2(t)$ and $f_1(t)=f_2(t)+O(t^{\beta})$ have the following meaning: there exists a constant  $\Con>0$ such that for all large enough $t$,
	$$\left|\frac{f_1(t)}{f_2(t)}-1\right| \le \Con \cdot t^{\beta}, \mbox{ and } |f_1(t)-f_2(t)| \le \Con \cdot t^{\beta},$$ 
	respectively. The constant $\Con>0$  value may change from line to line.  
	
	For clarity we divide the proof into seven steps. In Steps 1 and 2, we provide the upper and lower bounds for $|\mathcal{A}_s(t)|$ and $\rl[\mathcal{A}_s(t)]$ respectively and complete the proof of \eqref{eq:lead}; in Steps 3--7, we verify the technical estimates assumed in the previous steps.
	
	\medskip
	
	\noindent\textbf{Step 1.} Recall $\calA_s(t)$ from \eqref{eq:cala}. The goal of this step is to provide a different expression for $\calA_{s}(t)$, which will be much more amenable to our analysis, as well as an upper bound for $|\calA_{s}(t)|$. By Proposition \ref{ppn:dkernel}, we have $\frac{\d^n}{\d\zeta^n}\tr(K_{\zeta,t})=\tr(K_{\zeta,t}^{(n)})$ and consequently using the expression in \eqref{def: kerdrv} we have
	\begin{align*}
		\calA_{s}(t):= (-1)^n\int_{1}^{e^{t\bq(\frac{s}2)}}  \frac{\zeta^{-\alpha}}{(2\pi\i)^2}\int_{\C(\tau^{1-\frac{\delta}{2}})}\int_{\delta-\i\infty}^{\delta +\i\infty}\Gamma(-u)\Gamma(1+u)(u)_n\zeta^{u-n}\frac{e^{tf(u,w)}\d u}{w-\tau^u w} \d w\d\zeta.
	\end{align*}
	where $\delta \in (0,1)$ is chosen to be less than $s$. We now proceed to deform the $u$-contour and $w$-contour sequentially.  As we explained in Remark \ref{r:pole}, the integrand has no poles when $u=1,2,\ldots, n-1$. Hence $u$-contour can be deformed to $(s - \i \infty, s+\i \infty)$  as $s=n-1+\alpha\in (0,n).$
	
	Next, for the $w$-contour, we wish to deform it from $\C(\tau^{1-\frac\delta2})$ to $\C(\tau^{1-\frac{s}2})$. In order to do so, we need to ensure that we do not cross any poles. We observe that the potential sources of poles lie in the exponent $f(u,w):=\frac{(q-p)}{1+w\tau^{-1}}-\frac{(q-p)}{1+\tau^{u-1}w}$ (recalled from \eqref{eq:contour_fn}) and in the denominator $w- \tau^u w.$ Since for any $w \in \C(\tau^{1-\frac{\delta'}{2}}),$ where $\delta'\in (\delta, s)$, and $ u \in (s -\i \infty, s +\i \infty),$ we have 
	$$|w - \tau^u w| \ge |w|- |\tau^uw|= \tau^{1-\frac{\delta'}{2}}(1-\tau^s) > 0, \quad  |1+w\tau^{-1}|\ge |w\tau^{-1}|-1=\tau^{-\frac{\delta'}2}-1>0, $$ 
	$$\mbox{ and } |1+\tau^{u-1}w|\ge 1- |\tau^{u-1}w| =1-\tau^{s-\frac{\delta'}2} > 0.$$
	Thus, we can deform the $w$-contour to $\C(\tau^{1-\frac{s}2})$ as well without crossing any poles. With the change of variable $u=s+\i y$, $w=\tau^{1-\frac{s}2}e^{\i\theta}$, and Euler's reflection formula we have	
	\begin{align}\label{eq:trip}
		\calA_{s}(t)= (-1)^n\int_{1}^{e^{t\bq(\frac{s}2)}} \frac{\zeta^{-1}}{4\pi^2}\int_{-\pi}^{\pi}\int_{\R}\frac{(s+\i y)_n\zeta^{\i y}}{\sin(-\pi (s+\i y))}e^{tf(s+\i y, \tau^{1-\frac{s}{2}}e^{\i \theta})}\frac{\d y}{1-\tau^{s+\i y} } \d \theta\d\zeta.
	\end{align}
	
	With this expression in hand, upper bound is immediate.   By Lemma \ref{l:kdbd} (\eqref{eq: idbd} specifically with $\rho\mapsto n-1$, $\delta\mapsto s$) pushing the absolute value inside the integrals  we see that
	\begin{align}\label{eq:up1}
		|\calA_{s}(t)|\le \Con\exp(-t\hq(s))\int_{1}^{e^{t\bq(\frac{s}2)}}  \frac{1}{\zeta}\d\zeta = \Con \cdot t\bq(\tfrac{s}2)\exp(-t\hq(s))
	\end{align}
	for some constant $\Con=\Con(q,s)>0$. Hence taking logarithm and dividing by $t$, we get 
	\begin{align}\label{eq:upper}
		\limsup\limits_{t\to\infty} |\mathcal{A}_s(t)| \le -\hq(s) = -(q-p)\frac{1-\tau^{\frac{s}2}}{1+\tau^{\frac{s}2}}.
	\end{align}
	
	\medskip
	
	\noindent\textbf{Step 2.} In this step, we provide a lower bound for $\rl[\calA_s(t)]$. Set $\e=t^{-2/5}>0$. For each $k\in \Z$, set $v_k=-\frac{2\pi}{\log\tau}k$ and consider the interval $V_k:=[v_k -\e^2,v_k+\e^2 ].$ Also set $A_{\e}:=\{\theta \in [-\pi,\pi] : |e^{\i\theta}-1|\le \e |\log\tau|\}$. We divide the triple integral in \eqref{eq:trip} into following parts
	\begin{equation} \label{eq:A}
		\calA_{s}(t) = \sum_{k \in \Z}(\mathbf{I})_k + (\mathbf{II}) + (\mathbf{III}),
	\end{equation}
	where 
	\begin{align}\label{eq:1k}
		(\mathbf{I})_k & : =  \int_{1}^{e^{t\bq(\frac{s}2)}} \int_{A_{\e}}\int_{V_k} \frac{(-1)^n}{4\pi^2\zeta}\frac{(s+\i y)_n\zeta^{\i y}}{\sin(-\pi (s+\i y))}\frac{e^{tf(s+\i y, \tau^{1-\frac{s}{2}}e^{\i \theta})}\d y}{1-\tau^{s+\i y} } \d \theta\d\zeta, \\ \label{eq:2k}
		(\mathbf{II}) & := \int_{1}^{e^{t\bq(\frac{s}2)}} \int_{A_{\e}}\int_{\R\setminus \cup_k V_k} \frac{(-1)^n}{4\pi^2\zeta}\frac{(s+\i y)_n\zeta^{\i y}}{\sin(-\pi (s+\i y))}\frac{e^{tf(s+\i y, \tau^{1-\frac{s}{2}}e^{\i \theta})}\d y}{1-\tau^{s+\i y} } \d \theta\d\zeta,\\ \label{eq:3k}
		(\mathbf{III}) & :=  \int_{1}^{e^{t\bq(\frac{s}2)}} \int_{[-\pi,\pi]\cap A_{\e}^c}\int_{\R} \frac{(-1)^n}{4\pi^2\zeta}\frac{(s+\i y)_n\zeta^{\i y}}{\sin(-\pi (s+\i y))}\frac{e^{tf(s+\i y, \tau^{1-\frac{s}{2}}e^{\i \theta})}\d y}{1-\tau^{s+\i y} } \d \theta\d\zeta.
	\end{align}
	In subsequent steps we obtain the following estimates for each integral. We claim that we have
	\begin{align}\label{eq:in1}
		(\mathbf{I})_0 = (1+O(t^{-\frac{1}{5}}))\frac{\Con_0}{\sqrt{t}}\exp(-t\hq(s)),
	\end{align}
	where $\hq(s)$ is defined in \eqref{eq:exp} and
	\begin{align}\label{eq:co1}
		\Con_0 := \sqrt{\frac{(1+\tau^{\frac{s}2})^3}{4\pi(q-p)(\tau^{\frac{3s}2-2}-\tau^{2s-2})}}\frac{(-1)^n(s)_n}{\sin(-\pi s)(1-\tau^s)} > 0.
	\end{align}
	When $s$ is an integer the above constant is defined in a limiting sense. Note that $\Con_0$ is indeed positive as $n=\lfloor s \rfloor +1$. Furthermore, we claim that we have the following upper bounds for the other integrals:
	\begin{align}\label{eq:in2}
		\sum_{k\in \Z\setminus \{0\}} |(\mathbf{I})_k| \le \Con t^{-\frac{13}{10}}\exp(-t\hq(s)).
	\end{align}
	where $v_k=-\frac{2\pi}{\log\tau}k$ and
	\begin{align}\label{eq:out}
		|(\mathbf{II})|, |(\mathbf{III})|\le \Con t\exp\left(-t\hq(s)\right)\exp(-\tfrac1\Con t^{\frac{1}5}).
	\end{align}
	Assuming the validity of \eqref{eq:in1}, \eqref{eq:in2} and \eqref{eq:out} we can complete the proof of lower bound for \eqref{eq:lead}. 
	Following the decomposition in \eqref{eq:A} we see that for all large enough $t$,
	\begin{align*}
		\rl [\calA_s(t)] & \ge \rl [(\mathbf{I})_0]- \sum_{k\in Z\setminus \{0\}} |(\mathbf{I})_k| - | (\mathbf{II})|-|(\mathbf{III})| \\ & \ge \tfrac{1}{\sqrt{t}}\exp(-th_q(s))\left[\tfrac1{2}\Con_0-\Con t^{-\frac{4}5}- \Con t^{\frac32}\exp(-\tfrac1{\Con}t^{\frac{3}5})\right] \ge \tfrac{\Con_0}{4\sqrt{t}}\exp(-t\hq(s)).
	\end{align*} 
	Taking logarithms and dividing by $t$ we get that $\liminf_{t\to \infty} \rl[\calA_s(t)] \ge -\hq(s)$. Combining with \eqref{eq:upper} we arrive at \eqref{eq:lead}.
	
	\medskip
	
	\noindent\textbf{Step 3.} In this step, we prove \eqref{eq:out}. Recall $(\mathbf{II})$ and $(\mathbf{III})$ defined in \eqref{eq:2k} and \eqref{eq:3k}. For each of them, we push the absolute value around each term of the integrand. We use \eqref{eq: idbd} from Lemma \ref{l:kdbd} to get
	\begin{align}\label{eq:ii}
		&|(\mathbf{II})| \le \Con \exp\bigg(t\sup_{\substack{y \in \R \setminus \cup_k V_k\\|e^{\i \theta }-1|\leq \e|\log\tau|}}\rl[f(s + \i y, \tau^{1-\frac{s}{2}}e^{\i \theta})]\bigg)\int_1^{e^{t\bq(\frac{s}2)}} \frac{\d\zeta}{\zeta},\\ &\label{eq:iii} |(\mathbf{III})|\le \Con \exp\bigg(t\sup_{\substack{y \in \R \\|e^{\i \theta }-1|> \e|\log\tau|}}\rl[f(s + \i y, \tau^{1-\frac{s}{2}}e^{\i \theta})]\bigg)\int_1^{e^{t\bq(\frac{s}2)}} \frac{\d\zeta}{\zeta}.
	\end{align}
	Note that in \eqref{eq:ii}, we have $|\tau^{\i y} -1|\ge  |\tau^{\i t^{-\frac{4}{5}}} -1 | \ge \frac12|\log\tau| t^{-\frac{4}{5}}$ for all large enough $t$. Meanwhile in \eqref{eq:iii},  $|\tau^{1- \frac{s}{2}}(e^{\i \theta} -1)|\ge \tau^{1- \frac{s}{2}}\e|\log \tau| = \tau^{1- \frac{s}{2}}|\log \tau|t^{-\frac{2}{5}}$. In either case, appealing to \eqref{max:ineq} in Lemma \ref{lem:max} with $\rho\mapsto s$ gives us that 
	\begin{align*}
		f(s, \tau^{1- \frac{s}{2}}) - \rl[f(s+\i y, \tau^{1-\frac{s}{2}}e^{\i \theta})] \ge \tfrac1\Con \cdot t^{-\frac{4}{5}}.
	\end{align*}
	Substituting $f(s, \tau^{1-\frac{s}{2}})$ with $-\hq(s)$ and evaluating the integrals in \eqref{eq:ii} and \eqref{eq:iii} gives us \eqref{eq:out}.
	
	\medskip
	
	\noindent\textbf{Step 4.} In this step and subsequent steps we prove \eqref{eq:in1} and \eqref{eq:in2}. Recall that $v_k=-\frac{2\pi}{\log \tau} k$ and $\e=t^{-\frac25}$. We focus on the $(\mathbf{I})_k$ integral defined in \eqref{eq:ik}. Our goal in this and next step is to show
	\begin{align}\label{eq:ik1}
		(\mathbf{I})_k =  (1+O(t^{-\frac15}))\frac{ \Con_0(k)}{2\pi\sqrt{t}}\int_{1}^{e^{t\bq(\frac{s}2)}} \frac{\zeta^{\i v_k}}{\zeta}\int_{-\e^2}^{\e^2}\zeta^{\i y}\exp(-t\hq(s+\i y)) \d y\d\zeta.
	\end{align}
	where 
	\begin{align}\label{eq:cok}
		\Con_0(k):=\sqrt{\frac{(1+\tau^{\frac{s}2})^3}{4\pi(q-p)(\tau^{\frac{3s}2-2}-\tau^{2s-2})}}\frac{(-1)^n(s+\i v_k)_n}{\sin(-\pi (s+\i v_k))(1-\tau^s)}
	\end{align}
	
	Towards this end, note that in the argument for \eqref{eq:up1}, we push the absolute value around each term of the integrand. Thus, the upper bound achieved in \eqref{eq:up1} guarantees that the triple integral in $(\mathbf{I})_k$ is absolutely convergent. Thereafter, Fubini's theorem allows us to switch the order of integration inside $(\mathbf{I})_k$. By a change-of-variables, we see that
	\begin{align*}
		(\mathbf{I})_k  =  (-1)^n\int_{1}^{e^{t\bq(\frac{s}2)}} \frac{\zeta^{\i v_k-1}}{4\pi^2}\int_{-\e^2}^{\e^2}\frac{(s+\i y+\i v_k)_n\zeta^{\i y}}{\sin(-\pi (s+\i y+\i v_k))}\int_{A_\e}\frac{e^{tf(s+\i y, \tau^{1-\frac{s}{2}}e^{\i \theta})}\d \theta}{1-\tau^{s+\i y} } \d y\d\zeta,
	\end{align*}
	where recall $A_{\e}=\{\theta \in [-\pi,\pi] : |e^{\i\theta}-1|\le \e |\log\tau|\}$. Note that in this case range of $y$ lies in a small window of $[-t^{-\frac{4}5},t^{-\frac45}]$. As $s$ is fixed, one can replace $(s+\i y+\i v_k)_n$, $\sin (-\pi(s+\i y+\i v_k))$, and $1-\tau^{s+\i y}$ by $(s+\i v_k)_n$, $\sin (-\pi(s+\i v_k))$, and $1-\tau^{s}$ with an expense of $O(t^{-\frac{4}5})$ term (which can be chosen independent of $k$). We thus obtain
	\begin{align}\label{eq:ik}
		(\mathbf{I})_k =  \frac{(-1)^n(s+\i v_k)_n(1+O(t^{-\frac45}))}{\sin(-\pi (s+\i v_k))(1-\tau^s)}\int_{1}^{e^{t\bq(\frac{s}2)}} \frac{\zeta^{\i v_k}}{4\pi^2\zeta}\int_{-\e^2}^{\e^2}\zeta^{\i y}\int_{A_\e}e^{tf(s+\i y, \tau^{1-\frac{s}{2}}e^{\i \theta})}\d \theta \d y\d\zeta.
	\end{align}	
	We now evaluate the $\theta$-integral in the above expression.  We claim that
	\begin{align}\label{theta1}
		\int_{A_\e} e^{tf(s+\i y, \tau^{1-\frac{s}{2}}e^{\i \theta})}\d \theta & = (1+O(t^{-\frac1{5}}))\sqrt{\frac{\pi(1+\tau^{\frac{s}2})^3}{t(q-p)(\tau^{\frac{3s}2-2}-\tau^{2s-2})}}\exp(-t\hq(s+\i y))
	\end{align}
	Note that \eqref{eq:ik1} follows from \eqref{theta1}. Hence we focus on proving \eqref{theta1} in next step.
	
	\medskip
	
	\noindent\textbf{Step 5.} In this step we prove \eqref{theta1}.	For simplicity we let $u=s+\i y$ temporarily. Taylor expanding the exponent appearing in l.h.s.~of \eqref{theta1} around $\theta=-\frac{y}{2}\log\tau$ and using the fact $\partial_{z}f(u,z)|_{z =\tau^{1-\frac{u}{2}}} = 0$, we get 
	\begin{align}
		\mbox{l.h.s.~of \eqref{theta1}}  & =
		\int_{A_\e}e^{tf(u, \tau^{1-\frac{u}{2}}e^{\i (\theta + \frac{y}{2}\log \tau)})}\d \theta \nonumber \\ & = \exp(tf(u, \tau^{1-\frac{u}2}))\int_{A_\e} \exp\left(-\frac t2\partial_z^2f(u,\tau^{1-\frac{u}2})(\theta+\tfrac{y}{2}\log\tau)^2+ O(t^{-\frac{1}{5}})\right)\d \theta.	\label{theta2}
	\end{align}
	Note that we have replaced the higher order terms by $O(t^{-\frac15})$ in the exponent above as $\theta, y$ are at most of the order $O(t^{-\frac25})$. Furthermore, for all $t$ large enough, 
	\begin{align*}
		A_\e & =\{\theta \in [-\pi,\pi] : |e^{\i\theta}-1|\le \e |\log\tau|\} \\ & = \{\theta \in [-\pi,\pi] : |\sin\tfrac{\theta}{2}|\le \tfrac12\e |\log\tau|\} \supset \{\theta \in [-\pi,\pi] : |\theta|\le \e |\log\tau|\}
	\end{align*} 
	As $y\in [-\e^2,\e^2]$, we see that $A_\e \supset \{\theta \in [-\pi,\pi] : |\theta+\frac{y}{2}\log\tau|\le \frac12\e|\log\tau|\}$ for all large enough $t$. Thus on $A_\e^c$ we have $|\theta+\frac{y}2\log\tau| \ge \frac12t^{-\frac25}|\log\tau|$.
	Furthermore for small enough $y$, by \eqref{eq;deri}, we have $\rl[\partial_z^2 f(u,\tau^{1-\frac{u}2})]>0$. Hence the above integral can be approximated by Gaussian integral. In particular, we have
	\begin{align}
		\mbox{r.h.s.~of \eqref{theta2}} & = (1 +O(t^{-\frac15}))\exp(tf(u, \tau^{1-\frac{u}2}))\sqrt{\frac{2\pi}{t\partial_z^2f(u,\tau^{1-\frac{u}2})}} \label{e1}
	\end{align}
	Observe that as $u=s+\i y$ and $y$ is at most $O(t^{-\frac45})$, $\partial_z^2f(u,\tau^{1-\frac{u}2})$ in r.h.s.~of \eqref{e1} can be replaced by $\partial_z^2f(s,\tau^{1-\frac{s}2})$  by adjusting the order term. Recall the expression for $\partial_z^2f(s,\tau^{1-\frac{s}2})$ from \eqref{eq;deri} and observe that from the definition of $f$ and $\hq$ from \eqref{eq:contour_fn} and \eqref{eq:exp} we have $f(u,\tau^{1-\frac{u}2})=\hq(s+\i y)$. We thus arrive at \eqref{theta1}. 
	
	\medskip
	
	\noindent\textbf{Step 6.}  In this step and we prove \eqref{eq:in1} and \eqref{eq:in2} starting from the expression of $(\mathbf{I})_k$ obtained in \eqref{eq:ik1}. As $y$ varies in the window of $y\in [t^{-\frac45},t^{-\frac45}]$, by Taylor expansion we may replace $t\hq(s+\i y)$ appearing in the r.h.s.~of \eqref{eq:ik1} by  $t(\hq(s)+\i y\hq'(s))$ at the expense of an $O(t^{-\frac35})$ term. Upon making a change of variable $r=\log\zeta-t\hq'(s)$ we thus have  
	\begin{align}
		(\mathbf{I})_k & =  (1+O(t^{-\frac15}))\frac{ \Con_0(k)}{2\pi\sqrt{t}}\exp(-t\hq(s))\int_{-t\hq'(s)}^{{t\bq(\frac{s}2)}-t\hq'(s)} 	e^{\i v_k (r+t\hq'(s))}\int_{-\e^2}^{\e^2}e^{\i y r} \d y\d r \nonumber \\ & \label{eq:ik11} =  (1+O(t^{-\frac15}))\frac{ \Con_0(k)}{2\pi\sqrt{t}}\exp(-t\hq(s))\int_{-t\hq'(s)}^{{t\bq(\frac{s}2)}-t\hq'(s)} 	e^{\i v_k (r+t\hq'(s))}\frac{e^{\i \e^2 r}-e^{-\i \e^2 r}}{\i r}\d r. 
	\end{align}
	We claim that for $k=0$, (which implies $v_k=0$) we have
	
	\begin{align}\label{eq:c1}
		\int_{-t\hq'(s)}^{{t\bq(\frac{s}2)}-t\hq'(s)} \frac{e^{\i \e^2 r}-e^{-\i \e^2 r}}{\i r}\d r = 2\pi(1+O(t^{-\frac15}))
	\end{align}
	For $k\neq 0$, we have
	\begin{align}\label{eq:c2}
		\left|\int_{-t\hq'(s)}^{{t\bq(\frac{s}2)}-t\hq'(s)} 	e^{\i v_k (r+t\hq'(s))}\frac{e^{\i \e^2 r}-e^{-\i \e^2 r}}{\i r}\d r\right| \le \Con t^{-\frac{4}5}
	\end{align}
	where $\Con>0$ can be chosen free of $k$. Assuming \eqref{eq:c1} and \eqref{eq:c2} we may now complete the proof of \eqref{eq:in1} and \eqref{eq:in2}. Indeed, for $k=0$ upon observing that $\Con_0=\Con_0(0)$ (recall \eqref{eq:co1} and \eqref{eq:cok}), in view of \eqref{eq:ik11} and \eqref{eq:c1} we get \eqref{eq:in1}. Whearas for $k\neq 0$, thanks to the estimate in \eqref{eq:c2}, in view of \eqref{eq:ik11}, we have
	\begin{align}\label{eq:ksum}
		\sum_{k\in \Z\setminus \{0\}} |(\mathbf{I})_k| \le \Con t^{-\frac{13}{10}} \exp(-t\hq(s))\sum_{k\in \Z\setminus \{0\}} |\Con_0(k)|.
	\end{align}
	For $y\neq 0$, $|\frac{(s+\i y)_n}{\sin(-\pi(s+\i y))}|\le \Con |y|^ne^{-|y|/\Con}$ forces r.h.s.~of \eqref{eq:ksum} to be summable proving \eqref{eq:in2}. 
	
	\medskip
	
	\noindent\textbf{Step 7.} In this step we prove \eqref{eq:c1} and \eqref{eq:c2}. Recalling that $\e^2=t^{-\frac45}$, we see that
	
	\begin{align}\label{eq:k0}
		\int_{-t\hq'(s)}^{{t\bq(\frac{s}2)}-t\hq'(s)} \frac{e^{\i \e^2 r}-e^{-\i \e^2 r}}{\i r}\d r = \int_{-t^{1/5}\hq'(s)}^{{t^{1/5}\bq(\frac{s}2)}-t^{1/5}\hq'(s)} 	\frac{2\sin r}{r}\d r.
	\end{align}
	Following the definition of $\hq$ and $\bq$ in Proposition \ref{p:htau} we observe that that $-\hq'(s)=\frac{\tau^{\frac{s}2}\log\tau}{(1+\tau^{\frac{s}2})^2}<0$ and 
	$$B_q(s)-\hq'(s)=\frac{1-\tau^s+\tau^{\frac{s}{2}}s\log \tau}{s(1+\tau^{\frac{s}2})}=-s\bq'(s)>0,$$ 
	where $\bq'(s)<0$ follows from \eqref{eq:bq}.
	Thus as $\bq$ is strictly decreasing (Proposition \ref{p:htau} \ref{a}) we have $\bq(\frac{s}{2}) > \bq(s) > \hq'(s)$. Thus the integral on r.h.s.~of \eqref{eq:k0} can be approximated by $(1+O(t^{-1/5}))\int_{\R} \frac{2\sin r}{r}\d r= 2\pi(1+O(t^{-1/5}))$. This proves \eqref{eq:c1}. We now focus on proving \eqref{eq:c2}. Towards this end, we divide the integral appearing in \eqref{eq:c2} into three regions as follows
	\begin{equation}
		\label{eq:ik15}	
		\begin{aligned}
			\mbox{l.h.s.~of \eqref{eq:c2}} & \le \left|\int_{-t\hq'(s)}^{-1} 	e^{\i v_k (r+t\hq'(s))}\frac{e^{\i \e^2 r}-e^{-\i \e^2 r}}{\i r}\d r\right|  +\left|\int_{-1}^{1} 	e^{\i v_k (r+t\hq'(s))}\frac{e^{\i \e^2 r}-e^{-\i \e^2 r}}{\i r}\d r\right|\\ & \hspace{3cm}+\left|\int_{1}^{{t\bq(\frac{s}2)}-t\hq'(s)} 	e^{\i v_k (r+t\hq'(s))}\frac{e^{\i \e^2 r}-e^{-\i \e^2 r}}{\i r}\d r\right|. 
		\end{aligned}
	\end{equation} Note that for the second term appearing in r.h.s.~of \eqref{eq:ik15} can be bounded by $4 t^{-\frac45}$ using
	$$\left|\int_{-1}^{1} e^{\i v_k (r+t\hq'(s))}\frac{2\sin(\e^2r)}{r}\d r\right| \le \int_{-1}^{1} \left|\frac{2\sin(\e^2r)}{r}\right|\d r \le 4\e^2=4t^{-\frac{4}5}.$$
	For the first term  appearing in r.h.s.~of \eqref{eq:ik15}, by making a change of variable $r \mapsto r\frac{v_k-\e^2}{v_k+\e^2}$ we observe the following identity.
	\begin{align*}
		\int_{-t\hq'(s)}^{-1} 	e^{\i v_k (r+t\hq'(s))}\frac{e^{\i \e^2 r}}{\i r}\d r = \int_{-t\hq'(s)\frac{v_k+\e^2}{v_k-\e^2}}^{-\frac{v_k+\e^2}{v_k-\e^2}} 	e^{\i v_k (r+t\hq'(s))}\frac{e^{-\i \e^2 r}}{\i r}\d r
	\end{align*}
	This leads to 
	\begin{align*}
		\int_{-t\hq'(s)}^{-1} 	e^{\i v_k (r+t\hq'(s))}\frac{e^{\i \e^2 r}-e^{-\i\e^2 r}}{\i r}\d r & = \int_{-t\hq'(s)}^{-t\hq'(s)\frac{v_k+\e^2}{v_k-\e^2}} e^{\i v_k (r+t\hq'(s))}\frac{e^{-\i \e^2 r}}{\i r}\d r \\ & \hspace{2cm}+ \int_{-\frac{v_k+\e^2}{v_k-\e^2}}^{-1} 	e^{\i v_k (r+t\hq'(s))}\frac{e^{-\i \e^2 r}}{\i r}\d r
	\end{align*}
	In the first integral the length of the interval is $O(t^{1/5})$. However, the integrand itself is $O(t^{-1})$. For the second integral, the length of the interval is $O(t^{-4/5})$, and the integrand itself is $O(1)$. Note that this is only possible when $k\neq 0$ (forcing $v_k\neq 0$). And indeed all the $O$ terms can be taken to be free of $v_k$ (and hence of $k$). Combining this we get that the first term appearing in r.h.s~of \eqref{eq:ik15} can be bounded by $\Con t^{-\frac45}$. An exact analogous argument provides the same bound for the third term in r.h.s.~of \eqref{eq:ik15} as well. This proves \eqref{eq:c2} completing the proof. 
\end{proof}

	\section{Bounds for the Higher order terms}	\label{sec:higher}
	The goal of this section is to establish bounds for the higher-order term $\mathcal{B}_s(t)$ defined in (\ref{eq:Calb}). First, recall the Fredholm determinant formula from \eqref{eq:f-series}. Using the $\tr(K_{\zeta,t}^{\wedge L})$ notation from \eqref{eq:fdhm} we may rewrite $\calB_s(t)$ as follows.
	\begin{align}\label{eq: bst}
	\mathcal{B}_s(t) = (-1)^n\int_1^{e^{t\bq(\frac{s}{2})}}\zeta^{-\alpha}\frac{\d^n}{\d \zeta^n}\bigg[1 + \sum_{L =2}^{\infty}\tr(K^{\wedge L}_{\zeta. t})\bigg]\d \zeta.
	\end{align}
	We claim that we could exchange the various integrals, derivatives and sums appearring in the r.h.s. of  \eqref{eq: bst} and obtain $\mathcal{B}_s(t)$ through term-by-term differentiation, i.e. 
\begin{align}\label{eq:bfinal}
	\mathcal{B}_s(t) = (-1)^n \sum_{L=2}^{\infty}\int_1^{e^{t\bq(\frac{s}{2})}}\zeta^{-\alpha}\partial_{\zeta}^n(\tr(K_{\zeta,t}^{\wedge L}))\d \zeta.
\end{align}
	 Towards this end, we devote Section \ref{intrchnge} to its  justification. Following the technical lemmas in Section \ref{intrchnge}, we proceed to prove Proposition \ref{p:ho} in Section \ref{pf: p:ho}.
\subsection{Interchanging  sums, integrals and derivatives}\label{intrchnge}
Recall from (\ref{def: kerdrv}) the definition of $K_{\zeta, t}^{(n)}.$  As a starting point of our analysis, we introduce the following notations before providing the bounds on $|\partial_{\zeta}^n \tr(K_{\zeta, t}^{\wedge L})|.$
For any $n,L\in \Z_{>0}$, define
\begin{equation} \label{mln}
	\mathfrak{M}(L, n) := \{\vec{m} = (m_1, \ldots, m_L) \in (\Z_{\geq 0})^L: m_1 + \cdots + m_L = n\}
\mbox{ and }
	\binom{n}{\vec{m}}: = \frac{n!}{m_1!\cdots m_L!}.
\end{equation}
Furthermore, for any $L \in \Z_{>0},$ $\zeta \in \R_{>0}$ and $\vec{m} \in \mathfrak{M}(L, n)$, let
\begin{equation}\label{def: Im}
I_{\zeta}(\vec{m}) := {\int\ldots\int}\det(K_{\zeta, t}^{(m_i)}(w_i, w_j))_{i, j =1}^L\prod_{i = 1}^L\d w_i 
\end{equation}
where $w_i$-contour lies on $\C(\tau^{1-\frac{\delta}2})$. We also set $|\vec{m}|_{>0}: = |\{i \mid i \in \Z \cap [1, L], m_i >0\}|,$ i.e. the number of positive $m_i$ in $\vec{m}.$

To begin with, the next two lemma investigate the term-by-term $n$-th derivatives of  $\tr(K^{\wedge L}_{\zeta,t})$ that appear on the r.h.s. of \eqref{eq:bfinal}. The following should be regarded as a higher order version of Proposition \ref{ppn:dkernel}. 
\begin{proposition}\label{p:trL}
	Fix $n,L \in \Z_{>0}$ and let $\mathfrak{M}(L, n)$ be defined as in $(\ref{mln}).$ Recall the function $\bq(x)$ from Proposition \ref{p:htau}. For any $t > 0$, the function $\zeta\mapsto \tr(K_{\zeta, t}^{\wedge L})$ is infinitely differentiable at  each $\zeta \in [1, e^{t\bq(\frac{s}{2})}]$, with
	\begin{equation}\label{eq: exdi}
		\partial_\zeta^n \tr(K_{\zeta, t}^{\wedge L}) = \frac{1}{L!}\sum_{\vec{m}\in \mathfrak{M}(L, n)}\binom{n}{\vec{m}} I_{\zeta}(\vec{m}),
	\end{equation}
 where the r.h.s of \eqref{eq: exdi} converges absolutely uniformly. Furthermore, there exists a constant $\Con = \Con(n, \delta, q)>0$ such that for all $\vec{m}\in \mathfrak{M}(L,n)$ we have
 \begin{equation}\label{eq:exdi2}
 	|I_\zeta(\vec{m})| \le \Con^L L^{\frac{L}2}\zeta^{L\delta-n}\exp(-t\hq(\delta)), \quad 
|\partial_\zeta^n \tr(K_{\zeta, t}^{\wedge L})| \leq \frac{1}{L!}\Con^L L^n L^{\frac{L}{2}}\zeta^{L\delta-n}\exp(-tL\hq(\delta)).
 \end{equation} 
\end{proposition}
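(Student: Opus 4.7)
The plan is to prove \eqref{eq: exdi} by differentiating the permutation expansion of the determinant pointwise in $\zeta$, applying the multinomial Leibniz rule, and exchanging the derivative with the contour integral via dominated convergence using the uniform bounds of Lemma~\ref{l:kdbd}. The estimate \eqref{eq:exdi2} will then follow from Hadamard's inequality combined with the same pointwise kernel bound.

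First I would expand
\[
\det(K_{\zeta,t}(w_i,w_j))_{i,j=1}^{L}=\sum_{\sigma\in S_L}\mathrm{sgn}(\sigma)\prod_{i=1}^{L}K_{\zeta,t}(w_i,w_{\sigma(i)}).
\]
The integral representation \eqref{def: kerdrv} together with the differentiation-under-the-integral argument used in Proposition~\ref{ppn:dkernel}, but applied pointwise in $(w,w')$ rather than after taking the trace, yields $\partial_{\zeta}K_{\zeta,t}^{(m)}(w,w')=K_{\zeta,t}^{(m+1)}(w,w')$ for every $m\ge 0$. Iterating this identity and invoking the multinomial Leibniz rule within each $\sigma$-summand, then collecting terms by multilinearity of the determinant in its rows, produces the pointwise identity
\[
\partial_\zeta^n\det(K_{\zeta,t}(w_i,w_j))_{i,j=1}^L=\sum_{\vec m\in\mathfrak M(L,n)}\binom{n}{\vec m}\det(K_{\zeta,t}^{(m_i)}(w_i,w_j))_{i,j=1}^L.
\]
Integrating this against $\prod_i\d w_i$ over $\C(\tau^{1-\delta/2})^L$ and dividing by $L!$ gives \eqref{eq: exdi}. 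The exchange of the $n$-fold $\zeta$-derivative with the $L$-fold contour integral is carried out inductively on finite-difference quotients whose Taylor remainders are controlled by the uniform pointwise bound $|K_{\zeta,t}^{(m)}(w,w')|\le\Con\,\zeta^{\delta-m}\exp(-t\hq(\delta))$ from \eqref{eq: kdbd} applied for $m\le n+1$; this is the $L$-dimensional analogue of the final step in the proof of Proposition~\ref{ppn:dkernel}.

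For \eqref{eq:exdi2} I would apply Hadamard's inequality to the matrix $(K_{\zeta,t}^{(m_i)}(w_i,w_j))_{i,j=1}^L$. Each of the $L$ entries in row $i$ is bounded by $\Con\,\zeta^{\delta-m_i}\exp(-t\hq(\delta))$, so the row $2$-norm is at most $\sqrt{L}\,\Con\,\zeta^{\delta-m_i}\exp(-t\hq(\delta))$. Taking the product over $i=1,\dots,L$ and using $\sum_i m_i=n$ yields the pointwise determinantal bound $\Con^L L^{L/2}\zeta^{L\delta-n}\exp(-tL\hq(\delta))$, which after integration against the bounded-length contour $\C(\tau^{1-\delta/2})^L$ gives the asserted bound on $|I_\zeta(\vec m)|$ (the length factor being absorbed into a new $\Con^L$). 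Summing over $\vec m\in\mathfrak M(L,n)$ via $\sum_{\vec m}\binom{n}{\vec m}=L^n$ then produces the stated bound on $|\partial_\zeta^n\tr(K_{\zeta,t}^{\wedge L})|$, and this very majorant also certifies the uniform absolute convergence asserted for the series in \eqref{eq: exdi}.

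The main obstacle I anticipate is not any single estimate but rather the bookkeeping required to handle the interchange of derivative, contour integration, sum over $\sigma$, and eventually the outer sum over $L$ (needed to pass from \eqref{eq: bst} to \eqref{eq:bfinal}) with a single $L$-uniform majorant. The Hadamard estimate is strong enough for this: by Stirling the combinatorial factor $L^{L/2}/L!$ is of order $\Con^L/\sqrt{L!}$, so the exponential gain $\exp(-tL\hq(\delta))$ dominates for large $t$ and simultaneously legitimizes the termwise differentiation here and the summability needed in the proof of Proposition~\ref{p:ho}.
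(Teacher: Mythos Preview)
Your argument is correct and reaches the same conclusion, but the route differs from the paper's. You expand the determinant as a permutation sum, differentiate each $L$-fold product pointwise via the multinomial Leibniz rule (using $\partial_\zeta K_{\zeta,t}^{(m)}=K_{\zeta,t}^{(m+1)}$ pointwise in $(w,w')$), and then justify the interchange with the compact contour integral by dominated convergence. The paper instead never expands into permutations: it works directly with the integrated quantities $I_\zeta(\vec m)$ and proves \eqref{eq: exdi} by induction on $n$, controlling the finite difference $D_{\zeta_1,\zeta_2}$ through a row-by-row telescoping interpolation $I_{\zeta_1,\zeta_2}^{(k)}(\vec m)$ in which successive rows have their $\zeta$-parameter switched from $\zeta_1$ to $\zeta_2$; multilinearity of the determinant in its rows then isolates a single row where the remainder estimate \eqref{eq:rdef} applies. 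Your approach is more elementary and transparent, and your use of the multinomial identity $\sum_{\vec m}\binom{n}{\vec m}=L^n$ is slightly sharper than the paper's cruder $\binom{n}{\vec m}\le n!$ combined with $|\mathfrak M(L,n)|\le L^n$. The paper's interpolation scheme, on the other hand, avoids the $L!$-term permutation expansion and keeps all estimates at the level of determinants, which scales more cleanly with $L$ (no intermediate $L!$ factor to cancel). Either route is fully rigorous here; for the Hadamard step the two proofs coincide.
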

\begin{proof} The proof idea is same as that of Proposition \ref{ppn:dkernel}, but it's more cumbersome notationally. For clarity we split the proof into four steps. In the first step, we introduce some necessary notations. In Steps 2-3, we prove \eqref{eq: exdi} and in the final step, we prove \eqref{eq:exdi2}.
	
	\medskip

	\noindent\textbf{Step 1.} In this step we summarize the notation we will require in the proof of \eqref{eq: exdi}. We fix $L\in \Z_{>0}, \delta\in (0,1),t>0$, and $\zeta_1, \zeta_2 > 0$ and recall $\bq(x)$ from Proposition \ref{p:htau}. 
	
	We define $\vec\xi_k \in [1,e^{tB_q(\frac{s}{2})}]^L$ to be the vector whose first $k$ entries are $\zeta_2$ and the rest $L-k$ entries are $\zeta_1$:
	$$\vec\xi_k := (\xi_{k,1},\xi_{k,2},\ldots,\xi_{k,L}):=(\ \underbrace{ \zeta_2\ , \ \zeta_2\ ,\ \ldots\ ,\ \zeta_2}_{k \mbox{ times}}\ ,\ \underbrace{\zeta_1 \ ,\ \zeta_1\ ,\ \ldots \ ,\ \zeta_1 }_{L-k \mbox{ times}}\ ), \quad  k=0,1,\ldots,L.$$
	For any $\vec{m}=(m_1,m_2,\ldots,m_L)\in (\Z_{\ge 0})^L$ we define the following integral of mixed parameters
	\begin{equation}\label{def: mIm}
		I_{\zeta_1,\zeta_2}^{(k)}(\vec{m}) := {\int\ldots\int}\det(K_{\xi_{k,i}, t}^{(m_i)}(w_i, w_j))_{i, j =1}^L\prod_{i = 1}^L\d w_i. %= {\int\ldots\int}\det(K_{\xi_i, t}^{(m_i)})_{i, j =1}^L\prod_{i = 1}^L\d w_i 
	\end{equation}
where $w_i$-contour lies on $\C(\tau^{1-\frac{\delta}2})$.  $I_{\zeta_1,\zeta_2}^{(k)}(\vec{m})$ serves as an interpolation between $I_{\zeta_1}(\vec{m})$ and $I_{\zeta_2}(\vec{m})$ defined in \eqref{def: Im} as $k$ increases from 0 to $L$ where the parameters $\zeta$ are now allowed to be different for different rows in the determinant.
	
	\medskip
	
	 We next define $\vec{e}_k=(e_{k,1},e_{k,2},\ldots,e_{k,L})$ to be the unit vector with $1$ in the $k$-th position and $0$ elsewhere. With the above notations in place, for each $j,k \in \{1,2,\ldots,L\}$ and $\vec{m}\in (\Z_{\ge 0})^L$ we set 
	 \begin{align}\label{eq:d4}
	 	&\mathfrak{L}_{\zeta_1,\zeta_2}^{(1)}(\vec{m};k)  :=\frac1{\zeta_2-\zeta_1}\left[I_{\zeta_1,\zeta_2}^{(k)}(\vec{m})-I_{\zeta_1,\zeta_2}^{(k-1)}(\vec{m})-(\zeta_2-\zeta_1)I_{\zeta_1,\zeta_2}^{(k-1)}(\vec{m}+\vec{e}_k)\right],\\  \label{eq:d42}
	 	&\mathfrak{L}_{\zeta_1,\zeta_2}^{(2)}(\vec{m};j,k)  := I_{\zeta_1,\zeta_2}^{(j)}(\vec{m}+\vec{e}_k)-I_{\zeta_1,\zeta_2}^{(j-1)}(\vec{m}+\vec{e}_k).
	 \end{align}
	Note that we define \eqref{eq:d4} modelling after $D_{\zeta_1,\zeta_2}$ in the proof of Proposition \ref{ppn:dkernel}. Here, the only differences between the three determinants of  the respective $I_{\zeta_1,\zeta_2}^{(k)}(\vec{m})$'s lie in the $k$-th row, i.e. $K_{\zeta_2,t}^{(m_k)}$ v.s. $K_{\zeta_1,t}^{(m_k)}$ v.s. $K_{\zeta_1,t}^{(m_k+1)}.$ So we have isolated the differences and tried to reduce the question of differentiability to row-wise in \eqref{eq:d4}. Meanwhile, \eqref{eq:d42} ``measures" the distance between $I_{\zeta_1,\zeta_2}^{(k)}(\vec{m}+\vec{e}_k)$ and $I^{(k-1)}_{\zeta_1,\zeta_2}(\vec{m}+\vec{e}_k)$ where they differ only in $\xi_{k,k} = \zeta_2$ or $\zeta_1$ for $K_{\xi_{k,k},t}^{(m_k)}$ on the $k$-th row of the determinant. 
	
	We finally remark that all the $w_i$-contours in the integrals appearing throughout the proof are on $\C(\tau^{1-\frac\delta2})$ -- we will not mention this further. We would also drop $(w_i,w_j)$ from $K_{\bullet,t}^{(m_i)}(w_i,w_j)$ when it is clear from the context.
	
	\medskip

	\noindent\textbf{Step 2.} We show the infinite differentiability of $\tr(K_{\zeta, t}^{\wedge L})$ by proving \eqref{eq: exdi} in this step.  The proof proceeds via induction on $n$. When $n=0$, observe that \eqref{eq: exdi} recovers the formula of $\tr(K_{\zeta, t}^{\wedge L}).$ This constitutes the base case. To prove the induction step, suppose \eqref{eq: exdi} holds for $n = N$. Then for $n = N+1$, we fix $\zeta_1, \zeta_2 > 0$. Without loss of generality, we assume $\zeta_1+1 > \zeta_2 > \zeta_1$ and consider
 \begin{align}\label{eq:d0}
 	D_{\zeta_1, \zeta_2} & := \frac{\partial_{\zeta}^N \tr(K_{\zeta_2, t}^{\wedge L})- \partial_{\zeta}^N \tr(K_{\zeta_1,t}^{\wedge L}) }{\zeta_2 - \zeta_1} - \frac{1}{L!}\sum_{\vec{m}\in \mathfrak{M}(L, N+1)}\binom{N+1}{\vec{m}}I_{\zeta_1}(\vec{m}).
 \end{align}
To prove \eqref{eq: exdi}, it suffices to show $|D_{\zeta_1,\zeta_2}| \to 0$ as $\zeta_2 \to \zeta_1$. Towards this end, we first claim that for all $\vec{m}\in \mathfrak{M}(L,N)$ and for all $j,k \in \{1,2,\ldots,L\}$ we have
\begin{align}\label{eq:last}
	\big|\mathfrak{L}_{\zeta_1,\zeta_2}^{(1)}(\vec{m};k)\big| \to 0, \mbox{ and } \big|\mathfrak{L}_{\zeta_1,\zeta_2}^{(2)}(\vec{m};j,k)\big| \to 0,\mbox{ as }\zeta_2\to \zeta_1,
\end{align}
where $\mathfrak{L}_{\zeta_1,\zeta_2}^{(1)}(\vec{m};k)$ and $\mathfrak{L}_{\zeta_1,\zeta_2}^{(2)}(\vec{m};j,k)$ are defined in \eqref{eq:d4} and \eqref{eq:d42} respectively. We postpone the proof of \eqref{eq:last} to the next step. Assuming its validity, we now proceed to complete the induction step. 

Towards this end, we first manipulate the expression appearing in r.h.s.~of \eqref{eq:d0}. A simple combinatorial fact shows
 $$\sum_{\vec{m}\in \mathfrak{M}(L, N+1)}\binom{N+1}{\vec{m}} I_{\zeta_1}(\vec{m}) = \sum_{k=1}^L\sum_{\vec{m}\in \mathfrak{M}(L, N)}\binom{N}{\vec{m}}I_{\zeta_1}(\vec{m} + \vec{e}_k),$$  
 where $\vec{e}_k$ is defined in Step 1. Substituting this combinatorics back into the r.h.s. of \eqref{eq:d0} and using the induction step for $n=N$, allows us to rewrite $D_{\zeta_1, \zeta_2}$ as follows:
 \begin{align}\label{eq:d2}
 	\mbox{r.h.s. of }\eqref{eq:d0} = \frac{1}{L!}\sum_{\vec{m}\in \mathfrak{M}(L, N)}\binom{N}{\vec{m}}\left[\frac{I_{\zeta_2}(\vec{m}) - I_{\zeta_1}(\vec{m})}{\zeta_2 - \zeta_1} - \sum_{k=1}^L I_{\zeta_1}(\vec{m}+\vec{e}_k) \right].
 \end{align}
 Recalling the definition of $I_{\zeta}(\vec{m})$ in \eqref{def: Im} and that of  $I_{\zeta_1,\zeta_2}^{(k)}(\vec{m})$ in \eqref{def: mIm}, we see that $\sum_{k=1}^L [I_{\zeta_1,\zeta_2}^{(k)}(\vec{m})-I_{\zeta_1,\zeta_2}^{(k-1)}(\vec{m})]$ telescopes to $I_{\zeta_2}(\vec{m}) - I_{\zeta_1}(\vec{m})$. Furthermore, if we recall  $\mathfrak{L}_{\zeta_1,\zeta_2}^{(1)}(\vec{m};k)$ and $\mathfrak{L}_{\zeta_1,\zeta_2}^{(2)}(\vec{m};j,k)$ from \eqref{eq:d4} and \eqref{eq:d42} respectively, we observe that $$I_{\zeta_1,\zeta_2}^{(k-1)}(\vec{m}+\vec{e}_k)-I_{\zeta_1}(\vec{m}+\vec{e}_k)=I_{\zeta_1,\zeta_2}^{(k-1)}(\vec{m}+\vec{e}_k)-I_{\zeta_1,\zeta_2}^{(0)}(\vec{m}+\vec{e}_k)=\sum_{j=1}^k \mathfrak{L}_{\zeta_1,\zeta_2}^{(2)}(\vec{m};j,k).$$ Combining these observations, we have
 \begin{align}
 \mbox{r.h.s.~of \eqref{eq:d2}} & =\frac{1}{L!}\sum_{\vec{m}\in \mathfrak{M}(L, N)}\binom{N}{\vec{m}}\sum_{k=1}^L \frac{\left[I_{\zeta_1,\zeta_2}^{(k)}(\vec{m})-I_{\zeta_1,\zeta_2}^{(k-1)}(\vec{m})-(\zeta_2-\zeta_1)I_{\zeta_1}(\vec{m}+\vec{e}_k)\right]}{\zeta_2-\zeta_1}\nonumber \\ & = \frac{1}{L!}\sum_{\vec{m}\in \mathfrak{M}(L, N)}\binom{N}{\vec{m}}\sum_{k=1}^L \left[\mathfrak{L}^{(1)}_{\zeta_1,\zeta_2}(\vec{m};k)+\sum_{j=1}^{k-1} \mathfrak{L}_{\zeta_1,\zeta_2}^{(2)}(\vec{m};j,k)\right]. \label{eq:d3}
 \end{align} 

Clearly r.h.s.~of \eqref{eq:d3} goes to zero as $\zeta_2\to \zeta_1$ whenever \eqref{eq:last} is true. Thus by induction we have \eqref{eq: exdi}.

\medskip

\noindent\textbf{Step 3.} In this step we prove \eqref{eq:last}. Recall $\mathfrak{L}_{\zeta_1,\zeta_2}^{(1)}(\vec{m};k)$ from \eqref{eq:d4}. Following the definition of $I_{\zeta_1,\zeta_2}^{(k)}(\vec{m})$ from \eqref{def: mIm} we have
\begin{equation*}
	\begin{aligned}
	\big|\mathfrak{L}_{\zeta_1,\zeta_2}^{(1)}(\vec{m};k)\big| & \le \int\cdots\int \frac1{\zeta_2-\zeta_1}\left|\det(K_{\xi_{k,i}, t}^{(m_i)})_{i, j =1}^L-\det(K_{\xi_{k-1,i}, t}^{(m_i)})_{i, j =1}^L\right. \\ & \hspace{4cm}\left.-(\zeta_2-\zeta_1)\det(K_{\xi_{k-1,i}, t}^{(m_i+e_{k,i})})_{i, j =1}^L\right|\prod_{i=1}^L \d w_i.
\end{aligned}
\end{equation*}
Recall that in the above expression, up to a constant, the three determinants differ only in the $k$-th row. Hence the above expression can be written as $\int\cdots\int |\det(A)|\prod_{i=1}^L \d w_i$, where the entries of $A$ are given as follows: 
\begin{align*}
	A_{i,j} & = K_{\zeta_2,t}^{(m_i)}(w_i,w_j), \quad i<k, \quad A_{i,j}  = K_{\zeta_1,t}^{(m_i)}(w_i,w_j), \quad i>k, \\
	A_{k,j}  & =\frac1{\zeta_2-\zeta_1}[K_{\zeta_2,t}^{(m_k)}(w_k,w_j)-K_{\zeta_1,t}^{(m_k)}(w_k,w_j)-(\zeta_2-\zeta_1)K_{\zeta_1,t}^{(m_k+1)}(w_k,w_j)] \\ & =\frac1{2\pi\i}\int_{\delta-\i\infty}^{\delta+\i\infty}\Gamma(-u)\Gamma(1+u)R_{\zeta_1,\zeta_2;m_k}(u)e^{tf(u,w_k)}\frac{\d u}{w_j-\tau^u w_k}
\end{align*}
where $R_{\zeta_1,\zeta_2;m_k}(u)$ is same as in \eqref{eq:rdef}. As $m_i$'s are at most $n$, by Lemma \ref{l:kdbd} (\eqref{eq: kdbd} specifically), we can get a constant $\Con>0$ depending only on $n,\delta,$ and $q$, so that
 $$|A_{i,j}| \le \Con(\zeta_1^{\delta-m_k}+\zeta_2^{\delta-m_k})\exp(-t\hq(\delta))\le \Con(1+\zeta_2^{\delta})\exp(-t\hq(\delta))$$ for all $i\neq k$. For $A_{k,j}$, we follow the same argument as in Proposition \ref{ppn:dkernel} (along the lines of \eqref{eq: d}) to get
	\begin{align*}
	|A_{k,j}|  & \le \frac{\tau^{1-\frac{\delta}{2}}}{2\pi}\int_{\zeta_1}^{\zeta_2}\left|\sigma^{\delta+\i y-m_k-2}\right|\d\sigma \cdot \max_{w_j,w_k\in \C(\tau^{1-\frac\delta2})}\int_{\R} \left|\frac{(\delta+\i y)_{m_k+2}}{\sin(-\pi(\delta+\i y))}  e^{tf(\delta+\i y,w_k)}\right|\frac{\d y}{|w_j-\tau^{\delta+\i y} w_k|}.
\end{align*}
Note that by Lemma \ref{l:kdbd} (\eqref{eq: idbd} specifically) we see that the above maximum is bounded by $\Con \exp(-t\hq(\delta))$ where again as $m_i$'s are at most $n$, the constant $\Con$ can be chosen dependent only on $n$,$\delta,$ and $q$.   Since $|\sigma^{u-n-2}|= |\sigma^{\delta-m_k-2}|\le |\zeta_1^{\delta-m_k-2}| \le |\zeta_1^{\delta-2}|$ over the interval $[\zeta_1, \zeta_2]$ for $\delta \in (0,1)$, we obtain $$|A_{k,j}| \le \Con \exp(-t\hq(\delta))\int_{\zeta_1}^{\zeta_2} |\sigma|^{\delta-m_k-2}\d\sigma \le \Con\exp(-t\hq(\delta))\zeta_1^{\delta-2}(\zeta_2-\zeta_1).$$
As all the above estimates on $|A_{i,j}|$ are uniform in $w_i$'s, using Hadamard inequality we have
\begin{align*}
	\int\cdots\int |\det(A)|\prod_{i=1}^L \d w_i & \le \Con^L L^{\frac{L}2}\exp(-tL\hq(\delta))(1+\zeta_2^{\delta})^{L-1}\zeta_1^{\delta-2}(\zeta_2-\zeta_1)
\end{align*}
Taking $\zeta_2\to \zeta_1$ above, we get the first part of \eqref{eq:last}. The proof of the second part of \eqref{eq:last} follows similarly by observing that the corresponding determinants also differ only in one row. One can then deduce the second part of \eqref{eq:last} using the uniform estimates of the kernel and difference of kernels given in \eqref{eq: kdbd} and \eqref{eq: kcz} respectively. As the proof follows exactly in the lines of above arguments, we omit the technical details. 

\medskip

\noindent\textbf{Step 4.} In this step we prove \eqref{eq:exdi2}. 

Recall the definition of $I_{\zeta}(\vec{m})$ from \eqref{def: Im}. By Hadamard's inequality and Lemma \ref{l:kdbd} we have
 \begin{equation}\label{eq:hd}
 	\begin{aligned}
 |\det(K_{\zeta, t}^{(m_i)})_{i, j = 1}^L |& \le L^{\frac{L}{2}} \prod_{i=1}^L\max_{w_i,w_j\in \C(\tau^{1-\frac\delta2})} |K_{\zeta, t}^{(m_i)}(w_i, w_j)| \\ & \le  L^{\frac{L}{2}}\prod_{i=1}^L\Con \zeta^{\delta-m_i}\exp(- t \hq(\delta))=\Con^L L^{\frac{L}{2}}\zeta^{L\delta-n}\exp(- t \hq(\delta)),
 \end{aligned}
 \end{equation}
 where the last equality follows as $\sum_{i=1}^L m_i=n$. Note that here also $\Con>0$ can be chosen to be dependent only on $n$, $\delta$, and $q$ as $m_i$'s are at most $n$. Recall that $w_i$-contour in $I_{\zeta}(\vec{m})$ lies on $\C(\tau^{1-\frac\delta2})$. Thus in view of \eqref{eq:hd} adjusting the constant $\Con$ we obtain first inequality of \eqref{eq:exdi2}. 
 
 For the second inequality, We observe the following recurrence relation: 
 \begin{equation}\label{eq:rec}
 	|\mathfrak{M}(L, n)| = |\{\vec{m} = (m_1, \ldots, m_L)\in \Z_{\geq 0}^L, \sum_{i = 1}^L m_i= n\}| \leq L\cdot|\mathfrak{M}(L, n-1)|.
 \end{equation}
 It follows immediately that $|\mathfrak{M}(L, n)| \leq L^n.$ Observe that for each $\vec{m}\in \mathfrak{M}(L,n),$ $\binom{n}{\vec{m}}$ is bounded from above by $n!$. Thus collectively with \eqref{eq: exdi} we have 
 \begin{align*}
 	|\partial_{\zeta}^n\tr(K_{\zeta,t}^{\wedge L})| \le \frac{n!L^n}{L!}\max_{\vec{m}\in \mathfrak{M}(L,n)}|I_{\zeta}(\vec{m})|.
 \end{align*}
Applying the first inequality of \eqref{eq:exdi2} above leads to the second inequality of \eqref{eq:exdi2} completing the proof.

\end{proof}
 \begin{lemma}\label{p: d-s}
 	Fix $n\in \Z_{> 0}$, $\zeta \in [1, e^{t\bq(\frac{s}{2})}],$ and $t > 0$. Then
 	\begin{equation*}
 		\partial_{\zeta}^n \bigg(\sum_{L = 1}^{\infty}\tr(K_{\zeta,t}^{\wedge L}) \bigg) = \sum_{L =1}^{\infty}\partial_{\zeta}^n(\tr(K_{\zeta,t}^{\wedge L}) ).
 	\end{equation*}
 \end{lemma}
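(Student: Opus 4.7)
The plan is to verify the classical hypotheses for term-by-term differentiation of a series, iterated $n$ times. The main inputs are already in place: Proposition \ref{p:trL} gives both the infinite differentiability of each summand $\tr(K_{\zeta,t}^{\wedge L})$ and the explicit bound
\begin{align*}
	|\partial_\zeta^k \tr(K_{\zeta, t}^{\wedge L})| \leq \frac{\Con^L L^k L^{L/2}}{L!}\,\zeta^{L\delta-k}\exp(-tL\hq(\delta))
\end{align*}
valid for each $k\in \Z_{\ge 0}$ and $\delta\in(0,1)$ (with $\Con=\Con(k,\delta,q)$).

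First I would fix $t>0$ and restrict attention to $\zeta \in [1,e^{t\bq(s/2)}]$, so that for any fixed $k$,
\begin{align*}
	\zeta^{L\delta-k}\exp(-tL\hq(\delta)) \;\le\; \exp\bigl(tL(\delta\, \bq(\tfrac{s}{2}) - \hq(\delta))\bigr).
\end{align*}
Since $L!\sim L^{L}e^{-L}\sqrt{2\pi L}$ by Stirling, the ratio $L^{L/2}/L!$ decays super-exponentially in $L$, and therefore the summable majorant
\begin{align*}
	\sum_{L=1}^{\infty}\frac{\Con^L L^k L^{L/2}}{L!}\exp\bigl(tL(\delta\, \bq(\tfrac{s}{2}) - \hq(\delta))\bigr) < \infty
\end{align*}
dominates $\sum_L |\partial_\zeta^k \tr(K_{\zeta,t}^{\wedge L})|$ uniformly on $[1,e^{t\bq(s/2)}]$. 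Thus $\sum_L \partial_\zeta^k \tr(K_{\zeta,t}^{\wedge L})$ converges absolutely and uniformly on this interval for every $k\ge 0$.

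Next I would argue by induction on $n$. The base case $n=0$ is the convergence of the Fredholm determinant series itself, which follows from the $k=0$ version of the majorant above (this is essentially the standard Hadamard estimate underlying the definition in \eqref{eq:f-series}). For the inductive step, assuming $\partial_\zeta^{n-1}(\sum_L \tr(K_{\zeta,t}^{\wedge L})) = \sum_L \partial_\zeta^{n-1}\tr(K_{\zeta,t}^{\wedge L})$ on $[1,e^{t\bq(s/2)}]$, I invoke the standard real-analysis result: if the $f_L := \partial_\zeta^{n-1}\tr(K_{\zeta,t}^{\wedge L})$ are differentiable, $\sum f_L$ converges pointwise, and $\sum f_L'$ converges uniformly, then the sum may be differentiated termwise. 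Proposition \ref{p:trL} supplies differentiability of each $f_L$ with $f_L' = \partial_\zeta^n\tr(K_{\zeta,t}^{\wedge L})$, and the uniform convergence of $\sum f_L'$ was just established. This closes the induction and yields the claim.

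No step here is genuinely hard — the only subtlety is checking that the apparent growth of $\zeta^{L\delta}$ on the (exponentially long) interval $[1,e^{t\bq(s/2)}]$ is dominated by the super-exponential decay of $L^{L/2}/L!$, which it is. Once this is observed, the rest is a straightforward iteration of the differentiate-under-the-sum theorem.
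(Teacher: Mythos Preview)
Your proposal is correct and follows essentially the same approach as the paper: both arguments rest on the bound \eqref{eq:exdi2} from Proposition~\ref{p:trL} to establish uniform convergence of the derivative series on $[1,e^{t\bq(s/2)}]$, together with the standard term-by-term differentiation criterion. The paper compresses the induction into a citation of \cite[Proposition~4.2]{dt19}, whereas you spell it out; your explicit observation that the super-exponential decay of $L^{L/2}/L!$ beats the possible growth of $\zeta^{L\delta}$ on the exponentially long interval is exactly what the paper leaves implicit in its final summability claim.
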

\begin{proof}
%	\wz{refer to Lin prop 4.6}
	On account of \cite[Proposition 4.2]{dt19}), it suffices to verify the following conditions:
	\begin{enumerate}
		\item $\sum_{L = 1}^{\infty}\tr(K_{\zeta,t}^{\wedge L})$ converges absolutely pointwise for $\zeta \in [1, e^{t\bq(\frac{s}{2})}];$
		\item  the absolute derivative series $ \sum_{L =1}^{\infty}\partial_{\zeta}^n(\tr(K_{\zeta,t}^{\wedge L}) )$ converges uniformly for $\zeta \in [1, e^{t\bq(\frac{s}{2})}].$ 
	\end{enumerate}	By Proposition \ref{p:trL}, we can pass the derivative inside the trace in $(2).$ Both $(1)$ and $(2)$ follow from (\ref{eq:exdi2}) in Proposition \ref{p:trL} as $\sum_{L=1}^{\infty} \frac{1}{L!}\Con^L L^n L^{\frac{L}{2}}\zeta^{L\delta-n}\exp(-tL\hq(\delta))<\infty$ for each $\zeta\in [1,e^{t\bq(\frac{s}2)}]$.
\end{proof}

Now, with the  results from Lemmas \ref{p:trL} and \ref{p: d-s}, we are poised to justify the interchanges of operations leading to \eqref{eq:bfinal}. 
\begin{proposition} \label{p: s-i}
For fixed $n, L \in \Z_{\geq 0}$, $\zeta \in [1, e^{t\bq(\frac{s}{2})}]$ and $t > 0$, 
\begin{equation} \label{eq: intrchge}
\int_1^{e^{t\bq(\frac{s}{2})}}\zeta^{-\alpha}	\partial_{\zeta}^n \bigg[1+\sum_{L = 2}^{\infty}\tr(K_{\zeta,t}^{\wedge L}) \bigg] \d\zeta=\sum_{L =2}^{\infty}\sum_{\vec{m}\in \mathfrak{M}(L, n)}\binom{n}{\vec{m}}\frac{1}{L!}\int_1^{e^{t\bq(\frac{s}{2})}}\zeta^{-\alpha} I_{\zeta}(\vec{m})\d\zeta.
\end{equation}
\end{proposition}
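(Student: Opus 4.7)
The plan is to justify three interchanges of operations in sequence, each established by a prior result or standard technique. First, since $n \geq 1$ we have $\partial_\zeta^n(1) = 0$, and by Lemma \ref{p: d-s} the derivative passes inside the $L$-sum, yielding
$$\partial_\zeta^n\bigg[1 + \sum_{L = 2}^\infty \tr(K_{\zeta,t}^{\wedge L})\bigg] = \sum_{L = 2}^\infty \partial_\zeta^n \tr(K_{\zeta,t}^{\wedge L})$$
pointwise for $\zeta \in [1, e^{t\bq(s/2)}]$. Second, I apply \eqref{eq: exdi} from Proposition \ref{p:trL} to rewrite each summand $\partial_\zeta^n \tr(K_{\zeta,t}^{\wedge L})$ as the finite combination $\frac{1}{L!}\sum_{\vec m\in \mathfrak{M}(L,n)} \binom{n}{\vec m} I_\zeta(\vec m)$, which is the integrand appearing on the right-hand side of the desired identity.

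The remaining step is to interchange the $L$-sum with the outer integral $\int_1^{e^{t\bq(s/2)}} \zeta^{-\alpha}\, d\zeta$. To invoke Fubini's theorem, it suffices to verify absolute summability:
$$\sum_{L=2}^\infty \int_1^{e^{t\bq(s/2)}} \zeta^{-\alpha}\, \bigl|\partial_\zeta^n \tr(K_{\zeta,t}^{\wedge L})\bigr|\, d\zeta < \infty.$$
The uniform bound \eqref{eq:exdi2} from Proposition \ref{p:trL} reduces this to checking that $\sum_L \frac{\Con^L L^n L^{L/2}}{L!} e^{Lc(t)}$ converges, where the factor $e^{Lc(t)}$ with $c(t) < \infty$ accounts for integrating $\zeta^{L\delta - n - \alpha}$ over the bounded interval $[1, e^{t\bq(s/2)}]$ (any polynomial $\zeta^{-\alpha}$ prefactor and constants are absorbed).

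The main (and essentially only) obstacle is this final convergence check, which is handled by Stirling's approximation: $L^{L/2}/L! \sim e^L L^{-L/2}/\sqrt{2\pi L}$ is super-exponentially small in $L$ and dominates any factor of the form $\Con^L e^{Lc(t)}$. Once this absolute summability is established, Fubini's theorem delivers the swap of sum and integral, and combining the three interchanges produces the identity stated in the proposition.
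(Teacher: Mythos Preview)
Your proof is correct and follows essentially the same route as the paper: invoke Lemma~\ref{p: d-s} to pass $\partial_\zeta^n$ inside the $L$-sum, use the bound \eqref{eq:exdi2} together with Stirling to justify Fubini for swapping the $L$-sum with the $\zeta$-integral, and apply the identity \eqref{eq: exdi} to expand each $\partial_\zeta^n\tr(K_{\zeta,t}^{\wedge L})$. The only cosmetic difference is that the paper applies \eqref{eq: exdi} after the Fubini step rather than before, which is immaterial since the $\vec m$-sum is finite.
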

\begin{proof} Thanks to Lemma \ref{p: d-s} we can switch the order of derivative and sum to get
\begin{align*}
	\mbox{l.h.s.~of \eqref{eq: intrchge}} = \int_1^{e^{t\bq(\frac{s}2)}} \sum_{L=2}^{\infty} \zeta^{-\alpha}  \partial_{\zeta}^n(\tr(K_{\zeta,t}^{\wedge L}))\d \zeta.
\end{align*}	
We next justify the interchange of the integral and the sum in above expression. Note that via the estimate in \eqref{eq:exdi2} we have
\begin{align*}
	\int_1^{e^{t\bq(\frac{s}2)}} \sum_{L=2}^{\infty} \zeta^{-\alpha}  |\partial_{\zeta}^n(\tr(K_{\zeta,t}^{\wedge L}))|\d \zeta \le \sum_{L=2}^{\infty} \frac1{L!}\Con^LL^nL^{\frac{L}2}\exp(-t\hq(\delta))\int_1^{e^{t\bq(\frac{s}2)}}\zeta^{L\delta-n-\alpha}\d\zeta <\infty.
\end{align*}
Hence Fubini's theorem justifies the exchange of summation and integration. Finally we arrive at r.h.s.~ of \eqref{eq: intrchge} by using the higher order derivative identity (see \eqref{eq: exdi}) from Proposition \ref{p:trL}.

\end{proof}

\subsection{Proof of Proposition \ref{p:ho}}\label{pf: p:ho}
Finally, in this subsection we present the proof of Proposition \ref{p:ho} via obtaining an upperbound for $|\mathcal{B}_s(t)|$, defined in (\ref{eq:Calb}). 

\medskip

Recall $I_{\zeta}(\vec{m})$ from \eqref{def: Im}. We first introduce the following technical lemma that upper bounds the absolute value of the integral $\int_1^{e^{t\bq(\frac{s}{2})}}\zeta^{-\alpha}I_{\zeta}(\vec{m})\d \zeta$ and will be an important ingredient in the proof of Proposition \ref{p:ho}.
\begin{lemma}\label{l:itmdb}
Fix $s>0$ so that $\alpha:=s-\lfloor s \rfloor >0$. Set $n=\lfloor s\rfloor+1$. Fix $L\in \Z_{>0}$ with $L\ge 2$ and $\vec{m}\in \mathfrak{M}(L, n)$, where $\mathfrak{M}(L,n)$ is defined in \eqref{mln}.  There exists a constant $\Con = \Con(q,s)>0$ such that 
\begin{equation} \label{eq:imcase}
\int_1^{e^{t\bq(\frac{s}{2})}}\zeta^{-\alpha}|I_{\zeta}(\vec{m})|\d \zeta \le	\Con^L L^{\frac{L}{2}} \exp(-t\hq(s) -\tfrac{1}{\Con}t).
\end{equation} 
where $I_{\zeta}(\vec{m})$ is defined in \eqref{def: Im} and the functions $\bq$ and $\hq$ are defined in Proposition \ref{p:htau}.
\end{lemma}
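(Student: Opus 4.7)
The plan is to bound $|I_\zeta(\vec m)|$ via Proposition \ref{p:trL} (with a suitable choice of contour parameters), then carry out the $\zeta$-integration over $[1, e^{t\bq(s/2)}]$, and finally extract the required exponential gap $-t/\Con$ from the strict subadditivity of $\hq$ recorded in Proposition \ref{p:htau}\ref{b}.

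Concretely, I would start from the Hadamard-type estimate
$$|I_\zeta(\vec m)| \leq \Con^L L^{L/2}\zeta^{L\delta - n}\exp\bigl(-tL\hq(\delta)\bigr),$$
obtained by multiplying the $L$ per-row kernel bounds supplied by Lemma \ref{l:kdbd}. Choosing $\delta$ so that $L\delta < s$ makes the resulting $\zeta$-integral bounded by $1/(s - L\delta)$, and the task reduces to picking $\delta$ that maximizes $L\hq(\delta)$. The natural target is $\delta \approx s/L$: by strict subadditivity $L\hq(s/L) > \hq(s)$ for every $L \geq 2$, and since $L\hq(s/L) = s\bq(s/L)$ is strictly increasing in $L$ (by Proposition \ref{p:htau}\ref{a}, as $\bq$ is decreasing), the gap $L\hq(s/L) - \hq(s) \geq 2\hq(s/2) - \hq(s) > 0$ is bounded below uniformly across $L \geq 2$.

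The main subtlety is the admissibility constraint $\delta < m_i \vee 1$ coming from Lemma \ref{l:kdbd}, which forces $\delta < 1$ whenever some entry $m_i$ vanishes; when $L \leq s$ the ideal value $s/L$ exceeds $1$. To circumvent this, I would refine the Hadamard step by allowing row-dependent parameters $\delta_i \in (0, m_i \vee 1)$, yielding the sharper bound
$$|I_\zeta(\vec m)| \leq \Con^L L^{L/2}\zeta^{\sum_i\delta_i - n}\exp\Bigl(-t\sum_i \hq(\delta_i)\Bigr),$$
and then allocate $\delta_i = c(m_i \vee 1)$ with $c = (s - \epsilon)/(n + L - |\vec m|_{>0})$. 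Since $s < n$, we have $c < 1$, so each $\delta_i$ is admissible, while $\sum_i \delta_i = s - \epsilon < s$ keeps the $\zeta$-integral bounded by $1/\epsilon$.

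The hard part will be establishing the uniform lower bound $\sum_i \hq(\delta_i) \geq \hq(s) + 1/\Con$ across all $\vec m \in \mathfrak{M}(L, n)$ and all $L \geq 2$. For large $L$ the allocation $\delta_i = c(m_i \vee 1)$ becomes a Riemann-sum-like partition of a short interval near $0$, and the gap is controlled by $s\bq(0^+) - \hq(s) > 0$; for the finitely many small values of $L$ the gap is positive by strict subadditivity of $\hq$, and one takes the minimum over this finite collection. Combining this uniform gap with the $1/\epsilon$ factor from the $\zeta$-integral (with $\epsilon$ chosen small enough in terms of $(q,s)$ alone) then delivers the claimed estimate $\Con^L L^{L/2}\exp(-t\hq(s) - t/\Con)$.
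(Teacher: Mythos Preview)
Your overall architecture---row-dependent contour parameters in each factor of the Hadamard product, then strict subadditivity of $\hq$ to extract an exponential gap---is exactly the paper's strategy for bounded $L$. The allocation differs cosmetically (the paper takes $\rho_i=m_i+\alpha/L-1/|\vec m|_{>0}$ for $m_i>0$ and $\rho_i=\alpha/L$ for $m_i=0$, summing exactly to $s$ so that the $\zeta$-integrand is $\zeta^{-1}$), but the mechanism is the same.

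The gap is in your treatment of large $L$. With your choice $\delta_i=c(m_i\vee 1)$ and $c=(s-\epsilon)/(n+L-|\vec m|_{>0})$, the parameters with $m_i=0$ satisfy $\delta_i=c\le (s-\epsilon)/L\to 0$ as $L\to\infty$. But the constant $\Con=\Con(n,\delta,q)$ in Lemma~\ref{l:kdbd} is \emph{not} uniform as $\delta\to 0$: inspecting its proof, the bound $|\sin(-\pi(\delta+\i y))|^{-1}\le \Con e^{-|y|/\Con}$ forces $\Con\gtrsim 1/\sin(\pi\delta)\sim 1/\delta$, and once you also deform each $w_i$-contour to $\C(\tau^{1-\delta_i/2})$ the denominator $||w_j|-\tau^{1+\delta_i/2}|=|\tau^{1-\delta_j/2}-\tau^{1+\delta_i/2}|$ is of order $\delta_i+\delta_j$ and hence also vanishes. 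So each row contributes a factor growing polynomially in $L$, and the product over $L$ rows is of order $L^{cL}$ for some $c>0$ rather than $\Con^L$. Your Riemann-sum heuristic correctly identifies that the \emph{exponent} $\sum_i\hq(\delta_i)$ tends to $s\bq(0^+)>\hq(s)$, but it does not control the prefactor, and the lemma requires $\Con^L$ with $\Con=\Con(q,s)$.

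The paper's remedy is to abandon the row-dependent scheme once $L$ exceeds the fixed threshold $L_0=2(n+1)$ and instead use the crude bound \eqref{eq:exdi2} with a single $\delta=\min(\tfrac12,\tfrac{s}{2})$ independent of $L$. Now the constant is genuinely uniform, and although the $\zeta$-exponent $L\delta-s$ is positive (so the integral contributes $\exp(t\bq(\tfrac{s}{2})(L\delta-s))$), this is beaten by $\exp(-tL\hq(\delta))$: since $\bq$ is strictly decreasing and $\delta\le s/2$, one has $L\hq(\delta)-\bq(\tfrac{s}{2})(L\delta-s)=2\hq(\tfrac{s}{2})+L\delta(\bq(\delta)-\bq(\tfrac{s}{2}))\ge 2\hq(\tfrac{s}{2})>\hq(s)$, and the gap $2\hq(\tfrac{s}{2})-\hq(s)$ depends only on $(q,s)$.
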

\begin{proof}
	We split the proof into two steps as follows. Fix $L_0 = 2(n+1)$. In Step 1, we prove the inequality for when $2 \le L \le L_0$ and in Step 2, we consider the case when $L > L_0$. In both steps, we deform the $w$-contours in $I_{\zeta}(\vec{m})$ appropriately to achieve its upper bound.
	
	\medskip
	
	\noindent\textbf{Step 1. $2 \le L \le L_0.$}  Fix $\vec{m} = (m_1, \ldots, m_L) \in \mathfrak{M}(L,n),$  where $\mathfrak{M}(L,n)$ is defined in \eqref{mln} and set
		\begin{equation}\label{eq: delta}
		\rho_i := 
		\begin{cases}
			m_i + \frac{\alpha}{L} - \frac{1}{|\vec{m}|_{>0}} & \text{ if } m_i > 0\\
			\frac{\alpha}{L} & \text{ if } m_i = 0.
		\end{cases}
	\end{equation} 
	where recall that $|\vec{m}|_{>0}=|\{i\mid i\in \Z, m_i>0\}$.

	Recall the definition of $I_{\zeta}(\vec{m})$ in \eqref{def: Im}. Note that each $K_{\zeta,t}^{(m_i)}(w_i,w_j)$ (see \eqref{def: kerdrv}) are themselves are complex integral over $\delta+\i\R$. As $\alpha>0$ and $L\le L_0=2(n+1)$ we may take the $\delta$ appearing in the kernel in $K_{\zeta,t}^{(m_i)}$ is less than all the $\rho_i$'s. Note that this is only possible when $\alpha>0$. This is why we assumed this in the hypothesis here and as well as in the statement of Proposition \ref{p:ho}. 
	  
	  In what follows we show that the contours of $K_{\zeta,t}^{(m_i)}(w_i,w_j)$ followed by $w_i$-contours can be deformed appropriately without crossing any pole in $I_{\zeta}(\vec{m})$. 
Indeed for each $K_{\zeta,t}^{(m_i)}$ in $I_{\zeta}(\vec{m})$ we can write $$K_{\zeta,t}^{(m_i)}(w_i, w_j) =\frac{1}{2\pi \i} \int_{\rho_i - \i \infty}^{\rho_i + \i \infty}\Gamma(-u_i)\Gamma(1+u_i)(u_i)_n \zeta^{u_i-n}e^{f(u_i,w_i)}\frac{\d u_i}{w_j-\tau^{u_i} w_i}.$$ As each $\rho_i \in (0, m_i\vee1)$ (see \eqref{eq: delta}), by Remark \ref{r:pole}, the above equality is true as we do not cross any poles in the integrand. Ensuing this change, we claim that we can deform the $w_i$-contour to $\C(\tau^{1-\frac{\rho_i}{2}})$ one by one without crossing any pole in $I_{\zeta}(\vec{m})$. Similar to the argument given in the beginning of the proof of Proposition \ref{p:leading}, we note that as we deform the $w_i$-contours potential sources of poles in $I_{\zeta}(\vec{m})$ lie in the exponent $f(u_i,w_i):=\frac{(q-p)}{1+w_i\tau^{-1}}-\frac{(q-p)}{1+\tau^{u_i-1}w_i}$ (recalled from \eqref{eq:contour_fn}) and in the denominator $w_j- \tau^{u_i} w_i.$ 

Take $w_i \in \C(\tau^{1-\frac{\delta_i}{2}}),$ $\delta_i, \in [\delta, \rho_i]$, and $u_i\in  \rho_i+\i \R$. Observe that
$$|w_j-\tau^{u_i}w_i| \ge |w_j|-|\tau^{u_i}w_i| \ge \tau^{1-\frac{\delta_j}2}-\tau^{1+\rho_i-\frac{\delta_i}2}>0, $$
$$|1+w_i\tau^{-1}| \ge |w_i\tau^{-1}|-1 \ge \tau^{-\frac{\delta_i}2}-1, \quad |1+\tau^{u_i-1}w_i| \ge 1-|\tau^{u_i-1}w_i| \ge 1-\tau^{\rho_i-\frac{\delta_i}{2}}.$$
This ensures that each $w_i$-contour can be taken as $\C(\tau^{1-\frac{\rho_i}2})$ without crossing any pole.

	Permitting these contour deformations, we wish to apply Lemma \ref{l:kdbd}, \eqref{eq: idbd} specifically. Indeed we apply \eqref{eq: idbd} with $\rho,\delta\mapsto \rho_i$, $w\mapsto w'$, $w' \mapsto w_j$. Note that we indeed have $|w_j| \neq \tau^{1+\frac{\rho_i}2}$ here. We thus obtain
	\begin{equation}\label{eq:k}
	|K^{(m_i)}_{\zeta, t}(w_i,w_j)|\le \Con \zeta^{\rho_i - m_i}\exp(-t\hq(\rho_i)).
	\end{equation}
	Here,  $\Con $ is supposed to be dependent on $m_i,\rho_i,$ and $q$. Note that $\rho_i$ are in turn dependent on $m_i$, $s$ and $L$. Since $L$ is at most $L_0=2(n+1)$, there are at most finitely many choices of $m_i$'s which in turn produced finitely many choices of $\rho_i$'s. As $s$ is fixed, all of the $\rho_i$'s are uniformly bounded away from 0. Hence we can choose the constant $\Con$ to be dependent only $s$ and $q$ (recall that $n$ is also dependent on $s$). 
	
Observe that as $\vec{m}\in \mathfrak{M}(L,n)$ defined in \eqref{mln}, we have $\sum m_i=n$ and consequently $\sum \rho_i=n-1+\alpha=s$. In view of the estimate in \eqref{eq:k} and the definition of $I_{\zeta}(\vec{m})$ from \eqref{def: Im}, by Hadamard's inequality, we obtain  

	\begin{equation*}
	|I_{\zeta}(\vec{m})| \leq \Con^L L^{\frac{L}{2}}\zeta^{s-n}\exp\left(-t\sum_{i =1}^L\hq(\rho_i)\right) = \Con^L L^{\frac{L}{2}}\zeta^{-1 + \alpha}\exp\left(-t\sum_{i =1}^L\hq(\rho_i)\right). 
	\end{equation*}
Thus
	\begin{equation}\label{eq: hoitmd}
	\int_1^{e^{t\bq(\frac{s}{2})}}\zeta^{-\alpha}|I_{\zeta}(\vec{m})|\d\zeta \leq\Con^L L^{\frac{L}{2}}\exp\left(-t\sum_{i =1}^L\hq(\rho_i)\right)\int_1^{e^{t\bq(\frac{s}{2})}} \zeta^{-1}\d \zeta.
	\end{equation}
	Observe that $\int_x^y\zeta^{-1} d\zeta = \log\frac{y}{x}$. We appeal to the subadditivity $\hq(x)  + \hq(y) > \hq(x+y)$ in Proposition \ref{p:htau} to get that $\sum_{i=1}^L \hq(\rho_i) \ge \hq(s-\rho_1)+\hq(\rho_1)$. Note that here we used the fact that $L\ge 2$. This leads to
	\begin{align}\label{eq:to}
		\mbox{r.h.s.~of \eqref{eq: hoitmd}} \le \Con^LL^{\frac{L}2}t\bq(\tfrac{s}2)\exp(-t\hq(s))\exp(-t(\hq(s-\rho_1)+\hq(\rho_1)-\hq(s)))
	\end{align}
	Note that from \eqref{eq: delta}, $\rho_i \ge \frac{\alpha}{L} \ge \frac{\alpha}{L_0}$, this forces $\frac{\alpha}{L_0} \le s-\rho_1,\rho_1 \le s-\frac{\alpha}{L_0}$. Appealing to the strict subadditivity in \eqref{eq:diff} gives us 
that $\hq(s-\rho_1)+\hq(\rho_1)-\hq(s)$ can be lower bounded by a constant $\frac{1}{\Con}>0$ depending only on $s$ and $q$. Adjusting the constant $\Con$ we can absorb $t\bq(\frac{s}{2})$ appearing in r.h.s.~of \eqref{eq:to}, to get \eqref{eq:imcase}, completing our work for this step.

\medskip
	
	\noindent\textbf{ Step 2. $L > L_0$.} Fix $\vec{m}= (m_1, \ldots, m_L)\in \mathfrak{M}(L, n).$  Recall the definition of $I_{\zeta}(\vec{m})$ in \eqref{def: Im}. Note that each $K_{\zeta,t}^{(m_i)}(w_i,w_j)$ (see \eqref{def: kerdrv}) are themselves are complex integral over $\delta+\i\R$.  Here we set $ \delta= \min(\frac{1}{2}, \frac{s}{2})$.  Thanks to \eqref{eq:exdi2} we have
	\begin{equation*}
	|I_{\zeta}(\vec{m})| \leq \Con^L L^{\frac{L}{2}}\zeta^{L\delta -n}\exp(-tL\hq(\delta)),
	\end{equation*}
	where the constant $\Con$ depends only on $n,\delta,$ and $q$ and thus only on $s$ and $q$. This leads to
	\begin{equation}\label{eq: shoitmd}
	\int_1^{e^{t\bq(\frac{s}{2})}}\zeta^{-\alpha}|I_\zeta(\vec{m})|\d\zeta \le\Con^L L^{\frac{L}{2}}\exp(-tL\hq(\delta))\int_1^{e^{t\bq(\frac{s}{2})}} \zeta^{-\alpha-n + L\delta}\d \zeta.
	\end{equation}
	Recall that $s=n-1+\alpha$. As $L\ge 2(n+1)$ and $\delta=\min(\frac12,\frac{s}2)$ we have $L\delta-n-\alpha>0$ in this case. Thus, we can upper bound the integral in \eqref{eq: shoitmd} to get 
	\begin{equation}\label{eq: shoitmd2}
	\mbox{r.h.s.~of \eqref{eq: shoitmd}} \le \Con^L L^{\frac{L}{2}}\exp(-tL\hq(\delta))\frac{\exp(t\bq(\frac{s}{2})(-s + L\delta))}{-s + L\delta}.
	\end{equation}
We incorporate $\frac{1}{-s + L\delta}$ into the constant $\Con$, Recall the definition of $\bq(x)$ from Proposition \eqref{p:htau}. We have $x\bq(x)=\hq(x)$. As $\bq(x)$ is strictly decreasing for $x>0$, (Proposition \ref{p:htau} \ref{a}, \ref{b}) we have
	\begin{equation*}
	\begin{split}
	\mbox{r.h.s.~of \eqref{eq: shoitmd2}} &\le \Con^L L^{\frac{L}{2}}\exp(-2t\hq(\tfrac{s}{2}) -tL\delta(\bq(\delta)-\bq(\tfrac{s}{2})))\\& \leq \Con^L L^{\frac{L}{2}}\exp(- 2t\hq(\tfrac{s}{2})) \le \Con^L L^{\frac{L}{2}}\exp(- t\hq(s)-\tfrac{1}{\Con}t),
	\end{split}
	\end{equation*}
	where the last inequality above follows from \eqref{eq:diff} by observing that by subadditivity we can get a constant $\Con=\Con(q,s)>0$ such that $2\hq(\frac{s}{2})-\hq(s)\ge \frac1{\Con}$. This completes the proof. 
\end{proof}

With Lemma \ref{l:itmdb}, we are now ready to prove Proposition \ref{p:ho}.
\begin{proof}[Proof of  Proposition \ref{p:ho}]
	Recall the definition of  $\mathcal{B}_s(t)$ as defined in (\ref{eq:Calb}). Appealing to \eqref{eq: bst} and Proposition \eqref{p: s-i} we get that
	\begin{align}\label{bsl}
		|\mathcal{B}_{s}(t)| = \sum_{L =2}^{\infty}\frac{1}{L!}\sum_{\vec{m}\in \mathfrak{M}(L, n)}\binom{n}{\vec{m}}\int_1^{e^{t\bq(\frac{s}{2})}}\zeta^{-\alpha}|I_{\zeta}(\vec{m})|\d\zeta
	\end{align}
	 Note that $\binom{n}{\vec{m}}$ is bounded from above by $n!$, and by \eqref{eq:rec} we have $|\mathfrak{M}(L, n)| \leq L^n$. Applying these inequalities along with the estimate in Lemma \ref{l:itmdb} we have that 
	\begin{equation*}
	\begin{split}
	\mbox{r.h.s.~of \eqref{bsl}}  \le \exp(-t\hq(s)-\tfrac{1}{\Con}t)\sum_{L=2}^{\infty} \frac{1}{L!}\Con^L L^{\frac{L}{2}}L^n
	\end{split}
	\end{equation*}
	for some constant $\Con = \Con(q, s)>0.$
	By Stirling's formula, $\sum_{L=2}^{\infty}\frac{1}{L!}\Con^L L^{\frac{L}{2}}L^n$ converges and hence adjusting the constant $\Con$, we obtain \eqref{eq:ho} completing the proof of the proposition. 
\end{proof}

	\appendix

\section{Comparison to TASEP}\label{app} In this section, we compute explicit expression for the upper tail rate function for TASEP (ASEP with $q=1$) with step initial data and show that it matches with general ASEP rate function $\Phi_{+}$ defined in \eqref{eq:ldp}. 

Indeed, the large deviation problem for TASEP is already solved in \cite{joh} and is formulated in terms of Exponential Last Passage Percolation (LPP) model (Theorem 1.6 in \cite{joh}). 

In order to state the connection between TASEP and Exponential LPP, we briefly recall the Exponential LPP model. Let $\Pi_N$ be the set of all upright paths $\pi$ in $\Z_{>0}^2$ from $(1,1)$ to $(N,N)$. Let $w(i,j), (i,j)\in \Z_{>0}^2$ be independent exponential distributed random variables with parameter $1$. The last passage value for $(N,N)$ is defined to be
\begin{align*}
	\calH(N):=\max\big\{\sum_{(i,j)\in \pi} w(i,j); \pi\in \Pi_N\big\}.
\end{align*}
As with the ASEP, for TASEP, we also set $H_0^{q=1}(t)$ to be the number of particles to the right of origin at time $t$. It is well known (see \cite{joh} for example) that $H_0^{q=1}(t)$ is related to the last passage value $\calH(N)$ in the following way
\begin{align}\label{eq:lpp-rel}
	\P\left(-H_0^{q=1}(t)+\tfrac{t}4\ge \tfrac{t}{4}y\right)= \P(\calH(M_t)\ge t), \quad \mbox{where } M_t=\lfloor \tfrac{t}{4}(1-y) \rfloor +1.
\end{align}
\begin{theorem}\label{thm:tldp}
	For $y\in (0,1)$ we have
	\begin{align}\label{eq:tldp}
		\lim_{t\to\infty}\frac1t\log\P\left(-H_0^{q=1}(t)+\tfrac{t}4\ge \tfrac{t}{4}y\right)=-\Phi_{+}(y).
	\end{align}
	where $\Phi_{+}$ is defined in \eqref{eq:ldp}.
\end{theorem}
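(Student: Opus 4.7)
The plan is to combine the exact TASEP--LPP identity \eqref{eq:lpp-rel} with the upper-tail large deviation principle for the exponential last passage percolation established by Johansson, and reduce Theorem \ref{thm:tldp} to an explicit algebraic identity between Johansson's rate function for $\calH(N)$ and $\Phi_{+}(y)$. I would carry this out in three stages.

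First, I would apply \eqref{eq:lpp-rel} to rewrite
\begin{align*}
\P\!\left(-H_0^{q=1}(t)+\tfrac{t}{4}\ge \tfrac{t}{4}y\right)=\P(\calH(M_t)\ge t),\qquad M_t=\lfloor \tfrac{t}{4}(1-y)\rfloor+1,
\end{align*}
so that $t/M_t\to 4/(1-y)$ as $t\to\infty$. For $y\in(0,1)$ the threshold $x:=4/(1-y)$ is strictly greater than $4=\lim_{N}\calH(N)/N$, so $\{\calH(M_t)\ge t\}$ is a genuine upper-tail event. Next, I would invoke Theorem 1.6 of \cite{joh}, which provides
\begin{align*}
\lim_{N\to\infty}\tfrac{1}{N}\log\P(\calH(N)\ge xN)=-I(x),\qquad x>4,
\end{align*}
where Johansson's rate function (after solving the underlying variational problem coming from the Laguerre Unitary Ensemble representation of $\calH(N)$) admits the closed form $I(x)=\sqrt{x(x-4)}-4\log\frac{\sqrt{x}+\sqrt{x-4}}{2}$, equivalently characterized by $I'(x)=\sqrt{(x-4)/x}$ with $I(4)=0$. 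Combining this with $M_t/t\to(1-y)/4$ and absorbing the $O(1)$ rounding in $M_t$ into lower-order terms yields
\begin{align*}
\lim_{t\to\infty}\tfrac{1}{t}\log\P(\calH(M_t)\ge t)=-\tfrac{1-y}{4}\,I\!\left(\tfrac{4}{1-y}\right).
\end{align*}

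Finally, I would verify the algebraic identity $\tfrac{1-y}{4}\,I(\tfrac{4}{1-y})=\Phi_{+}(y)$. With $x=4/(1-y)$, direct manipulation gives $\sqrt{(x-4)/x}=\sqrt{y}$ and, after rationalizing the denominator, $\log\frac{\sqrt{x}+\sqrt{x-4}}{2}=\tfrac{1}{2}\log\frac{1+\sqrt{y}}{1-\sqrt{y}}=\tanh^{-1}(\sqrt{y})$. Therefore
\begin{align*}
\tfrac{1-y}{4}\,I(x)=\tfrac{\sqrt{x(x-4)}}{x}-\tfrac{4}{x}\log\tfrac{\sqrt{x}+\sqrt{x-4}}{2}=\sqrt{y}-(1-y)\tanh^{-1}(\sqrt{y})=\Phi_{+}(y),
\end{align*}
which completes the proof.

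The main obstacle will be the second stage: Johansson's original formulation expresses the rate function as the solution to a constrained variational problem over limiting eigenvalue densities (via the determinantal LUE representation of $\calH(N)$), rather than directly as the closed-form expression above. Extracting that explicit formula therefore requires carrying out the variational calculation (the relevant Euler--Lagrange equation admits a Marchenko--Pastur-type solution whose free energy can be integrated in closed form). A minor technical point is to check that the floor in $M_t$ and the slight mismatch between $\{\calH(M_t)\ge t\}$ and the normalized event $\{\calH(M_t)/M_t\ge t/M_t\}$ contribute only $o(t)$ to the exponent, which follows from continuity of $I$ in $x$ on $(4,\infty)$.
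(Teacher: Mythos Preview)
Your proposal is correct and follows essentially the same route as the paper: use \eqref{eq:lpp-rel} to pass to LPP, apply Johansson's upper-tail LDP \eqref{eq:lldp}, extract the closed form of the rate function, and verify the algebraic identity with $\Phi_{+}$. The paper handles the floor in $M_t$ via the same continuity/sandwiching idea you describe, and carries out the ``main obstacle'' you flag by identifying $\mu_V$ as the Marchenko--Pastur law and computing $J'(x)=\sqrt{(x-4)/x}$ from its Cauchy transform---exactly the derivative you state for $I$ (your $I$ and the paper's $J$ agree, since $(\sqrt{x}+\sqrt{x-4})^2/4=(x-2+\sqrt{x^2-4x})/2$).
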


The idea of the proof of Theorem \ref{thm:tldp} is to use large deviation principle for $\calH(N)$ which appears in Theorem 1.6 in \cite{joh} followed by an application of the relation \eqref{eq:lpp-rel}. The only impediment is that the Johansson result appears in a variational form. 

Let us recall Theorem 1.6 in \cite{joh}. According to Eq (1.21) in \cite{joh} (with $\gamma=1$), the upper tail of $\calH(N)$ satisfy the following large deviation principle
\begin{align}\label{eq:lldp}
	\lim_{N\to\infty}\tfrac1N\log\P(\calH(N)\ge Nz)=-J(z), \quad z\ge 4.
\end{align}	
where the rate function $J$ is given by
\begin{align}\label{eq:gv}
	J(t):=\inf_{x\ge t} [G_V(x)-G_V(4)], \ t\ge 4, \ \ \ G_V(x):=-2\int_{\R} \log|x-r|\d\mu_V(r)+V(r), \  x\ge 4.
\end{align}
Here $V(x)=x$ is defined on $[0,\infty)$, and the measure $\mu_V$ is the unique minimizer of  $I_V(\mu)$ over $\mathcal{M}(\R_{\ge 0})$, the set of probability measures on $[0,\infty)$. $I_V(\cdot)$ is known as the \textit{logarithmic entropy in presence of the external field $V$} and is given by
\begin{align*}
	I_V(\mu):= -\iint_{\R^2} \log|x_1-x_2|\d\mu(x_1)\d\mu(x_2)+\int_{\R} V(x)\d\mu(x), \quad  \mu \in \mathcal{M}(\R_{\ge0}).
\end{align*}
The logarithmic entropy $I_V(\mu)$ is well studied in both mathematical and physics literature and has several applications to random matrix theory and related models. We refer to \cite{safftotik} and \cite{hp} and the references there in for more details.

The form of the rate function defined in \eqref{eq:gv} is not exactly same as in \cite{joh}. However, one can show the rate function $J$ defined in \eqref{eq:gv} is same as Eq (2.15) in \cite{joh} using the properties of minimizing measure (see Theorem 1.3 in \cite{safftotik} or Eq (1.6) in \cite{dragsaff}). Such an expression for the rate function is derived using Coulomb gas theory. We refer to \cite{joh}, \cite{feral}, and \cite{dd21} for treatment on the LDP problems of such nature.

\begin{proof}[Proof of Theorem \ref{thm:tldp}] For clarity we split the proof into two steps. 
	
	\medskip
	
	\noindent\textbf{Step 1.} 	
	We claim that $J$ defined in \eqref{eq:gv} has the following explicit expression.
	\begin{align}\label{eq:jfn}
		J(t)=\sqrt{t^2-4t}-2\log\frac{t-2+\sqrt{t^2-4t}}{2}, \quad t\ge 4.
	\end{align}
	We will prove \eqref{eq:jfn} in Step 2. Here we assume its validity and conclude the proof of \eqref{eq:tldp}. 
	
	Towards this end, fix $y\in (0,1)$ and $K$ large enough such that $[y-\frac1K,y+\frac1K] \subset (0,1)$. Recall the definition of $M_t$ from \eqref{eq:lpp-rel}. Note that for all large enough $t$, we have $\tfrac{4}{1-y+K^{-1}}M_t \le t \le \tfrac{4}{1-y-K^{-1}}M_t$. Thus
	\begin{align*}
		\P\big(\calH(M_t)\ge \tfrac{4}{1-y+K^{-1}}M_t\big) \ge \P\left(-H_0^{q=1}(t)+\tfrac{t}4\ge \tfrac{t}{4}y\right) \ge \P\big(\calH(M_t)\ge \tfrac{4}{1-y-K^{-1}}M_t\big).
	\end{align*}
	Taking logarithms on each side, dividing by $M_t$ and then taking $t\to\infty$ we get
	\begin{equation}\label{eq:tasep-lpp}
		\begin{aligned}
			-J\big(\tfrac{4}{1-y+K^{-1}}\big) & \ge \limsup_{t\to \infty}\frac1{M_t}\P\left(-H_0^{q=1}(t)+\tfrac{t}4\ge \tfrac{t}{4}y\right) \\ & \ge \liminf_{t\to \infty}\frac1{M_t}\P\left(-H_0^{q=1}(t)+\tfrac{t}4\ge \tfrac{t}{4}y\right) \ge -J\big(\tfrac{4}{1-y-K^{-1}}\big).
		\end{aligned}
	\end{equation}
	where we used the upper tail large deviation principle for $\calH(N)$ from \eqref{eq:lldp}.
	Observe that $\frac{M_t}{t} \to \frac{1-y}{4}$, and using \eqref{eq:jfn} we see that
	\begin{align*}
		\frac{1-y}{4}J\big(\frac{4}{1-y}\big)=\frac{1-y}{4}\left(\frac{4\sqrt{y}}{1-y}-2\log\frac{2(1+y)-4\sqrt{y}}{2(1-y)}\right)=\Phi_{+}(y),	
	\end{align*}
	where $\Phi_{+}$ is defined in \eqref{eq:ldp}. Thus taking $K\to \infty$ in \eqref{eq:tasep-lpp} we arrive at \eqref{eq:tldp}.
	
	\medskip
	
	\noindent\textbf{Step 2.} We now turn our attention to prove \eqref{eq:jfn}. 
	It is well known that for $V(x)=x$, the minimizer $\mu_V$ is given by the \textit{Marchenko-Pastur} measure (see Equation 3.3.2 and Proposition 5.3.7 in \cite{hp} with $\lambda=1$):
	\begin{align*}
		\d\mu_V(x) & = \frac{\sqrt{4x-x^2}}{2\pi x}\ind_{x\in[0,4]}\d x.
	\end{align*}
	Recall $G_V(x)$ defined in \eqref{eq:gv}. Using the Cauchy Transform for $\mu_V$ (see the last unnumbered equation in Page 200 of \cite{hp}) we get that for $x > 4$, 
	\begin{align*}
		\frac{\d}{\d x}\int \log|x-r|\d\mu_V(r) = \frac12-\frac{\sqrt{x^2-4x}}{2x},
	\end{align*}
	which implies $G_V(z)-G_V(4)  = \int_4^z \frac{\sqrt{x^2-4x}}{x}\d x.$
	Thus $G_V(z)-G_V(4)$ is strictly increasing in $y$ and whence by \eqref{eq:gv} we have
	\begin{align*}
		J(t)=\int_4^t \frac{\sqrt{x^2-4x}}{x}\d x.
	\end{align*}
	To compute the above integral, we make the change of variable $x \mapsto \frac{(z+1)^2}z$ so that $\d x= (1-\frac1{z^2})\d z$ and $x^2-4x= \frac{(z^2-1)^2}{z^2}$. Set $a=\frac{t-2}{2}+\frac{\sqrt{t^2-4t}}{2}$ to get
	\begin{align*}
		\int_4^t \frac{\sqrt{x^2-4x}}{x}\d x =  \int_1^{a} \frac{(z-1)^2}{z^2}\d z & = \left[z-\frac1z-2\log z\right]_1^{a} = a-\frac1a-2\log a.
	\end{align*}
	Plugging the value of $a$ we get \eqref{eq:jfn} completing the proof.
\end{proof}

	\bibliographystyle{alphaabbr}		
	\bibliography{asep}
\end{document}